\DeclarePairedDelimiter\floor{\lfloor}{\rfloor}
\newtheorem{thm}{Theorem}[section]
\newtheorem{lemma}[thm]{Lemma}
\newtheorem{prop}[thm]{Proposition}
\newtheorem{cor}[thm]{Corollary}
\theoremstyle{remark}
\newtheorem{rem}[thm]{Remark}
\newtheorem{ex}[thm]{Example}
\theoremstyle{definition}
\newtheorem{defn}[thm]{Definition}
\newtheoremstyle{Claim}{}{}{\itshape}{}{\itshape\bfseries}{:}{ }{#1}
\theoremstyle{Claim}
\newcommand{\R}{\mathbb{R}}
\newcommand{\eps}{\varepsilon}
\theoremstyle{plain}
\def\sideremark#1{\ifvmode\leavevmode\fi\vadjust{
\vbox to0pt{\hbox to 0pt{\hskip\hsize\hskip1em
\vbox{\hsize3cm\tiny\raggedright\pretolerance10000
\noindent #1\hfill}\hss}\vbox to8pt{\vfil}\vss}}}
\begin{document}

\title[]{High-order estimates for fully nonlinear equations under weak concavity assumptions}

\author{Alessandro Goffi}
\address{Dipartimento di Matematica ``Tullio Levi-Civita'', Universit\`a degli Studi di Padova, 
via Trieste 63, 35121 Padova (Italy)}
\curraddr{}
\email{alessandro.goffi@unipd.it}

\subjclass[]{}
\keywords{}
 \thanks{
 The author wishes to thank Martino Bardi and Cristina Giannotti for fruitful discussions and for providing many references on fully nonlinear equations, Kevin R. Payne for several pointers to references for Special Lagrangian equations and Xavier Ros-Oton for a comment on the regularity result for fully nonlinear elliptic equations in two variables. He is also grateful to the anonymous reviewer for the invaluable comments and a careful review, which meant a significant improvement of the first version of the manuscript. The author is member of the Gruppo Nazionale per l'Analisi Matematica, la Probabilit\`a e le loro Applicazioni (GNAMPA) of the Istituto Nazionale di Alta Matematica (INdAM). The author was partially supported by the INdAM-GNAMPA Project 2022 ``Propriet\`a quantitative e qualitative per EDP non lineari con termini di gradiente'', the INdAM-GNAMPA Project 2023 ``Problemi variazionali/nonvariazionali: interazione tra metodi integrali e principi del massimo''  and by the King Abdullah University of Science and Technology (KAUST) project CRG2021-4674 ``Mean-Field Games: models, theory and computational aspects".}

\date{\today}

\subjclass[2020]{35B65, 35D40.}
\keywords{Bernstein method, Fully nonlinear equation, Evans-Krylov theorem.}

\date{\today}

\begin{abstract}
This paper studies a priori and regularity estimates of Evans-Krylov type in H\"older spaces for fully nonlinear uniformly elliptic and parabolic equations of second order when the operator fails to be concave or convex in the space of symmetric matrices. In particular, it is assumed that either the level sets are convex or the operator is concave, convex or close to a linear function near infinity. As a byproduct, these results imply polynomial Liouville theorems for entire solutions of elliptic equations and for ancient solutions to parabolic problems.
\end{abstract}

\maketitle


\tableofcontents
\section{Introduction}
In this paper, we study interior regularity properties in $C^{2,\alpha}$ and $C^{1,1}$ spaces for solutions $u$ of fully nonlinear elliptic and parabolic equations of the form
\begin{equation}\label{fullyellintro}
F(D^2u)=0\text{ in }B_1
\end{equation}
and
\begin{equation}\label{fullyparintro}
F(D^2u)-\partial_t u=0\text{ in }Q_1:=B_1\times(-1,0],
\end{equation}
under the assumption that $F:\mathcal{S}_n\to\R$, $\mathcal{S}_n$ being the space of $n\times n$ symmetric matrices, is uniformly elliptic/parabolic, meaning that
\begin{equation}\label{uell}
\lambda\|N\|\leq F(M+N)-F(M)\leq\Lambda\|N\|, \forall N\geq0,
\end{equation}
where $\|N\|=\sup_{|x|=1}|Nx|$ and $\Lambda\geq\lambda>0$.
The main peculiarity of the present article is that $F$ is not assumed concave or convex in $\mathcal{S}_n$. Instead, we assume only convexity of the superlevel (sublevel) sets, which means that $F$ is quasiconcave (resp. quasiconvex), or, alternatively, we impose some asymptotic concavity properties, namely $F=F(M)$ is concave (resp. convex or ``close to a linear function'') when $M$ is large. For parabolic equations \eqref{fullyparintro} in two-space variables, we do not impose any assumption other than \eqref{uell} as in the one-dimensional space setting \cite{Kruzkov}. We also discuss the case in which the ellipticity constants are close to each other, meaning that they satisfy a Cordes-type condition: this avoids to impose any concavity condition on $F$ and allows to consider any space dimension.\\
When $F$ is concave (resp. convex), the Evans-Krylov theorem \cite{Evans82, Krylov82} combined with $C^{1,1}$ estimates give an a priori $C^{2,\alpha}$ estimate in terms of the sup-norm of the unknown function, valid also for viscosity solutions, by the results in \cite{C91,CC}. The main underlying idea behind the $C^{2,\alpha}$ estimates is that any pure second derivative $u_{ee}$ is a subsolution (by the concavity of the operator) of a linear equation in nondivergence form, to which one can apply the Krylov-Safonov weak Harnack inequality. This and the functional relation given by the equation provide a two-side control on the oscillation of $D^2u$, see e.g. the proof of the a priori estimate given by L.C. Evans in \cite{BCESS} and the shorter one provided in \cite{CafSilv}.\\
 The second major chapter in the theory regards Calder\'on-Zygmund estimates proved in the paper \cite{C91}, see also \cite{Escauriaza} for a refinement, which gave a counterpart of the classical maximal $L^q$-regularity for linear elliptic equations. Extensions to the parabolic case of the aforementioned results were studied first in \cite{EvansLenhart}, then in \cite{Wang1,Wang2}.\\
When $F$ is assumed only uniformly elliptic or parabolic, solutions may fail to be smooth and the best regularity known under this sole assumption is the $C^{1,\alpha}$ regularity, cf. \cite{C91} and \cite{Trudinger88}. Roughly speaking, the main idea is that any directional derivative $u_e$ solves a linear uniformly elliptic equation in nondivergence form, and hence $u_e$ (or the first-order incremental quotient) is H\"older continuous by the Krylov-Safonov regularity theory. In this direction, N. Nadirashvili and S. Vl\u{a}du\c{t} \cite{NV,NTV,TeixeiraNotices} exhibited counterexamples in dimension $n\geq 5$ to the smoothness (of class $C^2$) of solutions, see also \cite{CaffarelliStefanelli,SilvestreAnnali} for related counterexamples in the parabolic case. To decide whether all solutions to uniformly elliptic equations in dimensions $n=3$ and $n=4$ are classical remains at this stage an open problem, whilst in dimension $n=2$ solutions of fully nonlinear uniformly elliptic equations are always classical, both in the elliptic \cite{Nirenberg,CaffarelliYuan,FRRO} and in the parabolic case \cite{Andrews,SilvestreAnnali}, see Theorem \ref{2dpar} for another proof of the result for viscosity solutions. We refer to \cite{FRRO} for a complete account on the theory. Other than this progress, we mention that $C^{2,\alpha}$ estimates can be achieved when the equation is close to be linear: this can be done assuming the uniform ellipticity combined with Cordes-Landis type conditions on the ellipticity constants (i.e. when $\frac{\Lambda}{\lambda}\leq 1+c(n)$), see Theorem 6.5 in \cite{SilvestreNotes} and \cite{BW,HuangAIHP,Niu}. This result however is still unknown in the parabolic setting. Some other $C^{2,\alpha}$ estimates have been obtained by compactness arguments in \cite{HuangLio} under the validity of a polynomial Liouville theorem, or when $\|u\|_{L^\infty}$ is small (the so-called case of flat solutions), cf. \cite{Savin} and {\cite{ASS}.\\
Nonetheless, even though in the uniformly elliptic setting solutions may fail to be smooth, some low integrability of $D^2u$ can be expected even without further assumption than \eqref{uell}: this is the object of the Evans-Lin $W^{2,\epsilon}$ estimates \cite{EvansTAMS,Lin}, see also Proposition 7.4 in \cite{CC} or \cite{C91} for a complete account. Moreover, recent works addressed high regularity estimates for semisolutions of elliptic equations under one-side geometric requirements, cf. \cite{BM,BFM}.\\
 
A significant problem in the theory is thus to determine some structural conditions on $F$ other than the uniform ellipticity (in between concavity or convexity and no hypotheses) that guarantee higher order $C^{2,\alpha}$ and $W^{2,p}$ estimates, cf. p. 574 in \cite{CCjmpa}. Important results in this direction can be found in \cite{CaffarelliYuan,CCjmpa,YuanAIHP,Collins16,Pingali} for fully nonlinear uniformly elliptic equations having a special structure in the context of $C^{2,\alpha}$ estimates. See also \cite{KrylovCCM,KrylovCPDE,DongKrylov} for Calder\'on-Zygmund estimates under relaxed convexity conditions. Recent works have been devoted to obtain high-order estimates in $C^{1,\alpha}$ and $W^{2,p}$ through the so-called recession operator, where $F$ is assumed to be concave or convex only at the ends of $\mathcal{S}_n$. This is done for instance in \cite{SilvestreTeixeira,PimentelTeixeira,Lee,LeeOk} and the references therein. We also refer to the works by N.V. Krylov et al, cf. \cite{KrylovBookNew}, for related assumptions and more references.\\
Our main results are inspired from an observation made by B. Andrews in \cite{Andrews}, who pointed out that the convexity of the level sets of $F$ as a function of $M$ is sufficient to derive a priori Evans-Krylov estimates for fully nonlinear equations (see also p. 575 of \cite{CCjmpa}). This slightly weakens the requirement of the Evans-Krylov theorem. In particular, in Theorem 6 of \cite{Andrews} an Evans-Krylov estimate was proved for fully nonlinear uniformly parabolic equations under the assumption that $F\in C^2$ and
\[
F_{ij,kl}M_{ij}M_{kl}\leq0\text{ for all }M\in\mathcal{S}_n\text{ satisfying }F_{ij}M_{ij}=0, F_{ij}=\frac{\partial F(M)}{\partial m_{ij}}, F_{ij,kl}(M)=\frac{\partial^2 F(M)}{\partial m_{kl}\partial m_{ij}}.
\]
Here, starting from this remark, we prove an a priori Evans-Krylov estimate for fully nonlinear uniformly elliptic equations that are quasiconcave (resp. quasiconvex) in $D^2u$ by means of an increasing transformation that makes the operator concave (resp. convex). More precisely, if $u$ solves \eqref{fullyellintro}, then we find a function $\psi:\R\to\R$ increasing with $\psi(0)=0$ such that $u$ solves
\[
G(D^2u)=\psi(F(D^2u))=0,
\]
where $G$ is concave and uniformly elliptic with ellipticity constants depending on $\|D^2u\|_{C(B_1)}$. This implies the $C^{1,1}$ to $C^{2,\alpha}$ estimates in the stationary case by the aforementioned results due to Evans and Krylov. Then, following Section 9 in \cite{CC}, by the Bernstein method we prove a priori $C^{1,1}$ estimates providing a control on the size of the $C^{2,\alpha}$ norm in terms of the sup-norm of the unknown, i.e.
\[
\|u\|_{C^{2,\alpha}(B_\frac12)}\leq C(n,\lambda,\Lambda,\|u\|_{C(B_1)},|F(0)|).
\]
This is done in Section \ref{sec;aprioriell} and complements the results of \cite{Andrews} in the elliptic setting.\\
The parabolic counterpart of the above a priori $C^{2,\alpha}$ estimate with respect to the parabolic distance for solutions to \eqref{fullyparintro} is more delicate, see Section \ref{sec;parapriori}. Indeed, it is worth remarking, as already done in \cite{Andrews}, that the previous transformation does not allow to convert the quasiconcave parabolic equation into a concave fully nonlinear parabolic PDE. Indeed, \eqref{fullyparintro} is not the same as the equation
\[
\psi(F(D^2u))-\partial_tu=0.
\]
To do this, we exploit again an idea of B. Andrews \cite{Andrews}, see also the earlier paper \cite{GutierrezHuang} and the more recent \cite{SilvestreAnnali}, and prove the estimate by regarding \eqref{fullyparintro} as an elliptic problem with a H\"older right-hand side. This step can be performed once one knows that $\partial_t u$ is space-time H\"older continuous. Then, an interpolation argument and the elliptic result provide the full a priori estimate of the form
\[
\|\partial_t u\|_{C^{\alpha,\alpha/2}(Q_\frac12)}+\|D^2u\|_{C^{\alpha,\alpha/2}(Q_\frac12)}\leq C(n,\lambda,\Lambda,\|u\|_{C(Q_1)},|F(0)|).
\]
We also prove regularity estimates for viscosity solutions in Sections \ref{sec;regell} and \ref{sec;regpar}. To do this, in the elliptic case we follow the classical route by \cite{CC} and prove that for quasiconcave operators the convexity of superlevel sets is enough to conclude that the second order differential quotients are still subsolutions (in the viscosity sense) of a Pucci's extremal equation. In the parabolic case, we again regard the equation as elliptic and exploit the previous stationary result. Then, one is only left to prove the time-H\"older continuity of second derivatives. The latter follows either adapting the two-dimensional arguments from \cite{Andrews,SilvestreAnnali} or via an interpolation argument. For equations in two-space variables and for those satisfying Cordes conditions, such an approach provides space-time $C^{2,\alpha}$ estimates without any additional condition on $F$ other than \eqref{uell}, cf. Section \ref{sec;2dpar} and Section \ref{sec;Cordes}.\\

We conclude the paper with $C^{2,1}$ estimates for fully nonlinear parabolic models that are concave/convex or close to a hyperplane at infinity. We mention that a priori $C^{2,1}$ estimates have been partially investigated in \cite{Kovats} in the concave case, where a one-side second derivative estimate is proved. Here, we address this issue by the (parabolic) Bernstein method, following \cite{Landis}. This analysis extends a work by L. Caffarelli and Y. Yuan \cite{CaffarelliYuan}, and also completes the result in the more classical concave setting. \\
Our results are also complemented with the derivation of $C^{2,\alpha}$ and $W^{2,p}$ estimates for equations with right-hand sides depending on $x$ using a perturbation argument due to L. Caffarelli \cite{C91,CC}. Here the idea is to import the regularity properties from those valid for the corresponding homogeneous equation with constant coefficients. \\
We also discuss some polynomial Liouville theorems for entire solutions of elliptic equations and ancient solutions of parabolic equations, as it is done for the Laplace and the Heat equation, cf. \cite{ColdingMinicozzi,FRRO}. We believe it would be worth investigating such issues in the context of Riemannian manifolds, where curvature conditions play a crucial role.
\par\bigskip
\paragraph{\textit{Notation}} 
We denote by $B_r(x)$ the ball of center $x$ and radius $r$. When $x_0=0$ we simply write $B_r$.\\
$Q_r(x,t)$ is the parabolic cylinder $B_r(x)\times(t-r^2,t)$, and we write $Q_r$ when $(x,t)=(0,0)$.\\
We denote, given $\Omega$ an open set, by $\partial_{\mathrm{par}}(\Omega\times(a,b))$ the parabolic boundary of a set.\\
We denote by $|u|_{0;\Omega}$ the sup-norm of $u$, i.e. $\|u\|_{C(\Omega)}$ (both in the elliptic and the parabolic case).\\
$u_e$ and $u_{ee}$ will denote the first- and second derivatives with respect to a unitary direction $e\in \R^n$.\\
$\mathrm{osc}_{\Omega}u$ denotes the oscillation of $u$ on $\Omega$\\
Let $Q\subset\Omega\times(0,T)$ and $\alpha\in(0,1)$. We denote by $d((x,t),(y,s))=|x-y|+|t-s|^\frac12$ the parabolic distance and
\begin{itemize}
\item $C^{\alpha,\alpha/2}(Q)$, $\alpha\in(0,1]$ the space of those $u:Q\to\R$ such that
\[
\|u\|_{C^{\alpha,\alpha/2}(Q)}:=|u|_{0;Q}+[u]_{C^{\alpha,\alpha/2}(Q)}=\|u\|_{L^\infty(Q)}+\sup_{\substack{(x,t),(y,s)\in Q,\\ (x,t)\neq (y,s)}}\frac{|u(x,t)-u(y,s)|}{d^\alpha((x,t),(y,s))};
\]
\item $C^{1+\alpha,(1+\alpha)/2}(Q)$ the space of those $u$ whose spatial gradient exists classically and endowed with the norm
\[
\|u\|_{C^{1+\alpha,(1+\alpha)/2}(Q)}:=|u|_{0;Q}+|Du|_{0;Q}+\sup_{\substack{(x,t),(y,s)\in Q,\\ (x,t)\neq (y,s)}}\frac{|u(x,t)-u(y,s)|}{d^{1+\alpha}((x,t),(y,s))}.
\]
In particular, any  $u\in C^{1+\alpha,(1+\alpha)/2}(Q)$ is such that each component of $Du$ belongs to $C^{\alpha,\alpha/2}(Q)$ and $u$ is H\"older continuous with exponent $(1+\alpha)/2$ in the time variable;
\item $C^{2+\alpha,1+\alpha/2}(Q)$ the space of functions $u$ such that
\[
\|u\|_{C^{2+\alpha,1+\alpha/2}(Q)}:=|u|_{0;Q}+\sum_{i=1}^n\|u_{x_i}\|_{C^{1+\alpha,(1+\alpha)/2}(Q)}+\|\partial_t u\|_{C^{\alpha,\alpha/2}(Q)}
\]
This can also be expressed by saying that $D^2u$ belongs to $C^{\alpha,\alpha/2}(Q)$ and $\partial_t u$ belongs to $C^{\alpha,\alpha/2}(Q)$, and can be obtained by means of Remark 8.8.7 in \cite{KrylovBookHolder}. In this case we can consider the space $C^{2+\alpha,1+\alpha/2}(Q)$ equipped with the norm
\[
\|u\|_{C^{2+\alpha,1+\alpha/2}(Q)}:=|u|_{0;Q}+|Du|_{0;Q}+\|D^2u\|_{C^{\alpha,\alpha/2}(Q)}+\|\partial_t u\|_{C^{\alpha,\alpha/2}(Q)}.
\]
 For more properties on these spaces we refer to \cite{KrylovBookHolder}. Furthermore, we denote by
\[
\|u\|_{C^{2,1}(Q)}:=\sum_{2i+j\leq 2}|\partial_t^iD_x^j u|_{0;Q}.
\]
\end{itemize}
Throughout this paper the ellipticity constants $\lambda,\Lambda$ are fixed, and a constant will be called universal when depends on $\lambda,\Lambda$ and the dimension $n$. $C$ will denote a generic positive constant depending on the data, that may differ in each inequality.\\
For $F=F(M)$, $M\in\mathcal{S}_n$, we denote by $F_{ij}(M)=\frac{\partial F(M)}{\partial m_{ij}}$ and $F_{ij,kl}(M)=\frac{\partial^2 F(M)}{\partial m_{kl}\partial m_{ij}}$.\\
When \eqref{uell} holds for the time-dependent problem \eqref{fullyparintro} we call $F$ uniformly parabolic.\\
From now on, the summation convention over repeating indices is implicitly understood.

\section{Preliminary definitions}
\subsection{Concavity and related notions}
We collect some tools from convex analysis, referring for more details to Chapter 3 of \cite{Boyd}. We recall that a function $u:\overline{\Omega}\to\R$ is called quasiconcave if its domain and all superlevel sets \[S^\alpha(u)=\{x:u(x)\geq \alpha\},\ \alpha\in\R\text{ are convex.}\] In particular, a function is quasiconcave if and only if its domain is convex and the following Jensen's inequality holds
\[
u(\theta x+(1-\theta)y)\geq \min\{u(x),u(y)\}\ ,\forall x,y\in\overline{\Omega},\theta\in[0,1].
\]
Similarly, a function is quasiconvex if $-u$ is quasiconcave, i.e. the sublevel sets are convex. For a quasiconvex $u$ it holds
\[
u(\theta x+(1-\theta)y)\leq \max\{u(x),u(y)\}\ ,\forall x,y\in\overline{\Omega},\theta\in[0,1].
\]
Recall that the superlevel sets of a concave function are convex, but the converse does not hold: for instance, the function $e^{x}$ is not concave on the real line (it is strictly convex), but the superlevel sets are convex. In particular, a concave function is also quasiconcave, but the converse does not hold. In fact, it can be proved that the function $u(x_1,x_2)=x_1x_2$ on $\R^2_+$ is neither concave nor convex, but it is quasiconcave on its domain. \\
If $u:\R^n\to\R$ is differentiable, then $u$ is quasiconcave if and only if the domain of $u$ is convex and for all $x,y\in \overline{\Omega}$ we have
\[
u(y)\geq u(x)\implies Du(x)\cdot (y-x)\geq0.
\]
Moreover, when $u:\overline{\Omega}\to\R$ is twice differentiable, if $u$ is quasiconcave then for all $x\in\overline{\Omega}$ and all $y\in\R^n$ one has
\begin{equation}\label{soc}
y\cdot Du=0\implies D^2uy\cdot y\leq0.
\end{equation}
This implies that when $Du=0$ we have $D^2u\leq0$, while in the case $Du\neq0$ the above condition implies that $D^2u$ is negative semidefinite on the orthogonal complement $(Du)^\perp$ (having dimension $n-1$). The latter entails that $D^2u$ can have at most one positive eigenvalue.\\
We have the following second-order characterization of quasiconcave functions:
\begin{lemma}\label{lemmasoc}
A twice differentiable function $u:\overline{\Omega}\to\R$ is quasiconcave if and only if there exists $\sigma\in\R$ such that
\[
D^2u+\sigma Du\otimes Du\leq0,
\]
where $(x\otimes y)_{ij}=x_iy_j$, $x,y\in\R^n$.
\end{lemma}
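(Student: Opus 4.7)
I would prove the two directions separately.

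For the sufficiency ($\Leftarrow$), the strategy is to turn $u$ into a concave function by a smooth, strictly increasing change of variable. Set
\[
\phi(t) := \begin{cases} (e^{\sigma t}-1)/\sigma & \text{if } \sigma \neq 0,\\ t & \text{if } \sigma = 0,\end{cases}
\]
so that $\phi'(t) > 0$ and $\phi''(t)/\phi'(t) = \sigma$ identically. Applying the chain rule to $v := \phi(u)$ yields
\[
D^2 v = \phi''(u)\, Du \otimes Du + \phi'(u)\, D^2 u = \phi'(u)\bigl( D^2 u + \sigma\, Du \otimes Du\bigr) \leq 0,
\]
since $\phi'(u) > 0$ and the bracketed matrix is $\leq 0$ by hypothesis. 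Hence $v$ is concave on $\overline{\Omega}$, hence quasiconcave; since $\phi$ is strictly increasing, the superlevel sets satisfy $\{u \geq \alpha\} = \{v \geq \phi(\alpha)\}$ and are therefore convex, proving that $u$ is quasiconcave.

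For the necessity ($\Rightarrow$), I would use the pointwise second-order condition \eqref{soc}. At points where $Du(x)=0$, condition \eqref{soc} (applied to any $y$, since $y\cdot Du(x)=0$ is automatic) gives $D^2 u(x)\leq 0$, and any $\sigma$ works. At points where $Du(x)\neq 0$, decomposing $\mathbb{R}^n = \mathbb{R}\hat{v} \oplus \hat{v}^{\perp}$ with $\hat{v} := Du(x)/|Du(x)|$ produces the block representation
\[
D^2 u(x) = \begin{pmatrix} a & b^{\top} \\ b & C \end{pmatrix}, \qquad Du(x)\otimes Du(x) = |Du(x)|^2 \begin{pmatrix} 1 & 0 \\ 0 & 0 \end{pmatrix},
\]
in which \eqref{soc} reads $C \leq 0$. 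A Schur-complement computation then reduces the desired inequality $D^2u(x)+\sigma\, Du(x)\otimes Du(x)\leq 0$ to the requirements (i) $C\leq 0$, (ii) the range compatibility $b\in\operatorname{range}(C)$, and (iii) $a+\sigma|Du(x)|^2 - b^{\top}C^{+}b\leq 0$; once (ii) is known, (iii) is achieved by taking $\sigma$ sufficiently negative.

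The main obstacle is establishing (ii) in the degenerate case when $C$ has a nontrivial kernel, since \eqref{soc} alone is insufficient and one must exploit the \emph{global} convexity of the superlevel sets. Concretely, if $z \in \ker C \cap \hat{v}^{\perp}$ and $\hat{v}^{\top} D^2 u(x)\, z \neq 0$, then a third-order expansion of $u$ along the affine line $x+tz$ and a neighboring curve $x+tz+s\hat{v}$ shows that the level set $\{u=u(x)\}$ locally penetrates transversally into the superlevel set $\{u\geq u(x)\}$, contradicting the convexity of the latter. Hence quasiconcavity forces $b\perp\ker C$, so $b\in\operatorname{range}(C)$, and any sufficiently negative $\sigma$ produces the matrix inequality.
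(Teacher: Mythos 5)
The paper states this lemma without proof (it is quoted from Chapter 3 of \cite{Boyd}), so there is no argument of the author's to compare yours against; I can only assess the proposal on its own terms. Your sufficiency direction is correct and is the standard one: composing with $\phi(t)=(e^{\sigma t}-1)/\sigma$ turns the hypothesis into $D^2(\phi\circ u)\leq 0$, and strict monotonicity of $\phi$ carries convexity of the superlevel sets of $\phi\circ u$ back to those of $u$. This is precisely the exponential change of unknown that the paper itself exploits later in Theorem \ref{EvansKrylov}.

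The necessity direction has a genuine gap, and it is not the range condition (ii) you single out but the uniformity of $\sigma$. Your Schur-complement step produces, at each fixed $x$ with $Du(x)\neq 0$, a threshold $\sigma(x)\leq -(a-b^{\top}C^{+}b)/|Du(x)|^{2}$ below which the matrix inequality holds \emph{at that point}; nothing in the argument bounds this threshold from below over $\overline{\Omega}$, and in general nothing can. The function $u(x)=x^{3}$ on an interval containing $0$ is quasiconcave (every superlevel set is a half-line), yet $u''+\sigma (u')^{2}=6x+9\sigma x^{4}>0$ for all small $x>0$ no matter how negative $\sigma$ is; the paper's own example $u(x_1,x_2)=x_1x_2$ on $\R^2_+$ fails the same way near the boundary of the orthant, where the inequality forces $\sigma\leq -1/(2x_1x_2)$. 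So the ``only if'' half, read with a single $\sigma$ as the statement (and its subsequent use in Lemma \ref{condsigma} and Theorem \ref{EvansKrylov}) requires, cannot be proved without additional hypotheses (e.g.\ working on a compact set where $Du\neq 0$ and the tangential Hessian is negative definite); your argument cannot repair this because the obstruction is global, not local. Two smaller points: the third-order Taylor expansion you invoke to establish $b\perp\ker C$ is not available under the stated $C^{2}$ hypothesis, and in any case that step is asserted rather than carried out --- it is exactly the degenerate case that makes even the pointwise necessity delicate, whereas the nondegenerate part of your necessity argument only reproves \eqref{soc}.
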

Given a function $F\in \mathcal{S}_n\to\R$ with $F=F(M)$, this can be extended to a function defined on the space of real $n\times n$ matrices through the extension $F(M)=F(\frac12(M+M^T))$. Then $F$ is a real-valued function of the $n\times n$ variables $m_{ij}$. Moreover, $DF(M)\cdot N=F_{ij}(M)N_{ij}$, $M,N\in\mathcal{S}_n$. \\
For a uniformly elliptic functional $F=F(M)$ satisfying \eqref{uell} (not necessarily of class $C^1$) we say that it is quasiconcave (resp. quasiconvex) if it is a quasiconcave (quasiconvex) function of $M\in\mathcal{S}_n$, namely for all $M_1,M_2\in\mathcal{S}_n$ and $\theta\in[0,1]$ we have
\[
F((1-\theta)M_1+\theta M_2)\geq \min\left\{F(M_1),F(M_2)\right\}\ (\leq \max\left\{F(M_1),F(M_2)\right\}).
\]
The second-order characterization \eqref{soc} then reads as follows when $F\in C^2$:
\[
F_{ij,kl}M_{ij}M_{kl}\leq0\text{ for all }M\in\mathcal{S}_n\text{ satisfying }F_{ij}M_{ij}=0.
\]
We point out that the latter condition is the one imposed in Theorem 6 of \cite{Andrews} to prove Evans-Krylov estimates.
Lemma \ref{lemmasoc} then implies the following result when specified to functions over symmetric matrices:
\begin{lemma}\label{condsigma}
A twice differentiable function $F:\mathcal{S}_n\to\R$, $F=F(M)$, is quasiconcave in the space of symmetric matrices if and only if it is quasiconcave as a function of $n\times n$ variables. Moreover, it is quasiconcave if and only if there exists $\sigma\in\R$ such that for all $M\in\mathcal{S}_n$
\[
F_{ij,kl}M_{ij}M_{kl}+\sigma F_{ij}F_{kl}M_{ij}M_{kl}\leq0.
\]
\end{lemma}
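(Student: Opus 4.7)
The plan is to reduce both statements of Lemma \ref{condsigma} to Lemma \ref{lemmasoc} applied to $F$ on the finite-dimensional vector space $\mathcal{S}_n$ (equipped with the Frobenius inner product $\langle A,B\rangle=A_{ij}B_{ij}$) and to its extension on $\mathbb{R}^{n\times n}$.

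For the first assertion I would observe that the extension $\widetilde F(M):=F(\tfrac12(M+M^T))$ is the composition $F\circ L$, where $L:\mathbb{R}^{n\times n}\to\mathcal{S}_n$, $L(M)=\tfrac12(M+M^T)$, is a linear surjection. Since preimages (and images) of convex sets under linear maps are convex, one has
\[
\{M\in\mathbb{R}^{n\times n}:\widetilde F(M)\geq\alpha\}=L^{-1}\bigl(\{N\in\mathcal{S}_n:F(N)\geq\alpha\}\bigr),
\]
so the superlevel sets of $\widetilde F$ are convex if and only if those of $F$ are, giving the claimed equivalence between quasiconcavity of $F$ on $\mathcal{S}_n$ and quasiconcavity as a function of the full $n\times n$ variables.

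For the second assertion I would apply Lemma \ref{lemmasoc} directly to $F$ regarded as a $C^2$ function on the finite-dimensional real vector space $\mathcal{S}_n$. With respect to the Frobenius inner product, the gradient of $F$ at $M$ is represented by the symmetric matrix with entries $F_{ij}(M)$, while the Hessian acts as the symmetric bilinear form $(N,N)\mapsto F_{ij,kl}(M)N_{ij}N_{kl}$. Lemma \ref{lemmasoc} then asserts that quasiconcavity is equivalent to the existence of $\sigma\in\mathbb{R}$ with $D^2F+\sigma\,DF\otimes DF\leq 0$ as a quadratic form, that is, for every test matrix $N\in\mathcal{S}_n$,
\[
F_{ij,kl}(M)N_{ij}N_{kl}+\sigma\, F_{ij}(M)F_{kl}(M)N_{ij}N_{kl}\leq 0,
\]
which is precisely the inequality in the statement (where the test direction is written $M$ by a mild abuse of notation).

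The only technical point is consistency between the two viewpoints: the coefficients $F_{ij}$, $F_{ij,kl}$ have been introduced as (symmetrized) partial derivatives of the extension on $\mathbb{R}^{n\times n}$, and one should check that when the quadratic forms $D^2\widetilde F$ and $D\widetilde F\otimes D\widetilde F$ are evaluated on directions $N\in\mathcal{S}_n$ they coincide with the intrinsic gradient and Hessian of $F$ on $\mathcal{S}_n$. This is a routine symmetrization bookkeeping and constitutes the main care that has to be taken, but it does not affect the translation from Lemma \ref{lemmasoc}; after this identification the result is immediate.
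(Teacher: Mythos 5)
Your proposal is correct and follows essentially the same route as the paper, which states Lemma \ref{condsigma} as a direct specialization of Lemma \ref{lemmasoc} to functions on $\mathcal{S}_n$ without further argument; you have simply filled in the details (the linear-preimage argument for the equivalence with the extension to $\R^{n\times n}$, and the identification of the intrinsic gradient and Hessian on $\mathcal{S}_n$ with the symmetrized coefficients $F_{ij}$, $F_{ij,kl}$).
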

\begin{ex}
Consider the operator
\[
F(M):=\sum_{i=1}^n\arctan\lambda_i(M)
\]
and the fully nonlinear equation
\begin{equation}\label{sle}
\sum_{i=1}^n\arctan\lambda_i(D^2u)=f(x)
\end{equation}
with $|f(x)|\geq \frac{\pi}{2}(n-2)$. This is known as Special Lagrangian equation with supercritical phase. It was proved in Lemma 8.1 of \cite{CNSacta}, see also Lemma 2.1 in \cite{YuanPAMS},\cite{HL} or Lemma 2.1 in \cite{Collins} that if $F$ is seen as a function of the eigenvalues, i.e. $F=f(\lambda)=f(\lambda_1,...,\lambda_n)$, then
\[
\{\lambda\in\R^n:\sum_i\arctan\lambda_i\geq \frac{\pi}{2}(n-2)\}
\]
is a convex set. Therefore, the set
\[
\Sigma:=\{M\in\mathcal{S}_n: F(M)\geq \alpha\}, \alpha\geq \frac{\pi}{2}(n-2)
\]
is convex, cf. Corollary 2.3 in \cite{Collins}, which implies that the operator $F$ in \eqref{sle} is quasiconcave provided that $|f(x)|\geq \frac{\pi}{2}(n-2)$.
\end{ex}

\subsection{Viscosity solutions of fully nonlinear equations}
We recall the following notion of viscosity solution:
\begin{defn}
A continuous function $u$ in $\Omega$ is a viscosity subsolution (resp. supersolution) of \eqref{fullyellintro} in $\Omega$ when the following conditions are satisfied:\\
If $x_0\in\Omega$, $\varphi\in C^2(\Omega)$ and $u-\varphi$ has a local maximum at $x_0$, then
\[
F(D^2\varphi(x_0))\geq 0
\]
(resp. $u-\varphi$ has a local minimum at $x_0$, then $F(D^2\varphi(x_0))\leq 0$). $u$ is a viscosity solution if it is both a subsolution and a supersolution in the viscosity sense.
\end{defn}
By Proposition 2.4 in \cite{CC}, it is equivalent to replace $\varphi$ with a paraboloid. Similar definitions hold for the parabolic equation \eqref{fullyparintro}, cf. \cite{Wang1,Wang2,ImbertSilvestre}.\\
We now introduce two classes of solutions that are important to prove $C^{1,1}$ estimates.
\begin{defn}
Let $F$ be a continuous function in $\Omega$ and $\lambda\leq\Lambda$ two positive constants. We denote by $\underline{\mathcal{S}}(\lambda,\Lambda,f)$ the space of continuous functions $u$ in $\Omega$ that are subsolutions of the Pucci's maximal equation
\[
\mathcal{M}^+_{\lambda,\Lambda}(D^2u)= f(x)\text{ in the viscosity sense in $\Omega$},
\]
where
\[
\mathcal{M}^+_{\lambda,\Lambda}(M)=\sup_{\lambda I_n\leq A\leq \Lambda I_n}\mathrm{Tr}(AM)
\]
is the Pucci's maximal operator. Similarly, $\overline{\mathcal{S}}(\lambda,\Lambda,f)$ the space of continuous functions $u$ in $\Omega$ that are supersolutions to the Pucci's minimal equation
\[
\mathcal{M}^-_{\lambda,\Lambda}(D^2u)= f(x)\text{ in the viscosity sense in $\Omega$},
\]
where
\[
\mathcal{M}^-_{\lambda,\Lambda}(M)=\inf_{\lambda I_n\leq A\leq \Lambda I_n}\mathrm{Tr}(AM)
\]
is the Pucci's minimal operator.
\end{defn}
Similar definitions continue to hold for parabolic equations, cf. \cite{Wang1,Wang2}.
\section{A priori estimates for quasiconcave elliptic equations}\label{sec;aprioriell}
\subsection{From $C^{1,1}$ to $C^{2,\alpha}$ estimates: the case of smooth operators (and smooth solutions)}
In this section we prove an a priori $C^{2,\alpha}$ estimate depending on the $C^{1,1}$ norm of the unknown function for classical solutions to the Hessian equations $F(D^2u)=0$ under the assumption that $F\in C^2$ and it is a quasiconcave function in the space of symmetric matrices. This exploits a suitable transformation that allows to inherit this level of regularity from the Evans-Krylov theorem for concave functionals. This was already observed by B. Andrews in \cite{Andrews} for the case of quasilinear functions $F$, i.e. when $F$ is both quasiconcave and quasiconvex, meaning that its level sets (and not only the superlevel sets) are convex. A similar increasing transformation has been recently used even in the setting of Special Lagrangian equations with supercritical phases, cf. Corollary 2.3 in \cite{Collins}.
\begin{thm}\label{EvansKrylov}
Let $F\in C^2$ be quasiconcave and uniformly elliptic, i.e.
\[
\lambda|\xi|^2\leq F_{ij}\xi_i\xi_j\leq\Lambda|\xi|^2,
\]
with $F(0)=0$. Let
\[
u\in C^2(B_1)\text{ satisfy }F(D^2u)=0\text{ in }B_1.
\]
Then we have the a priori estimate
\[
\|u\|_{C^{2,\alpha}(\overline{B}_\frac12)}\leq C,
\]
where $\alpha\in(0,1)$ and $C$ are universal constants, i.e. depend on $n,\lambda,\Lambda$, but $C$ depends also on $\|u\|_{C^{1,1}(\overline{B}_\frac34)}$.
\end{thm}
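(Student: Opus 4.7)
The plan is to reduce to the classical Evans--Krylov theorem by postcomposing $F$ with an increasing function $\psi\colon\R\to\R$ with $\psi(0)=0$, chosen so that $G:=\psi\circ F$ is concave on $\mathcal{S}_n$ and still uniformly elliptic on the range of $D^2u$. Since $\psi$ is strictly monotone with $\psi(0)=0$, the equations $F(D^2u)=0$ and $G(D^2u)=0$ are equivalent, and one concludes by the Evans--Krylov a priori $C^{2,\alpha}$ estimate applied to the concave operator $G$.

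By Lemma \ref{condsigma} the quasiconcavity of $F\in C^2$ produces a constant $\sigma\in\R$ such that, at every $M\in\mathcal{S}_n$ and every direction $N\in\mathcal{S}_n$,
\[
F_{ij,kl}(M)N_{ij}N_{kl}+\sigma\,(F_{ij}(M)N_{ij})^2\leq 0.
\]
Choose $\psi\in C^2(\R)$ strictly increasing with $\psi(0)=0$ and $\psi''(t)/\psi'(t)\leq\sigma$ for all $t$; explicitly, $\psi(t)=t$ works if $\sigma\geq 0$ and $\psi(t)=(e^{\sigma t}-1)/\sigma$ works if $\sigma<0$. A direct computation then gives
\[
G_{ij,kl}(M)N_{ij}N_{kl}=\psi'(F(M))\Big[F_{ij,kl}(M)N_{ij}N_{kl}+\tfrac{\psi''(F(M))}{\psi'(F(M))}(F_{ij}(M)N_{ij})^2\Big],
\]
which is $\leq 0$ by combining the $\sigma$-inequality with $\psi'>0$, $(F_{ij}N_{ij})^2\geq 0$, and $\psi''/\psi'\leq\sigma$. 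Hence $G$ is concave on $\mathcal{S}_n$.

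For the ellipticity of $G$, note that $G_{ij}(M)\xi_i\xi_j=\psi'(F(M))F_{ij}(M)\xi_i\xi_j$. Writing $R:=\|u\|_{C^{1,1}(\overline{B}_{3/4})}$, we have $\|D^2u(x)\|\leq R$ on $\overline{B}_{3/4}$. The uniform ellipticity of $F$ with $F(0)=0$ forces $|F(D^2u(x))|\leq \Lambda R$, and the positivity and continuity of $\psi'$ then pinch $\psi'(F(D^2u(x)))$ between two positive constants $c_1\leq c_2$ depending only on $\sigma$, $\Lambda$, $R$. Thus $G$ is uniformly elliptic on the range of $D^2u$ with constants $\tilde\lambda=\lambda c_1$ and $\tilde\Lambda=\Lambda c_2$. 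Applying the Evans--Krylov theorem in its a priori form for classical solutions (as in \cite{CC}) to the concave uniformly elliptic equation $G(D^2u)=0$ on $\overline{B}_{3/4}$ yields
\[
\|u\|_{C^{2,\alpha}(\overline{B}_{1/2})}\leq C(n,\tilde\lambda,\tilde\Lambda)\|u\|_{L^\infty(\overline{B}_{3/4})},
\]
whose right-hand side is controlled by $n$, $\lambda$, $\Lambda$, and $\|u\|_{C^{1,1}(\overline{B}_{3/4})}$.

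The subtle point I expect to be the main obstacle is that $G$ is globally concave but only uniformly elliptic on the image of $D^2u$, whereas the usual formulation of Evans--Krylov presumes global uniform ellipticity. Two remedies are available: either one observes that Evans' proof proceeds pointwise at the matrices actually reached by $D^2u$, so only concavity and ellipticity on that compact set intervene, or one smoothly extends $G$ outside a ball $\{\|M\|\leq 2R\}\subset\mathcal{S}_n$ to a globally concave, uniformly elliptic operator (for instance by gluing to a linear function of $M$) and applies the standard theorem to the extension. In either case the resulting constants depend only on $n$, $\lambda$, $\Lambda$, $\sigma$ (hence on $F$), and $\|u\|_{C^{1,1}(\overline{B}_{3/4})}$, as required.
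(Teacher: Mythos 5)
Your proposal is correct and follows essentially the same route as the paper: an increasing $C^2$ reparametrization $\psi$ (exponential-type, calibrated via the constant $\sigma$ from Lemma \ref{condsigma}) turns the quasiconcave $F$ into a concave operator $G=\psi\circ F$ that is uniformly elliptic on the range of $D^2u$, after which the classical Evans--Krylov a priori estimate applies. Your explicit remark on the gap between ellipticity on the compact set $\{\|M\|\le R\}$ and global ellipticity, and the two standard remedies, is a point the paper leaves implicit.
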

\begin{proof}
We prove that there exists a function $\psi:\R\to\R$ with $\psi(0)=0$ and $\psi\in C^2$ such that $\psi(F):\mathcal{S}_n\to\R$ is concave and $u$ solves $\psi(F(D^2u))=0$ in $B_1$. We heavily use that $F\in C^2$ to exploit the second order characterization of quasiconcave functions. Indeed, if $F$ is quasiconcave we have that there exists $\sigma>0$ such that for any $M\in\mathcal{S}_n$ it holds
\[
F_{ij,kl}M_{ij}M_{kl}\leq\sigma F_{ij}F_{kl}M_{ij}M_{kl}.
\]
We define for $k>0$ to be selected the function $\psi:\R\to\R$ as
\[
\psi(r)=-e^{-k r}+1.
\]
It is immediate to see that $\psi$ is increasing and $\psi(0)=0$ if and only if $r=0$. We set
\[
G(M)=\psi(F(M)).
\]
We first observe that if $u$ solves $F(D^2u)=0$ with $F(0)=0$, then it solves $G(D^2u)=0$ in the same domain. We now check that $F$ is uniformly elliptic with ellipticity constants depending on $D^2u$ (and hence on $\|u\|_{C^{1,1}}$). Indeed
\[
G_{ij}=\psi'(F)F_{ij};
\]
\[
G_{ij,kl}=\psi''(F)F_{ij}F_{kl}+\psi'(F)F_{ij,kl}.
\]
Since
\[
\psi'(r)=k e^{-k r}\text{ and }\psi''(r)=-k^2e^{-k r}
\]
we get
\[
G_{ij,kl}=k e^{-k F}\left(F_{ij,kl}-k F_{ij}F_{kl}\right)
\]
and hence for all $M\in\mathcal{S}_n$
\[
G_{ij,kl}M_{ij}M_{kl}=k e^{-k F}\left(F_{ij,kl}M_{ij}M_{kl}-k F_{ij}F_{kl}M_{ij}M_{kl}\right).
\]
Then, if we choose $k\geq\max\{\sigma,0\}$ we get 
\begin{multline*}
G_{ij,kl}M_{ij}M_{kl}=k e^{-k F}\left(F_{ij,kl}M_{ij}M_{kl}-k F_{ij}F_{kl}M_{ij}M_{kl}\right)\\
\leq k e^{-k F}\left(F_{ij,kl}M_{ij}M_{kl}-\sigma F_{ij}F_{kl}M_{ij}M_{kl}\right)\leq0.
\end{multline*}

This implies that $G\in C^2$ is concave. Moreover
\[
G_{ij,kl}\xi_i\xi_j=\psi'(F)F_{ij}\xi_i\xi_j\text{ for all }\xi\in\R^n.
\]
Since $F$ is uniformly elliptic with ellipticity constants $\lambda,\Lambda$, then $G$ is uniformly elliptic with ellipticity constants $\lambda\psi'(F)$ and $\Lambda\psi'(F)$, which in turn depend on $|D^2u|_{0;B_{3/4}}$. We are then in position to apply the classical result of L.C. Evans \cite{Evans82}, see also Theorem 6.1 in \cite{CC} and \cite{KrylovBookNew}, to the $C^2$ solution of $G(D^2u)=0$ in $B_1$, where $G$ is a concave operator, and conclude the desired estimate.
\end{proof}

\subsection{A priori $C^{1,1}$ estimates through the Bernstein method and some consequences}
We first prove a key result, as in \cite{CC}, which says that $u_e$ solves a linear equation in nondivergence form, while $u_{ee}$ is a subsolution to a linear equation, the second one being valid under quasiconcavity assumptions.
\begin{lemma}\label{Lue}
Let $F=F(M):\mathcal{S}_n\to\R$ be quasiconcave, $F\in C^2$ and such that $F(0)=0$. Consider the Hessian equation
\[
F(D^2u(x))=0\text{ in }\Omega.
\]
Then, setting $\mathcal{L}:=a_{ij}(x)\partial_{ij}=F_{ij}(D^2u(x))\partial_{ij}$ with $\lambda|\xi|^2\leq a_{ij}(x)\xi_i\xi_j\leq\Lambda|\xi|^2$, the following statements hold:
\begin{itemize}
\item[(i)] If $u\in C^3(\Omega)$ then
 \[
\mathcal{L}u\leq0\text{ and }\mathcal{L}u_e=0\text{ in }\Omega.
\]
\item[(ii)] If $u\in C^4(\Omega)$ then
\[
\mathcal{L}u_{ee}\geq0\text{ in }\Omega.
\]
\end{itemize}
\end{lemma}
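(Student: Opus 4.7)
The plan is to exploit the $C^2$ regularity of $F$ together with the first- and second-order characterizations of quasiconcavity recalled in the preliminaries. Uniform ellipticity of $\mathcal{L}$ is immediate: the coefficient matrix $a_{ij}(x):=F_{ij}(D^2u(x))$ inherits the bounds $\lambda|\xi|^2\leq F_{ij}\xi_i\xi_j\leq\Lambda|\xi|^2$ pointwise in $\Omega$. The three pointwise assertions $\mathcal{L}u\leq 0$, $\mathcal{L}u_e=0$, and $\mathcal{L}u_{ee}\geq 0$ then follow from three different uses of the equation $F(D^2u)=0$.

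The identity $\mathcal{L}u_e=0$ comes from differentiating the equation once in the direction $e$: the chain rule yields $F_{ij}(D^2u)(u_e)_{ij}=0$, which is exactly $\mathcal{L}u_e=0$. The hypothesis $u\in C^3$ is used here to commute $\partial_e$ with the Hessian. For the inequality $\mathcal{L}u\leq 0$ in (i), I would invoke the first-order characterization of differentiable quasiconcave functions recalled in the excerpt, namely $F(N)\geq F(M)\Rightarrow F_{ij}(M)(N-M)_{ij}\geq 0$. Choosing $N=0$ and $M=D^2u(x)$, together with the assumption $F(0)=0=F(D^2u(x))$, yields $-F_{ij}(D^2u)u_{ij}\geq 0$, which is the claim.

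For (ii), I would differentiate the equation a second time in the direction $e$, which at each point produces
\[
F_{ij,kl}(D^2u)\,u_{ije}\,u_{kle}+F_{ij}(D^2u)\,u_{ijee}=0,
\]
so that $\mathcal{L}u_{ee}=-F_{ij,kl}(D^2u)\,u_{ije}\,u_{kle}$. The crucial observation is that the symmetric matrix $N:=D^2u_e$ satisfies $F_{ij}(D^2u)N_{ij}=\mathcal{L}u_e=0$ by the previous step. The second-order characterization of quasiconcavity, which asserts $F_{ij,kl}(M)N_{ij}N_{kl}\leq 0$ whenever $F_{ij}(M)N_{ij}=0$ (and which is precisely the condition highlighted after Lemma \ref{condsigma}), then applies pointwise with this choice of $M$ and $N$, giving $\mathcal{L}u_{ee}\geq 0$.

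I do not anticipate any real obstacle: the argument is essentially a repackaging of the two characterizations of quasiconcavity into the language of the linearized operator. The only mild subtlety is to verify that the smoothness assumptions ($u\in C^3$ in (i) and $u\in C^4$ in (ii)) are exactly what is needed to justify the chain-rule manipulations and the interchange of partial derivatives; the extra derivative in (ii) is spent on commuting $\partial_e$ twice through the Hessian so that the second differentiated equation above makes classical sense.
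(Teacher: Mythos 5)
Your proposal is correct and follows essentially the same route as the paper: differentiate the equation once and twice, and feed the resulting identities into the first- and second-order characterizations of quasiconcavity. The only cosmetic differences are that the paper derives $\mathcal{L}u\leq 0$ via the auxiliary function $\psi(t)=F((1-t)D^2u)$ (which is just a repackaging of the first-order characterization you invoke), and for (ii) it primarily uses the $\sigma$-form of Lemma \ref{condsigma} while explicitly noting your direct use of \eqref{soc} with $N=D^2u_e$ as an equivalent alternative.
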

\begin{proof}
Let $\psi(t)=F((1-t)D^2u)$. We have $\psi(0)=0$, $\psi(1)=F(0)=0$. Since $F$ is quasiconcave, it follows that $\psi\geq0$ in $[0,1]$. Therefore, by the first-order conditions, being $F\in C^1$ and $\psi(1)=\psi(0)$, we get
\[
0\leq \psi'(0)\cdot (1-0)=F_{ij}(D^2u(x))(-\partial_{ij}u(x))=-\mathcal{L}u
\]
Differentiating the equation $F(D^2u(x))=0$ we get $\mathcal{L}u_e=0$. This proves (i).\\
To prove (ii) we use that for a quasiconcave $F$ by Lemma \ref{condsigma} there exists a constant $\sigma\in\R$ such that for all $M\in\mathcal{S}_n$
\[
F_{ij,kl}(D^2u(x))M_{ij}M_{kl}\leq \sigma F_{ij}(D^2u(x))F_{kl}(D^2u(x))M_{ij}M_{kl}.
\]
Thus, we differentiate once more the equation to find, using that $\mathcal{L}u_e=0$
\[
0=\mathcal{L}u_{ee}+F_{ij,kl}(D^2u(x))(u_e)_{ij}(u_e)_{kl}\leq \mathcal{L}u_{ee}+\sigma F_{ij}(D^2u(x))F_{kl}(D^2u(x))(u_e)_{ij}(u_e)_{kl}=\mathcal{L}u_{ee}.
\]
Alternatively, instead of using Lemma \ref{condsigma}, one can exploit the characterization \eqref{soc} for quasiconcave functions to conclude
\[
DF(D^2u)\cdot D^2u_e=F_{ij}(D^2u)(u_{e})_{ij}=\mathcal{L}u_{e} =0\implies F_{ij,kl}(D^2u(x))(u_e)_{ij}(u_e)_{kl}\leq0.
\]
\end{proof}
\begin{rem}
The previous lemma continues to hold when $u\in C^2$ using difference quotients instead of derivatives \cite{CDV}.
\end{rem}
We also need the following result that is a consequence of the uniform ellipticity (no other assumptions on $F$ are required), cf. Lemma 6.4 in \cite{CC}.
\begin{lemma}\label{twoside}
If $F(M_1)=F(M_2)=0$ and $F$ is uniformly elliptic, then
\[
\frac{\lambda}{\Lambda+\lambda}\|M_2-M_1\|\leq \|(M_2-M_1)^+\|=\sup_{e\in\R^n,|e|=1}(e^T(M_2-M_1)e)^+.
\]
\end{lemma}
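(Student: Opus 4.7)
The plan is to decompose $N := M_2 - M_1$ into its positive and negative parts as a symmetric matrix and apply the uniform ellipticity \eqref{uell} to each piece separately. Using the spectral decomposition of $N$, I write $N = N^+ - N^-$, where $N^+, N^- \in \mathcal{S}_n$ are positive semidefinite with orthogonal ranges (the pieces built from the positive, resp.\ negative, eigenvalues of $N$). This decomposition immediately gives the second equality in the statement: the spectral norm of the PSD matrix $N^+$ equals its largest eigenvalue, which is $\max(0, \lambda_{\max}(N)) = \sup_{|e|=1}(e^T N e)^+$.

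For the main inequality, I would exploit the hypothesis $F(M_2) = F(M_1)$ by telescoping through the intermediate point $M_1 + N^+$:
\[
0 = F(M_2) - F(M_1) = \bigl[F(M_1 + N^+) - F(M_1)\bigr] + \bigl[F(M_1 + N^+ - N^-) - F(M_1 + N^+)\bigr].
\]
For the first bracket, \eqref{uell} applied with base matrix $M_1$ and nonnegative increment $N^+$ gives the upper bound $\Lambda \|N^+\|$. For the second bracket, I set $B := M_1 + N^+ - N^-$ and rewrite it as $-(F(B+N^-) - F(B))$; then \eqref{uell} applied with base $B$ and nonnegative increment $N^- \geq 0$ produces the upper bound $-\lambda \|N^-\|$. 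Summing the two upper bounds yields $0 \leq \Lambda\|N^+\| - \lambda\|N^-\|$, i.e.\ $\|N^-\| \leq \frac{\Lambda}{\lambda}\|N^+\|$.

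To close, I combine this with the triangle inequality $\|N\| \leq \|N^+\| + \|N^-\|$:
\[
\|N\| \leq \|N^+\| + \|N^-\| \leq \Bigl(1 + \frac{\Lambda}{\lambda}\Bigr)\|N^+\| = \frac{\Lambda+\lambda}{\lambda}\|N^+\|,
\]
which rearranges to the stated bound. I don't expect any real obstacle: no smoothness or concavity of $F$ intervenes, only \eqref{uell}. The only point that requires care is the sign bookkeeping when one subtracts the PSD matrix $N^-$, which I handle by rewriting the decrement as the negative of an increment before invoking \eqref{uell}.
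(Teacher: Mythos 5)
Your proof is correct, and it is essentially the standard argument behind Lemma 6.4 of \cite{CC}, which is all the paper itself invokes (no independent proof is given there): decompose $M_2-M_1$ into orthogonal positive and negative parts, telescope through $M_1+(M_2-M_1)^+$, and apply \eqref{uell} to each monotone step to get $\lambda\|(M_2-M_1)^-\|\leq\Lambda\|(M_2-M_1)^+\|$. The sign bookkeeping and the identification of $\|(M_2-M_1)^+\|$ with $\sup_{|e|=1}(e^T(M_2-M_1)e)^+$ are both handled correctly.
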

We prove the following $C^{1,1}$ interior estimates for quasiconcave nonlinear $C^2$ functionals $F$
\begin{thm}\label{C11}
Let $F$ be quasiconcave, uniformly elliptic, $F\in C^1$ with $F(0)=0$. Let $u\in C^3(\overline{B}_1)$ be such that $F(D^2u)=0$ in $B_1$. Then
\[
|Du|_{0;B_{1/2}}\leq C\mathrm{osc}_{B_1}u.
\]
If, in addition, $F\in C^2$ and $u\in C^4$, we have the estimate
\[
|D^2u|_{0;B_{1/2}}\leq C\mathrm{osc}_{B_1}u.
\]
Here, $C$ is a universal constant, depending only on $n,\lambda,\Lambda$. 
\end{thm}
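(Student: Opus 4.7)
The proof proceeds via the Bernstein technique in the spirit of Section 9 of \cite{CC}. The linearized operator
\[
\mathcal{L}=F_{ij}(D^2u(x))\partial_{ij}
\]
is uniformly elliptic with constants $\lambda,\Lambda$, and Lemma \ref{Lue} supplies the three structural facts $\mathcal{L}u\leq 0$, $\mathcal{L}u_e=0$ for every unit $e$, and (under $F\in C^2$, $u\in C^4$) $\mathcal{L}u_{ee}\geq 0$, which, beyond uniform ellipticity, are the only inputs from the equation. Throughout, let $\eta\in C^\infty_c(B_1)$ be a cutoff with $\eta\equiv 1$ on $\overline{B}_{1/2}$ and $|D\eta|+|D^2\eta|\leq C(n)$, and set $M=\sup_{B_1}u$, so that $M-u\geq 0$.

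For the gradient bound, I would test with the auxiliary function
\[
w=\eta^2|Du|^2+\mu(M-u)^2
\]
for a large universal $\mu$. From $\mathcal{L}u_e=0$ one has $\mathcal{L}|Du|^2=2a_{ij}u_{ki}u_{kj}\geq 2\lambda|D^2u|^2$, and from $\mathcal{L}u\leq 0$ together with $M-u\geq 0$ one gets $\mathcal{L}(M-u)^2\geq 2\lambda|Du|^2$. The product-rule expansion of $\mathcal{L}(\eta^2|Du|^2)$ produces a good term $2\lambda\eta^2|D^2u|^2$ plus cross errors bounded by $C\eta|Du||D^2u|+C|Du|^2$, which are absorbed via Cauchy's inequality into the good term and the $\mu|Du|^2$ surplus. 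Choosing $\mu$ universal and large so that $(2\lambda\mu-C)|Du|^2\geq 0$ yields $\mathcal{L}w\geq 0$ in $B_1$, and the maximum principle gives $w\leq\mu(\mathrm{osc}_{B_1}u)^2$. Restricting to $B_{1/2}$, where $\eta\equiv 1$, gives the first estimate.

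For the Hessian bound, I would fix a unit direction $e$ and, now armed with the gradient bound, apply a second Bernstein argument to $u_{ee}$—which by Lemma \ref{Lue}(ii) is a classical subsolution of $\mathcal{L}v=0$—through an auxiliary function of the form
\[
v=\eta^2(u_{ee})_+^2+\mu_1|Du|^2+\mu_2(M-u)^2
\]
on $\overline{B}_{3/4}$, with $\mu_1,\mu_2$ large universal constants. On the open set $\{u_{ee}>0\}$ one has $\mathcal{L}((u_{ee})^2)=2u_{ee}\mathcal{L}u_{ee}+2a_{ij}(u_{ee})_i(u_{ee})_j\geq 2\lambda|Du_{ee}|^2$, which absorbs the cross terms from the product rule exactly as in Step~1; on $\{u_{ee}\leq 0\}$ the contribution $\eta^2(u_{ee})_+^2$ vanishes. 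The lower-order pieces $\mu_1|Du|^2+\mu_2(M-u)^2$ are handled as in Step~1 using $\mathcal{L}u_e=0$ and $\mathcal{L}u\leq 0$. This yields $\sup_{\overline{B}_{1/2}}(u_{ee})_+\leq C\,\mathrm{osc}_{B_1}u$ uniformly in $e$. Finally, since $F(D^2u(x))=F(0)=0$, Lemma \ref{twoside} applied with $M_1=0$, $M_2=D^2u(x)$ upgrades the one-sided bound to
\[
\|D^2u(x)\|\leq\tfrac{\Lambda+\lambda}{\lambda}\sup_{|e|=1}(u_{ee}(x))_+\leq C\,\mathrm{osc}_{B_1}u,\qquad x\in B_{1/2}.
\]

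The main technical obstacle is the Hessian step: the positivity $u_{ee}\mathcal{L}u_{ee}\geq 0$ needed to discard the bad term in $\mathcal{L}((u_{ee})^2)$ holds only on $\{u_{ee}\geq 0\}$, which forces the use of the positive part $(u_{ee})_+$ and some care with its (non-)regularity across $\{u_{ee}=0\}$—this is however standard and can be dealt with by mollification or by applying the maximum principle only at points where $u_{ee}>0$. The remaining work is routine bookkeeping: choose $\mu,\mu_1,\mu_2$ so that all cross and lower-order errors are absorbed by the good terms $\lambda\eta^2|D^2u|^2$ and $\lambda\eta^2|Du_{ee}|^2$. The passage from the one-sided bound on $(u_{ee})_+$ to the full $L^\infty$ bound on $D^2u$ is then an immediate consequence of uniform ellipticity via Lemma \ref{twoside}.
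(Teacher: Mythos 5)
Your proposal is correct and follows essentially the same Bernstein argument as the paper: the same auxiliary functions $\eta^2|Du|^2+\mu(M-u)^2$ and $\eta^2(u_{ee}^+)^2+\mu|Du|^2$ (your extra term $\mu_2(M-u)^2$ is harmless), the same use of Lemma \ref{Lue} to get $\mathcal{L}u\leq0$, $\mathcal{L}u_e=0$, $\mathcal{L}u_{ee}\geq0$, and the same final upgrade of the one-sided bound via Lemma \ref{twoside} with $M_1=0$, $M_2=D^2u$.
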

\begin{proof}
The proof is the same as that in \cite{CC}, we sketch it here for reader's convenience. Define, for $\delta>0$ to be chosen later, the auxiliary function
\[
z=\delta(M-u)^2+\eta^2|Du|^2\in C^2(\overline{B}_1)\ ,M=\sup_{\overline{B}_1}u.
\]
where $\eta$ is an appropriate cut-off function. One proves by the maximum principle that $\sup_{B_\frac12}z\leq \delta (\mathrm{osc}_{B_1}u)^2$ by observing that for $\delta$  chosen large enough and a positive constant $C>0$
\[
\mathcal{L}z\geq 2\delta\lambda|Du|^2- C|Du|^2\geq0.
\]
To prove the second derivative estimate one can consider, for a unitary direction $e\in \R^n$ and $\mu>0$ to be chosen later, the function
\[
h=\mu|Du|^2+\eta^2(u_{ee}^+)^2\in C^2(\overline{B}_1)\ ,
\]
where $v^+=\max\{v,0\}$ is the positive part of $v$. A similar calculation that requires to differentiate the equation twice and exploits $\mathcal{L}u_{x_k}=0$ together with $\mathcal{L}u_{ee}\geq0$ (see Lemma \ref{Lue}) leads now to
\[
\mathcal{L}h\geq0
\]
in the set $\{x\in B_1:u_{ee}(x)>0\}$ and  $\mu$ large. Therefore this leads to the one-side bound
\[
u_{ee}^+\leq C\text{ in }B_\frac12.
\]
Using the equation $F(D^2u)=0$ together with Lemma \ref{twoside} (applied with $M_2=D^2u$ and $M_1=0$, using also $F(0)=0$), we get a two-side bound on $D^2u$.
\end{proof}
\begin{rem}
An observation made in \cite{CDV} allows to weaken the requirement $u\in C^4$ by using difference quotients and work with viscosity solutions. The basic idea relies on considering $z$ of the following form
\[
z=\eta^2\left(\frac{u(x+he)-u(x)}{h}\right)^2+\delta \int_0^1 u^2(x+the)\,dt,
\]
where the full derivatives have been replaced by a difference quotient and the zeroth order term has been replaced with an average value of $u^2$ in the direction of $e$.
\end{rem}

\begin{cor}\label{lioell1}
Let $F$ be quasiconcave, uniformly elliptic, $F\in C^2$ with $F(0)=0$. Let $u$ be a classical solution of $F(D^2u)=0$ in $\R^n$. Assume that
\[
|u(x)|\leq C(1+|x|^\gamma)\ ,\gamma<2, x\in\R^n.
\]
Then, $u$ must be a polynomial of degree at most $\floor*{\gamma}=1$, i.e. $u$ is at most an affine function. In particular, if either $\gamma\in(0,1)$ (i.e. $u$ has sublinear growth) or $\gamma=0$ (i.e. $u$ is bounded), then $u$ must be a constant. 
\end{cor}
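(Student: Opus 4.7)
The strategy is the classical rescaling-and-pointwise-estimate argument: combine the interior $C^{1,1}$ bound of Theorem \ref{C11} with the polynomial growth hypothesis on $u$ and let the radius of the ball tend to infinity.

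For each $R\geq 1$, introduce the rescaled function $u_R(x):=R^{-2}u(Rx)$ defined on $B_1$. A direct computation gives $D^2u_R(x)=D^2u(Rx)$, hence $F(D^2u_R)=0$ in $B_1$, so $u_R$ is again a classical solution of the same equation on the unit ball. Since $F$ satisfies the hypotheses of Theorem \ref{C11}, we may apply its second-derivative estimate to $u_R$ to obtain
\[
|D^2u_R|_{0;B_{1/2}}\leq C\,\osc_{B_1}u_R,
\]
where $C$ is universal. The growth assumption then bounds the right-hand side:
\[
\osc_{B_1}u_R\leq 2|u_R|_{0;B_1}\leq 2R^{-2}\sup_{B_R}|u|\leq 2CR^{-2}(1+R^\gamma)\leq C'R^{\gamma-2}
\]
for $R\geq 1$. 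Unwinding the scaling yields $|D^2u|_{0;B_{R/2}}\leq CC'R^{\gamma-2}$; since $\gamma<2$, sending $R\to\infty$ forces $D^2u\equiv 0$ on $\R^n$, so $u$ is affine.

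For the finer conclusions when $\gamma<1$, one repeats the argument using the gradient part of Theorem \ref{C11} applied to $u_R$. Since $|Du_R|_{0;B_{1/2}}\leq C\osc_{B_1}u_R\leq C'R^{\gamma-2}$ and $Du_R(x)=R^{-1}Du(Rx)$, one gets $|Du|_{0;B_{R/2}}\leq CC'R^{\gamma-1}\to 0$ as $R\to\infty$. Hence $Du\equiv 0$ and $u$ is constant. The case $\gamma=0$ is contained in this, and the sublinear case $\gamma\in(0,1)$ is treated identically.

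The only genuine subtlety is regularity: Theorem \ref{C11} is stated for $u\in C^4$, whereas here we only assume that $u$ is a classical ($C^2$) solution. This is not a real obstruction, since we can either invoke the remark following Theorem \ref{C11} that allows one to replace full derivatives by difference quotients (so that the estimate persists for viscosity, and in particular $C^2$, solutions), or bootstrap via the a priori $C^{2,\alpha}$ estimate of Theorem \ref{EvansKrylov} combined with the Schauder theory for the linearization $\mathcal{L}$ of Lemma \ref{Lue} to upgrade $u$ to $C^4$ on every bounded ball, after which Theorem \ref{C11} applies directly to each $u_R$. Either route yields the scale-invariant bound above and closes the argument.
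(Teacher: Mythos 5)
Your proposal is correct and follows essentially the same route as the paper: both obtain the scaled interior bounds $|Du|_{0;B_{R/2}}\leq CR^{-1}\osc_{B_R}u$ and $|D^2u|_{0;B_{R/2}}\leq CR^{-2}\osc_{B_R}u$ from Theorem \ref{C11} by rescaling (the paper uses the slightly more general rescaling $v(x)=\rho^{-1}u(\omega x)$, of which your choice $\rho=\omega^2=R^2$ is the special case leaving $F$ unchanged), and then let $R\to\infty$ using the growth hypothesis. Your extra paragraph on upgrading a $C^2$ solution to the regularity needed by Theorem \ref{C11} is a reasonable supplement that the paper leaves implicit.
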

\begin{proof}
The proof follows from the gradient and Hessian bounds stated for solutions of $F(D^2u)=0$ in $B_R$.
\[
|Du|_{0;B_{R/2}}\leq \frac{C}{R}\mathrm{osc}_{B_R}u,
\]
\[
|D^2u|_{0;B_{R/2}}\leq \frac{C}{R^2}\mathrm{osc}_{B_R}u.
\]
This can be done adjusting the previous proof as in \cite{KovatsEll}. Alternatively, it is enough to observe that if $u$ solves $F(D^2u)=0$, then $v(x)=\frac{1}{\rho}u(\omega x)$ solves
\[
\frac{\omega^2}{\rho}F\left(\frac{\rho}{\omega^2}D^2v\right)=0,
\]
where the constant in front of $F$ allows to preserve the uniform ellipticity of the starting equation. Therefore, the estimates in $B_R$ follow by scaling.
\end{proof}
An immediate consequence of Theorem \ref{EvansKrylov} and the previous $C^{1,1}$ a priori estimates is a $C^{2,\alpha}$ a priori estimate. Note that this statement is much weaker than that of Theorem 6.6 in \cite{CC}, since it is not stated for viscosity solutions, although it is valid under slightly weaker assumptions on $F$ compared with \cite[Theorem 6.6]{CC} and \cite[Corollary 9.4]{CC} (i.e. we are assuming only convexity of the superlevel sets).
\begin{cor}
Let $F$ be quasiconcave and satisfy $F\in C^\infty$. Let $u\in C^2(B_1)$ be a solution of $F(D^2u)=0$ in $B_1$. Then $u\in C^\infty(B_1)$ and
\[
\|u\|_{C^{2,\alpha}(B_\frac12)}\leq C(n,\lambda,\Lambda,|u|_{0;B_1},|F(0)|).
\]
\end{cor}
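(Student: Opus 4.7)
The plan is to combine Theorem \ref{EvansKrylov} and Theorem \ref{C11} after (i) reducing to $F(0)=0$ by a shift and (ii) bootstrapping the assumed $C^2$ regularity of $u$ up to $C^\infty$, which is needed to make the Bernstein-type arguments underlying Theorem \ref{C11} fully rigorous on the given class of solutions.

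First I would handle the case $F(0)\neq0$ by a shift. By uniform ellipticity \eqref{uell}, taking $M=0$ and $N=tI$ with $t\ge0$ gives $F(tI)-F(0)\in[\lambda t,\Lambda t]$, with an analogous statement for $t\le0$; by the intermediate value theorem there exists $a\in\R$ with $|a|\le|F(0)|/\lambda$ such that $F(aI)=0$. Setting
\[
\tilde F(M):=F(M+aI),\qquad \tilde u(x):=u(x)-\tfrac{a}{2}|x|^2,
\]
one checks that $\tilde F(D^2\tilde u)=0$ in $B_1$, that $\tilde F\in C^\infty$ is quasiconcave and uniformly elliptic with the same constants $\lambda,\Lambda$, and that $\tilde F(0)=0$. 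Since $|\tilde u|_{0;B_1}\le |u|_{0;B_1}+|a|/2$, any estimate for $\tilde u$ in terms of $|\tilde u|_{0;B_1}$ translates into one for $u$ in terms of $|u|_{0;B_1}$ and $|F(0)|$.

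Next I would upgrade the regularity from $C^2$ to $C^\infty$. Theorem \ref{EvansKrylov} applied to the $C^2$ function $\tilde u$ already yields $\tilde u\in C^{2,\alpha}$ locally. Once $\tilde u\in C^{2,\alpha}$, every directional derivative $\tilde u_e$ solves the linear uniformly elliptic equation $F_{ij}(D^2\tilde u)\,\partial_{ij}\tilde u_e=0$ whose coefficients are now of class $C^\alpha$, so interior Schauder theory gives $\tilde u_e\in C^{2,\alpha}$, hence $\tilde u\in C^{3,\alpha}$. Iterating and using $F\in C^\infty$ yields $\tilde u\in C^\infty(B_1)$, and therefore $u\in C^\infty(B_1)$, which is the qualitative part of the statement.

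Finally, with $\tilde u$ smooth one can chain the two quantitative estimates. Theorem \ref{C11}, applied on a slightly larger ball by a standard scaling and covering argument, gives
\[
|D\tilde u|_{0;B_{3/4}}+|D^2\tilde u|_{0;B_{3/4}}\le C\,\mathrm{osc}_{B_1}\tilde u\le C\,|\tilde u|_{0;B_1}
\]
with $C$ universal; plugging this quantitative $C^{1,1}$ bound into Theorem \ref{EvansKrylov} (rescaled to $B_{3/4}$) upgrades it to $\|\tilde u\|_{C^{2,\alpha}(B_{1/2})}\le C(n,\lambda,\Lambda,|\tilde u|_{0;B_1})$, and unwinding the shift $u=\tilde u+\tfrac{a}{2}|x|^2$ produces the claimed bound with the additional dependence on $|F(0)|$. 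The main obstacle I expect is precisely the apparent circular dependence between Theorem \ref{EvansKrylov} (which needs a $C^{1,1}$ bound on $u$) and Theorem \ref{C11} (which is stated for $C^3$/$C^4$ solutions): one cannot invoke them in sequence on a mere $C^2$ solution, and the bootstrap to $C^\infty$ carried out in the second step is exactly what breaks this circle and makes the chain rigorous.
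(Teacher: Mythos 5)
Your proposal is correct and follows essentially the same route as the paper: combine Theorem \ref{EvansKrylov} with the Bernstein $C^{1,1}$ bound of Theorem \ref{C11}, obtaining smoothness by the standard Schauder bootstrap (the paper cites Proposition 9.1 of \cite{CC} for this step, which is exactly the differentiate-and-apply-Schauder iteration you describe). Your explicit normalization to $F(0)=0$ and your remark on breaking the apparent circularity between the two theorems via the bootstrap are just careful expansions of what the paper leaves implicit.
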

\begin{proof}
The proof follows combining Theorem \ref{EvansKrylov} with the $C^{1,1}$ a priori estimate from Theorem \ref{C11}. The smoothness of $u\in C^\infty$ follows from the bootstrap argument in Proposition 9.1 of \cite{CC} since $F$ itself is smooth.
\end{proof}
The previous estimate leads to the following polynomial Liouville theorem, as in the case of the Laplace equation, cf. \cite{ColdingMinicozzi,FRRO}.
\begin{cor}\label{lioell}
Let $F$ be quasiconcave, uniformly elliptic, $F\in C^2$ with $F(0)=0$. Let $u$ be a classical solution of $F(D^2u)=0$ in $\R^n$. Assume that
\[
|D^2u|\leq C.
\]
Then, $u$ must be a polynomial of degree at most $2$. 
\end{cor}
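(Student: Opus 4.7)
The plan is to apply the $C^{2,\alpha}$ a priori estimate from the previous corollary to a family of rescalings of $u$ and let the scaling parameter go to infinity, exploiting the fact that $F(D^2u)=0$ is invariant under the quadratic rescaling $v_R(x)=R^{-2}u(Rx)$.

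First I would observe that the global Hessian bound $|D^2u|\leq C$ implies, by integrating twice from the origin, that $|u(x)|\leq C_0(1+|x|^2)$ for some constant $C_0$ depending only on $|u(0)|,|Du(0)|$ and $C$. In particular, $\mathrm{osc}_{B_R}u\leq C_1 R^2$ for all $R\geq 1$.

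Next, for each $R\geq 1$ I would introduce the rescaled function
\[
v_R(x):=\frac{u(Rx)}{R^2},\qquad x\in B_1.
\]
A direct computation gives $D^2v_R(x)=D^2u(Rx)$, so $v_R\in C^2(\overline{B}_1)$ is a classical solution of $F(D^2v_R)=0$ in $B_1$ with the same operator $F$ (in particular with the same ellipticity constants $\lambda,\Lambda$). Moreover
\[
|v_R|_{0;B_1}\leq \frac{\sup_{B_R}|u|}{R^2}\leq \frac{C_0(1+R^2)}{R^2}\leq 2C_0,
\]
which is independent of $R$. Applying the previous corollary (i.e.\ Theorem \ref{EvansKrylov} combined with the $C^{1,1}$ estimate of Theorem \ref{C11}, both valid since $F$ is quasiconcave, $C^2$ and $F(0)=0$) I obtain a universal bound
\[
\|v_R\|_{C^{2,\alpha}(\overline{B}_{1/2})}\leq C(n,\lambda,\Lambda,C_0,|F(0)|),
\]
uniformly in $R$.

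Finally, unscaling the H\"older seminorm yields
\[
[D^2u]_{C^\alpha(B_{R/2})}=R^{-\alpha}\,[D^2v_R]_{C^\alpha(B_{1/2})}\leq C R^{-\alpha},
\]
and letting $R\to\infty$ forces $[D^2u]_{C^\alpha(\R^n)}=0$. Hence $D^2u$ is constant on $\R^n$, so $u$ is a polynomial of degree at most $2$, as claimed. There is no real obstacle here beyond checking that the constant in the $C^{2,\alpha}$ estimate depends only on $|v_R|_{0;B_1}$ (and the already fixed universal data) and not on any auxiliary quantity that could blow up with $R$; this is precisely the content of the previous corollary and is the reason the scaling argument closes.
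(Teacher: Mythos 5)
Your proposal is correct and is precisely the argument the paper intends: the paper leaves this corollary without an explicit proof, pointing to the classical Laplace-equation case, which is exactly your rescaling $v_R(x)=R^{-2}u(Rx)$ (operator-preserving, as in the proof of Corollary \ref{lioell1}) combined with the uniform interior $C^{2,\alpha}$ bound and the decay $[D^2u]_{C^\alpha(B_{R/2})}\leq CR^{-\alpha}$. One small simplification: since $|D^2u|\leq C$ is already assumed, you can apply Theorem \ref{EvansKrylov} to $v_R$ directly (its constant depends only on $\|v_R\|_{C^{1,1}}$, which is uniformly bounded), bypassing the Bernstein $C^{1,1}$ step and its $u\in C^4$ requirement.
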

\section{$C^{2,\alpha}$ regularity estimates for viscosity solutions of quasiconcave equations}\label{sec;regell}

In this section we prove that viscosity solutions to quasiconcave equations $F(D^2u)=0$ satisfy interior $C^{2,\alpha}$ estimates for some universal $\alpha\in(0,1)$ in terms of $|u|_{0;B_1}$. Our main result reads as follows, and extends Theorem 6.6 in \cite{CC} under weaker conditions on $F$.
\begin{thm}\label{mainell}
Let $F$ be quasiconcave, continuous (with no other smoothness assumptions) and uniformly elliptic, and $u$ be a continuous viscosity solution to $F(D^2u)=0$ in $B_1$. Then $u\in C^{2,\alpha}(\overline{B}_\frac12)$ and
\[
\|u\|_{C^{2,\alpha}(\overline{B}_\frac12)}\leq C,
\]
where $0<\alpha<1$ and $C$ are universal constants. $C$ here also depends on $|u|_{0;B_1}$ and $|F(0)|$.
\end{thm}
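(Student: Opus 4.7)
The plan is to mimic the Evans-Krylov regularity proof as carried out in \cite[Theorem 6.6]{CC}, replacing concavity of $F$ by its quasiconcavity at every step where concavity is actually used. The essential observation is that, by the convexity of the superlevel sets, if $F(M_1),F(M_2)\geq 0$ then $F\bigl(\tfrac{M_1+M_2}{2}\bigr)\geq 0$ as well, i.e., the level set $\{F=0\}$ is midpoint-preserving. This is the only structural consequence of concavity exploited in the classical argument, where one instead uses the stronger Jensen inequality $F\bigl(\tfrac{M_1+M_2}{2}\bigr)\geq \tfrac12(F(M_1)+F(M_2))$.

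\textbf{Step 1: second incremental quotients lie in the Pucci class.} For a unit vector $e\in\R^n$ and $h>0$ small, set
\[
w_h(x)=\frac{u(x+he)+u(x-he)-2u(x)}{h^2},\qquad v_h(x)=\tfrac12\bigl(u(x+he)+u(x-he)\bigr),
\]
so $v_h-u=\tfrac{h^2}{2}w_h$. Translation invariance gives that $x\mapsto u(x\pm he)$ are viscosity solutions of $F(D^2\cdot)=0$; transporting the midpoint observation above to test functions via sup-convolution regularization (as in Chapter 5 of \cite{CC}) yields that $v_h$ is a viscosity subsolution of $F(D^2 v_h)\geq 0$. Since $u$ solves $F(D^2u)=0$, uniform ellipticity forces $\mathcal{M}^+_{\lambda,\Lambda}(D^2(v_h-u))\geq F(D^2v_h)-F(D^2u)\geq 0$ in the viscosity sense, so $w_h\in\underline{\mathcal{S}}(\lambda,\Lambda,0)$ on $B_{1-h}$.

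\textbf{Step 2: Krylov-Safonov and functional relation give oscillation decay.} Applying the weak Harnack inequality to the non-negative supersolution $\sup_{B_r}w_h-w_h\in\overline{\mathcal{S}}(\lambda,\Lambda,0)$ reproduces the oscillation-decay step of \cite[Theorem 6.6]{CC}. The matching lower bound on $w_h$, which in the concave case is obtained by summing over finitely many directions, is here supplied by Lemma \ref{twoside}: applied with $M_1=D^2u(y)$ and $M_2=D^2u(x)$ (both on $\{F=0\}$), it upgrades a control on $(D^2u(x)-D^2u(y))^+$ to a control on the full difference, and hence provides the required two-sided control on $w_h$. Iteration on dyadic balls produces a universal Hölder exponent $\alpha\in(0,1)$ and a uniform-in-$h$ estimate $\operatorname{osc}_{B_r}w_h\leq C r^\alpha$.

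\textbf{Step 3: pass to the limit $h\to 0^+$.} Theorem \ref{C11} in its viscosity-solution form (obtained via the difference-quotient variant described in the remark following Theorem \ref{C11} and in \cite{CDV}) yields a uniform $C^{1,1}$ bound on $u$, hence a uniform $L^\infty$ bound on $w_h$. Combined with Step 2 this gives $w_h\to u_{ee}$ in $C^\alpha(\overline{B}_{1/2})$ as $h\to 0^+$ for each direction $e$; varying $e$ produces $D^2u\in C^\alpha(\overline{B}_{1/2})$ with the stated quantitative bound. The dependence on $|F(0)|$ is absorbed by replacing $F$ with $F-F(0)$ (which preserves quasiconcavity and uniform ellipticity), and the dependence on $|u|_{0;B_1}$ enters through the $C^{1,1}$ bound.

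\textbf{Main obstacle.} The delicate point is the viscosity-sense transfer in Step 1: quasiconcavity yields only the $\min$-type averaging inequality, not Jensen's linear one, so the conclusion $v_h\in\underline{\mathcal{S}}$ cannot be derived from sum-of-subsolutions theorems in a one-line fashion. The cleanest route is via sup-convolution regularization, producing semiconvex approximants that are twice differentiable a.e.; on this set the midpoint inequality is used pointwise, and the viscosity subsolution property of $v_h$ is recovered by stability as the regularization parameter vanishes. This is precisely where the convexity of the superlevel sets of $F$ is indispensable.
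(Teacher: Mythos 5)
Your proposal is correct in substance, but it closes the key $C^{1,1}\to C^{2,\alpha}$ step by a genuinely different route from the paper. The two arguments share the first half: the midpoint inequality $F\bigl(\tfrac{M_1+M_2}{2}\bigr)\geq\min\{F(M_1),F(M_2)\}$, transferred to viscosity solutions by sup-convolution, shows that $\tfrac12\bigl(u(\cdot+he)+u(\cdot-he)\bigr)$ is a subsolution and hence that the second incremental quotients lie in the Pucci class (this is exactly Theorem \ref{muuv} and Corollary \ref{quot}; note the class is $\underline{\mathcal{S}}(\lambda/n,\Lambda,0)$ rather than $\underline{\mathcal{S}}(\lambda,\Lambda,0)$, because of Theorem \ref{u-v}). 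From that point the paper does \emph{not} rerun the oscillation-decay iteration: it first extracts the $C^{1,1}$ bound via the local maximum principle (Theorem \ref{C11visc}) and then, in Theorem \ref{EKvisc}, composes $F$ with the increasing map $\psi(r)=1-e^{-kr}$ from Theorem \ref{EvansKrylov} so as to land on a concave equation $G(D^2u)=\psi(F(D^2u))=0$ and invoke the concave Evans--Krylov theorem as a black box. You instead run the Krylov--Safonov weak Harnack iteration directly on the quotients $w_h$, using Lemma \ref{twoside} for the two-sided control; since that lemma uses only uniform ellipticity and the one-sided input is precisely the subsolution property already established, the scheme of \cite[Theorem 6.6]{CC} does go through (one caveat of phrasing: Lemma \ref{twoside} does not \emph{replace} the summation over a finite net of directions --- it is used in the concave case as well, and the finite net is still needed here). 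What your route buys is that it bypasses the concavification step entirely: the transformation of Theorem \ref{EvansKrylov} is justified in the paper only for $F\in C^2$ through the second-order characterization of Lemma \ref{condsigma}, whereas in Theorem \ref{mainell} the operator is merely continuous, so your direct iteration is arguably the more self-contained way to conclude at this level of generality; what the paper's route buys is brevity, since the entire decay argument is outsourced to the classical theorem. The loose ends in your write-up --- the uniform-in-$h$ (or Alexandrov-point) version of the functional relation in Lemma \ref{twoside} needed to apply it to difference quotients, and the source of the uniform $C^{1,1}$ bound, for which the paper uses the local maximum principle rather than the Bernstein difference-quotient computation --- are standard and handled in \cite{CC}.
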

\begin{rem}
We can always reduce, for any uniformly elliptic $F$ (not necessarily concave or quasiconcave), to $F(0)=0$ in proving Theorem \ref{mainell}, cf. Remark 1 in Section 6.2 of \cite{CC}.
\end{rem}
To prove Theorem \ref{mainell} we start by proving $C^{1,1}$ regularity estimates for viscosity solutions. The main result to prove this level of regularity is the following
\begin{thm}\label{C11visc}
Let $F$ be uniformly elliptic, continuous and quasiconcave. Let $u\in C(B_1)$ be a viscosity solution to
\begin{equation}\label{eqell}
F(D^2u)=0\text{ in }B_1.
\end{equation}
Then, it satisfies the regularity estimate
\[
[u]_{C^{1,1}(B_\frac12)}\leq C(|u|_{0;B_1}+|F(0)|).
\]
\end{thm}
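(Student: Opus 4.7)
The plan is to extend the a priori estimate of Theorem \ref{C11} to viscosity solutions by approximation, in the spirit of Chapter 9 of \cite{CC}. First we reduce to $F(0)=0$: replacing $F$ by $\widetilde F := F - F(0)$ preserves quasiconcavity and the ellipticity constants, and only costs a term $|F(0)|$ in the final bound. The strategy then has three steps: (i) smooth $F$ to obtain $F_\varepsilon \in C^\infty$, still uniformly elliptic with the same $(\lambda,\Lambda)$ and quasiconcave, with $F_\varepsilon \to F$ locally uniformly on $\mathcal{S}_n$; (ii) solve the Dirichlet problem $F_\varepsilon(D^2 u_\varepsilon)=0$ in $B_1$ with boundary values matching $u$ (suitably mollified), obtaining smooth classical solutions $u_\varepsilon$ via Theorem \ref{EvansKrylov} applied to the concavified operator $G_\varepsilon = \psi_\varepsilon \circ F_\varepsilon$ together with the continuity method; (iii) apply the a priori estimate of Theorem \ref{C11} to each $u_\varepsilon$ and pass to the limit.

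For step (iii), Theorem \ref{C11} yields the universal bound
\[
[u_\varepsilon]_{C^{1,1}(B_{1/2})} \leq C\,\mathrm{osc}_{B_1} u_\varepsilon \leq C'(|u|_{0;B_1}+|F(0)|).
\]
By Arzel\`a--Ascoli a subsequence of $u_\varepsilon$ converges in $C^1(\overline{B}_{1/2})$ to some $\bar u \in C^{1,1}$, and stability of viscosity solutions under locally uniform convergence of both $u_\varepsilon$ and $F_\varepsilon$ identifies $\bar u$ as a viscosity solution of $F(D^2\bar u)=0$ with the same boundary data. Comparison then forces $\bar u = u$, and the $C^{1,1}$ bound passes to the limit.

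The main obstacle is step (i): standard mollification does not preserve quasiconcavity (in contrast to the convex case), so smoothing $F$ while retaining quasiconcavity, the precise ellipticity constants, and smoothness is delicate. One viable construction is to regularize the convex superlevel sets $\Sigma_\alpha := \{M : F(M) \geq \alpha\}$ individually---for instance by replacing each $\Sigma_\alpha$ with a smooth inner parallel body or by smoothing its support function---and then reconstruct $F_\varepsilon$ from the smoothed family, verifying that the original uniform ellipticity is preserved. Alternatively, one can bypass the smoothing of $F$ altogether and follow the direct viscosity-Bernstein argument flagged in the remark after Theorem \ref{C11} and worked out in \cite{CDV}: one replaces the pure derivatives in the auxiliary function by first- and second-order incremental quotients $\Delta_h^e u$ and $\Delta_h^{e,e} u$, and establishes the required Pucci inequalities in the viscosity sense, using the viscosity analog of Lemma \ref{Lue} to control the sign of these quotients under quasiconcavity. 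In either route, the technical heart is propagating the quasiconcavity through the smoothing or difference-quotient scheme while keeping the constants universal.
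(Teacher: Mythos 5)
Your proposal does not close the key gap, and it is worth being precise about where it lies. The paper does not smooth $F$ at all; it works directly with the viscosity solution and proves (Theorem \ref{u-v}, Theorem \ref{muuv}, Corollary \ref{quot}) that the second-order incremental quotient $\Delta^2_{he}u=\frac{u(x+he)+u(x-he)-2u(x)}{h^2}$ belongs to $\underline{\mathcal{S}}(\lambda/n,\Lambda,0)$. The mechanism is that $\tfrac12\bigl(u(x+he)+u(x-he)\bigr)$ is a convex combination of two viscosity subsolutions, and \emph{convex combinations of viscosity subsolutions of a quasiconcave equation are again subsolutions}: this is proved via sup-convolutions, the ABP/Jensen touching argument to find a point of punctual second-order differentiability, and the Jensen-type inequality $F(\theta M_1+(1-\theta)M_2)\geq\min\{F(M_1),F(M_2)\}$ for quasiconcave $F$. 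Your second route invokes ``the viscosity analog of Lemma \ref{Lue}'' as if it were available, but Lemma \ref{Lue} is proved by differentiating the equation twice, which is meaningless for a merely continuous viscosity solution; the convex-combination argument just described is precisely the substitute, and it is the technical heart of the theorem. Once $\Delta^2_{he}u\in\underline{\mathcal{S}}(\lambda/n,\Lambda,0)$ is known, the paper does not run a Bernstein computation either: it applies the local maximum principle (Theorem 4.8(2) of \cite{CC}) to get a one-sided bound on $\Delta^2_{he}u$, and obtains the complementary bound from the existence of a supporting hyperplane above the graph of $F$ at $0$ (Hahn--Banach applied to the convex epigraph-type set $\{(M,\xi):F(M)>\xi\}$), which makes $u$ a viscosity subsolution of a linear equation; this replaces the use of the equation and Lemma \ref{twoside} in the smooth Bernstein proof.

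Your first route (smooth $F$, solve the Dirichlet problem, apply Theorem \ref{C11}, pass to the limit) has an unresolved obstruction that you yourself flag: mollification destroys quasiconcavity, and the superlevel-set regularization you sketch must simultaneously keep the smoothed sets nested, reassemble them into a function, and preserve the quantitative ellipticity bounds $\lambda\|N\|\leq F(M+N)-F(M)\leq\Lambda\|N\|$, none of which is addressed. In addition, the continuity method for the regularized Dirichlet problem needs global (up to the boundary) a priori estimates, and the concavified operator $G_\varepsilon=\psi_\varepsilon\circ F_\varepsilon$ has ellipticity constants degenerating with $\|D^2u_\varepsilon\|$, so the existence step is not routine. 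As written, neither route constitutes a proof; supplying the convex-combination lemma for viscosity subsolutions (the paper's Theorem \ref{muuv}) is the missing ingredient that would make the second route work.
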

Following the path outlined in \cite{CC}, the proof of this result requires to show that second order differential quotients belong to $\underline{\mathcal{S}}(\lambda,\Lambda,0)$ when $F$ is quasiconcave. This is a consequence of the following two results (note that only the second one requires concavity assumptions on $F$)
\begin{thm}\label{u-v}
Let $F$ be uniformly elliptic, $u$ be viscosity subsolution to \eqref{eqell} in $\Omega$ and $v$ be a viscosity supersolution of \eqref{eqell} in $\Omega$. Then
\[
u-v\in\underline{\mathcal{S}}(\lambda/n,\Lambda)\text{ in }\Omega.
\] 
\end{thm}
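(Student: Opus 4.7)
The plan is to reduce the assertion to the pointwise algebraic inequality
\[
F(M_1)-F(M_2)\leq \mathcal{M}^+_{\lambda/n,\Lambda}(M_1-M_2),\qquad M_1,M_2\in\mathcal{S}_n,
\]
and then to promote the resulting pointwise bound, first obtained a.e.\ on smooth approximations, to the viscosity subsolution property for the difference $w:=u-v$. Concavity of $F$ plays no role here, only the uniform ellipticity \eqref{uell}.

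First I would establish the algebraic inequality above. Writing $N=M_1-M_2=N^+-N^-$ with $N^\pm\geq0$ in the matrix sense, telescoping yields
\[
F(M_1)-F(M_2)=\bigl[F(M_2-N^-+N^+)-F(M_2-N^-)\bigr]+\bigl[F(M_2-N^-)-F(M_2)\bigr],
\]
and \eqref{uell} applied to each bracket gives $F(M_1)-F(M_2)\leq \Lambda\|N^+\|-\lambda\|N^-\|$. Using the elementary bounds $\|N^+\|\leq \Tr(N^+)$ and $\|N^-\|\geq \Tr(N^-)/n$, valid for positive semidefinite matrices, one obtains $F(M_1)-F(M_2)\leq \Lambda\Tr(N^+)-\frac{\lambda}{n}\Tr(N^-)$, which is exactly $\mathcal{M}^+_{\lambda/n,\Lambda}(M_1-M_2)$ by the definition of the Pucci operator.

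Next I would regularize. Following the standard scheme in Chapter~5 of \cite{CC}, for $\epsilon>0$ define the sup- and inf-convolutions
\[
u^\epsilon(x)=\sup_{y\in\overline{\Omega}}\Bigl\{u(y)-\tfrac{|x-y|^2}{\epsilon}\Bigr\},\qquad v_\epsilon(x)=\inf_{y\in\overline{\Omega}}\Bigl\{v(y)+\tfrac{|x-y|^2}{\epsilon}\Bigr\}.
\]
Then $u^\epsilon$ is semiconvex and a viscosity subsolution of $F(D^2u^\epsilon)\geq0$ in a slightly smaller domain $\Omega_\epsilon\subset\Omega$, $v_\epsilon$ is semiconcave and a viscosity supersolution of $F(D^2v_\epsilon)\leq0$ in $\Omega_\epsilon$, and $u^\epsilon\to u$, $v_\epsilon\to v$ locally uniformly as $\epsilon\to0^+$. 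By Alexandrov's theorem both $u^\epsilon$ and $v_\epsilon$ admit pointwise Hessians a.e. At any such point $x\in\Omega_\epsilon$, the semiconvexity of $u^\epsilon$ (resp.\ semiconcavity of $v_\epsilon$) allows one to use the pointwise Taylor expansion as a legal touching test function, giving $F(D^2u^\epsilon(x))\geq0$ and $F(D^2v_\epsilon(x))\leq0$ a.e. Combined with the algebraic inequality of the previous paragraph, this yields
\[
\mathcal{M}^+_{\lambda/n,\Lambda}\bigl(D^2u^\epsilon(x)-D^2v_\epsilon(x)\bigr)\geq F(D^2u^\epsilon(x))-F(D^2v_\epsilon(x))\geq 0 \quad\text{a.e.\ in }\Omega_\epsilon.
\]

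Finally I would upgrade this almost-everywhere inequality to a viscosity subsolution property for $w^\epsilon:=u^\epsilon-v_\epsilon$, and then pass to the limit. Since $w^\epsilon$ is the difference of a semiconvex and a semiconcave function, its superdifferential structure together with the Jensen-type approximation (punctual perturbation) argument of \cite[Lemma 2.10 and Proposition 2.13]{CC} shows that the a.e.\ Pucci inequality forces $w^\epsilon\in\underline{\mathcal{S}}(\lambda/n,\Lambda)$ in the viscosity sense in $\Omega_\epsilon$. Stability of the Pucci class under locally uniform convergence then gives $w=u-v\in\underline{\mathcal{S}}(\lambda/n,\Lambda)$ in $\Omega$. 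The main technical obstacle is this last a.e.-to-viscosity upgrade, which depends crucially on the semiconvex/semiconcave regularity of the approximants: one needs Jensen's perturbation lemma to ensure that a test paraboloid touching $w^\epsilon$ from above can be shifted so as to simultaneously admit twice-differentiability points of $u^\epsilon$ and $v_\epsilon$ arbitrarily close to the contact point.
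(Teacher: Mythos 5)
Your argument is correct and coincides with the paper's approach: the paper simply invokes Theorem 5.3 of \cite{CC}, and what you have written out (the algebraic inequality $F(M_1)-F(M_2)\leq\mathcal{M}^+_{\lambda/n,\Lambda}(M_1-M_2)$ via the decomposition $N=N^+-N^-$, followed by sup-/inf-convolution, Alexandrov differentiability and Jensen's perturbation lemma) is precisely the proof of that cited result. No gaps; only the internal reference numbers to \cite{CC} should be double-checked.
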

\begin{proof}
This is proved in Theorem 5.3 of \cite{CC}. Alternatively, one can proceed using the doubling variable argument \cite{CIL,ImbertSilvestre} and exploit the uniform ellipticity.
\end{proof}
\begin{cor}
Let $u$ be a viscosity solution to \eqref{eqell}. Let $h>0$ and $e\in\R^n$ with $|e|=1$. Then
\[
u(x+he)-u(x)\in\mathcal{S}(\lambda/n,\Lambda)=\underline{\mathcal{S}}(\lambda/n,\Lambda,0)\cap \overline{\mathcal{S}}(\lambda/n,\Lambda,0)\text{ in }\Omega_h:=\{x\in\Omega:\mathrm{dist}(x,\partial\Omega)>h\}.
\]
\end{cor}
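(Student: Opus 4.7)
The plan is to exploit the translation invariance of the equation, together with Theorem \ref{u-v} applied twice, to obtain the two one-sided Pucci inequalities that together constitute membership in $\mathcal{S}(\lambda/n,\Lambda)$.

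First, I would observe that since $F=F(M)$ does not depend on $x$, the translated function $v_h(x):=u(x+he)$ is again a viscosity solution of $F(D^2v_h)=0$, now on the shifted open set $\Omega-he=\{x:x+he\in\Omega\}$. Consequently, on the common domain $\Omega_h=\Omega\cap(\Omega-he)=\{x\in\Omega:\mathrm{dist}(x,\partial\Omega)>h\}$, both $u$ and $v_h$ are simultaneously viscosity subsolutions and viscosity supersolutions of \eqref{eqell}.

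Next, I would apply Theorem \ref{u-v} twice on $\Omega_h$. Taking the subsolution to be $v_h$ and the supersolution to be $u$ gives
\[
v_h-u=u(\cdot+he)-u(\cdot)\in\underline{\mathcal{S}}(\lambda/n,\Lambda,0)\quad\text{in }\Omega_h.
\]
Swapping the roles, with $u$ as the subsolution and $v_h$ as the supersolution, yields
\[
u-v_h\in\underline{\mathcal{S}}(\lambda/n,\Lambda,0)\quad\text{in }\Omega_h.
\]
Since $w\in\underline{\mathcal{S}}(\lambda/n,\Lambda,0)$ if and only if $-w\in\overline{\mathcal{S}}(\lambda/n,\Lambda,0)$ (the Pucci operators satisfy $\mathcal{M}^+_{\lambda/n,\Lambda}(-M)=-\mathcal{M}^-_{\lambda/n,\Lambda}(M)$), the second inclusion is equivalent to $v_h-u\in\overline{\mathcal{S}}(\lambda/n,\Lambda,0)$. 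Combining the two, we conclude $u(\cdot+he)-u(\cdot)\in\mathcal{S}(\lambda/n,\Lambda)$ in $\Omega_h$, as desired.

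There is essentially no obstacle here: the whole content is Theorem \ref{u-v}, which already encodes the non-trivial viscosity-solution doubling/inf-convolution argument. The only point to keep in mind is that quasiconcavity of $F$ plays no role in this corollary — only uniform ellipticity and translation invariance are needed — so the statement is valid in full generality. Quasiconcavity will enter later, when one needs to upgrade from first-order to \emph{second-order} incremental quotients belonging to $\underline{\mathcal{S}}(\lambda,\Lambda,0)$, which is the step where convexity of the superlevel sets of $F$ is essential.
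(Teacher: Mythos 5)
Your proposal is correct and follows the same route as the paper: translation invariance makes $u(\cdot+he)$ a viscosity solution on $\Omega_h$, and two applications of Theorem \ref{u-v} (with the roles of sub- and supersolution swapped) give the two one-sided Pucci inclusions. Your remark that quasiconcavity is not needed here is also consistent with the paper, which only invokes it at the level of second-order quotients.
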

\begin{proof}
This is a consequence of Theorem \ref{u-v} and the fact that $v(x)=u(x+he)$ is a viscosity solution to $F(D^2u)=0$ in $\Omega_h$.
\end{proof}
The next result exploits quasiconcavity properties on $F$ and shows that any convex combination of two subsolutions is still a subsolution.
\begin{thm}\label{muuv}
Let $F$ be uniformly elliptic, continuous and quasiconcave, and let $u,v$ be viscosity subsolutions to \eqref{eqell}. Then
\[
w=\theta u +(1-\theta)v,\theta\in(0,1),
\]
is a viscosity subsolution to $F(D^2u)=0$ in $B_1$.
\end{thm}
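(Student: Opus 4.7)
The plan is to reduce the statement to the classical pointwise inequality that quasiconcavity gives. Formally, if $u,v\in C^2$ were classical subsolutions with $F(D^2u)\geq 0$ and $F(D^2v)\geq 0$, then for $w=\theta u+(1-\theta)v$ one would have $D^2w=\theta D^2u+(1-\theta)D^2v$ lying in the superlevel set $\{M:F(M)\geq 0\}$, which is convex by quasiconcavity; hence $F(D^2w)\geq \min\{F(D^2u),F(D^2v)\}\geq 0$. The real work is to make this argument survive at the viscosity-solution level when $u,v$ are merely continuous.

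The natural route is sup-convolution regularization. Set
\[
u^\varepsilon(x):=\sup_{y}\Bigl\{u(y)-\tfrac{1}{2\varepsilon}|x-y|^2\Bigr\},\qquad v^\varepsilon(x):=\sup_y\Bigl\{v(y)-\tfrac{1}{2\varepsilon}|x-y|^2\Bigr\},
\]
so that $u^\varepsilon,v^\varepsilon$ are semiconvex (hence twice differentiable a.e.\ by Alexandrov's theorem), they are viscosity subsolutions of $F(D^2\cdot)=0$ on a slightly shrunken ball $B_{1-r(\varepsilon)}$ (with $r(\varepsilon)\to 0$), and $u^\varepsilon\searrow u$, $v^\varepsilon\searrow v$ locally uniformly as $\varepsilon\to 0$; this is a standard fact for uniformly elliptic equations (see e.g.\ Chapter 5 of \cite{CC} or \cite{CIL}). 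Because they are a.e.\ twice differentiable and are viscosity subsolutions, at each point of twice differentiability they satisfy $F(D^2u^\varepsilon)\geq 0$ and $F(D^2v^\varepsilon)\geq 0$ in the pointwise sense; set $w^\varepsilon:=\theta u^\varepsilon+(1-\theta)v^\varepsilon$. Then $w^\varepsilon$ is semiconvex and twice differentiable a.e., and at any such point quasiconcavity of $F$ gives
\[
F(D^2 w^\varepsilon)=F\bigl(\theta D^2 u^\varepsilon+(1-\theta)D^2 v^\varepsilon\bigr)\geq \min\{F(D^2 u^\varepsilon),F(D^2 v^\varepsilon)\}\geq 0
\]
almost everywhere.

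The remaining step is to promote this a.e.\ inequality to a viscosity subsolution property for $w^\varepsilon$. This is the classical Jensen argument for semiconvex functions: if $\varphi\in C^2$ touches $w^\varepsilon$ from above at $x_0$, one may assume the touching is strict by adding $|x-x_0|^4$; then Jensen's lemma provides a sequence $x_k\to x_0$ at which $w^\varepsilon$ is twice differentiable and at which a small linear perturbation of $\varphi$ still touches $w^\varepsilon$ from above, so $D^2w^\varepsilon(x_k)\leq D^2\varphi(x_k)+o(1)$. Combining with $F(D^2w^\varepsilon(x_k))\geq 0$ a.e., the uniform ellipticity (monotonicity) \eqref{uell} and the continuity of $F$ yield $F(D^2\varphi(x_0))\geq 0$. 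Thus $w^\varepsilon$ is a viscosity subsolution of $F(D^2\cdot)=0$ on $B_{1-r(\varepsilon)}$. Since $w^\varepsilon\to w$ locally uniformly, the standard stability of viscosity subsolutions under uniform convergence gives that $w$ is a viscosity subsolution in $B_1$, as desired. The principal obstacle is the a.e.-to-viscosity upgrade in the middle step; once one invokes Jensen's lemma together with the continuity and ellipticity of $F$ this is routine, and the rest of the argument is pure bookkeeping of the sup-convolution machinery.
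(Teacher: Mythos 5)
Your proposal is correct and follows essentially the same route as the paper's proof: regularize by sup-convolution, apply the quasiconcavity (Jensen) inequality at the a.e.\ points of punctual second-order differentiability, upgrade to the viscosity sense via Jensen's lemma (the paper phrases this step through the ABP-based Lemma 3.5 of \cite{CC}, which is the same device), and conclude by stability under locally uniform convergence. No gaps.
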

\begin{proof}
The proof is easy when $u,v$ are classical subsolutions. Indeed, by the Jensen's inequality for quasiconcave functions it follows that
\[
F\left(D^2\left(\theta u+(1-\theta)v\right)\right)\geq \min\{F(D^2u),F(D^2v)\}\geq0.
\]
In the general case the proof is a consequence of the following fact: if $u^\eps$ denotes the sup-convolution of $u$ and $u_\eps$ the inf-convolution, and also $u$ is a viscosity subsolution to $F(D^2u)=0$, then for $\delta>0$ small enough $(u^{\eps+\delta})_\delta\in C^{1,1}$ and is a viscosity subsolution of the same equation, so that one can proceed as in the smooth setting, see e.g. Proposition 4.6 and Remark 4.8 of \cite{CKSS}. Alternatively, one can proceed following the same steps of Theorem 5.8 of \cite{CC}: it is enough to see that $w=\theta u^\eps +(1-\theta)v^\eps$, $\mu\in(0,1)$, is a viscosity subsolution to $F(D^2w)=0$. Let $p$ be a paraboloid touching the graph of $\theta u^\eps +(1-\theta)v^\eps$ from above at $x_0$. We prove that $F(D^2p)\geq0$. Define
\[
z(x)=p(x)+\delta|x-x_0|^2-\delta r^2-(\theta u^\eps +(1-\theta)v^\eps).
\]
A standard argument that uses the ABP maximum principle (using Lemma 3.5 in \cite{CC} and arguing as in Theorem 5.3 of \cite{CC}) implies that there exists $x_1\in B$ such that  $u^\eps,v^\eps$ and $z$ are punctually second order differentiable at $x_1$, and
\begin{equation}\label{test}
D^2\left(p+\delta|x-x_0|^2-(\theta u^\eps +(1-\theta)v^\eps)\right)(x_1)\geq0.
\end{equation}
Moreover, $F(D^2u^\eps)\geq0$ and $F(D^2v^\eps)\geq0$. Since $F$ is quasiconcave we have
\[
F\left(D^2\left(\theta u^\eps +(1-\theta)v^\eps\right)\right)\geq \min\{F(D^2u^\eps),F(D^2v^\eps)\}\geq0
\]
This implies by \eqref{test} and the uniform ellipticity (actually, degenerate ellipticity is enough) that
\[
F(D^2p+2\delta I_n)\geq0
\]
Letting $\delta\to0$, since $F$ is continuous, we get $F(D^2p)\geq0$.
\end{proof}
The next corollary shows what we already observed for smooth functionals  in Lemma \ref{Lue}: the pure second derivatives $u_{ee}$ (or better the second order differential quotients) are a subsolution to a linear equation in nondivergence form (resp. a Pucci's extremal equation).
\begin{cor}\label{quot}
Let $F$ be quasiconcave and $u$ be a continuous viscosity solution to \eqref{eqell}. Then 
\[
\Delta_{he}^2u=\frac{u(x+he)+u(x-he)-2u(x)}{h^2}\in\underline{\mathcal{S}}(\lambda/n,\Lambda,0)\text{ in }B_\frac12.
\]
In particular, if $u\in C^{2}(Q_1)$, then
\[
u_{ee}\in \underline{\mathcal{S}}(\lambda/n,\Lambda,0)
\]
\end{cor}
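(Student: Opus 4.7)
The plan is to realize $\Delta_{he}^2 u$ as the difference of a carefully constructed viscosity subsolution and the solution $u$ itself (viewed as a supersolution), and then invoke Theorem \ref{u-v}. The quasiconcavity hypothesis enters only through Theorem \ref{muuv}, which guarantees that convex combinations of viscosity subsolutions of $F(D^2 w)=0$ remain viscosity subsolutions.

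Concretely, for $h>0$ small enough that $x\pm he\in B_1$ for every $x\in B_{1/2}$, I would first observe that both shifted functions $v_1(x):=u(x+he)$ and $v_2(x):=u(x-he)$ are viscosity solutions, hence subsolutions, of $F(D^2 w)=0$ on $B_{1/2}$. Applying Theorem \ref{muuv} with weight $\theta=1/2$, the average
\[
w(x) := \tfrac12 u(x+he) + \tfrac12 u(x-he)
\]
is a viscosity subsolution on $B_{1/2}$. Since $u$ is simultaneously a viscosity supersolution, Theorem \ref{u-v} applied to the pair $(w,u)$ yields $w - u \in \underline{\mathcal{S}}(\lambda/n,\Lambda,0)$ on $B_{1/2}$. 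Noting that $w-u = \tfrac{h^2}{2}\Delta_{he}^2 u$ and that $\underline{\mathcal{S}}(\lambda/n,\Lambda,0)$ is invariant under multiplication by positive constants (by the positive homogeneity of $\mathcal{M}^+_{\lambda/n,\Lambda}$), I then obtain $\Delta_{he}^2 u \in \underline{\mathcal{S}}(\lambda/n,\Lambda,0)$.

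For the second assertion, when $u\in C^2$ the symmetric second differences $\Delta_{he}^2 u$ converge locally uniformly to $u_{ee}$ as $h\to 0$. Since the class $\underline{\mathcal{S}}(\lambda/n,\Lambda,0)$ is closed under locally uniform convergence by the standard stability properties of viscosity subsolutions of the Pucci extremal equation, passing to the limit $h\to 0$ gives $u_{ee} \in \underline{\mathcal{S}}(\lambda/n,\Lambda,0)$. The only real obstacle is bookkeeping: checking that the domain shrinks consistently so that the shifts are well-defined, and verifying that Theorem \ref{muuv} applies with the legitimate value $\theta=1/2\in(0,1)$. All the substantive content is already packaged in Theorems \ref{u-v} and \ref{muuv}, so no additional structural argument is needed.
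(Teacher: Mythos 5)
Your proposal is correct and follows essentially the same route as the paper: write $\Delta_{he}^2u$ (up to the positive factor $2/h^2$) as $\frac12\bigl(u(x+he)+u(x-he)\bigr)-u(x)$, apply Theorem \ref{muuv} with $\theta=\frac12$ to see the average of translates is a subsolution, and then Theorem \ref{u-v} to the pair (subsolution, supersolution). Your added details on the positive homogeneity of $\mathcal{M}^+$ and the stability of $\underline{\mathcal{S}}(\lambda/n,\Lambda,0)$ under locally uniform convergence for the $C^2$ case are correct and merely make explicit what the paper leaves implicit.
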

\begin{proof}
We write for $\theta=\frac12$
\[
\frac{u(x+he)+u(x-he)-2u(x)}{2}=\frac12(u(x+he)+u(x-he))-u(x),
\]
which is by Theorem \ref{u-v} the difference of a viscosity subsolution and a supersolution of $F(D^2w)=0$. The result follows then from Theorem \ref{muuv}, since $\frac12(u(x+he)+u(x-he))$ is a viscosity subsolution.
\end{proof}
\begin{proof}[Proof of Theorem \ref{C11visc}]
The result follows from Corollary \ref{quot} and using statement (2) in Theorem 4.8 (the local maximum principle) of \cite{CC}. We detail part of the proof following the one in Theorem 6.6 in \cite{CC}. To do this, one observes that when $F$ is quasiconcave there exists a supporting hyperplane (by above) to the graph of $F$ at $0\in\mathcal{S}_n$. Indeed, it is enough to apply the Hahn-Banach theorem to the open convex set
\[
\{(M,\xi)\in \mathcal{S}_n\times\R:F(M)>\xi\}
\]
(recall that $F$ is a quasiconcave function when the superlevel sets are convex). This implies that there exists a linear functional $L$ on the space of symmetric matrices, i.e. of the form $L(M)=\mathrm{Tr}(AM)$ for some constant coefficient matrix $A$ such that $\lambda I_n\leq A\leq\Lambda I_n$ (that we may assume to be the Laplacian by Lemma 6.1 in \cite{GT}) such that $L(0)=0$ and $L(M)\geq F(M)$ for any $M\in\mathcal{S}_n$.\\
Therefore, since $F(D^2u)=0$ in the viscosity sense in $B_1$ and $\Delta \varphi=L(D^2\varphi)\geq F(D^2\varphi)$ for any $\varphi\in C^2$, we immediately see that $u$ solves $\Delta u\geq0$ in the viscosity sense in $B_1$. Then, we can continue the proof with the program outlined in \cite{CC} to prove the $C^{1,1}$ regularity estimate, see also \cite{CCtop}, exploiting now Corollary \ref{quot} valid for quasiconcave operators instead of the more classical Corollary 5.10 in \cite{CC}.
\end{proof}
We now state the following extension of Theorem 6.1 in \cite{CC}.
\begin{thm}\label{EKvisc}
Let $F$ be quasiconcave and let
\[
u\in C^{1,1}(B_1)\text{ satisfy }F(D^2u)=0\text{ in }B_1.
\]
Then $u\in C^{2,\alpha}(B_\frac12)$ and
\[
\|u\|_{C^{2,\alpha}(B_\frac12)}\leq C,
\]
where $\alpha\in(0,1)$ and $C$ are universal constants. Here $C$ depends on $\|u\|_{C^{1,1}(B_{\frac34})}$.
\end{thm}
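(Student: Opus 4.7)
My approach is to adapt the proof of the classical viscosity Evans--Krylov estimate (Theorem 6.1 in~\cite{CC}) to the quasiconcave setting, with Corollary~\ref{quot} playing the role of the concavity-based subsolution property that drives the standard argument.

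First, fix $x_0 \in B_{1/2}$ and a small $r > 0$. For each unit direction $e \in \R^n$ and each admissible $h>0$, set $w_{h,e}(x) := \Delta_{he}^2 u(x)$. By Corollary~\ref{quot}, $w_{h,e} \in \underline{\mathcal{S}}(\lambda/n, \Lambda, 0)$ on $B_{2r}(x_0) \subset B_{1/2}$, and the family $\{w_{h,e}\}$ is uniformly bounded by $\|u\|_{C^{1,1}(B_{3/4})}$. Consequently, $c_{h,e} - w_{h,e}$, where $c_{h,e} := \sup_{B_{2r}(x_0)} w_{h,e}$, is a non-negative element of $\overline{\mathcal{S}}(\lambda/n, \Lambda, 0)$, to which one applies the weak Harnack inequality (statement (2) in Theorem 4.8 of~\cite{CC}) to obtain an $L^\eps$-type estimate controlling $c_{h,e} - w_{h,e}$ on $B_r(x_0)$ in terms of its deficit from the sup on a set of positive measure.

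Second, to upgrade the one-directional oscillation estimates to a two-sided control on $D^2 u$, I would exploit the equation $F(D^2 u)=0$ combined with Lemma~\ref{twoside}. At any two points $x, y$ of pointwise second differentiability of $u$ in $B_{1/2}$ (a full-measure set by Alexandrov's theorem, since $u \in C^{1,1}$), the Hessians $D^2 u(x)$ and $D^2 u(y)$ both lie on $\{F=0\}$, and Lemma~\ref{twoside} bounds $\|D^2u(x)-D^2u(y)\|$ by $\sup_{|e|=1}(u_{ee}(x)-u_{ee}(y))^+$ up to the universal constant $(\Lambda+\lambda)/\lambda$. Covering the unit sphere by a finite net, running the previous Harnack step for each direction in the net, and combining with Lemma~\ref{twoside} as in~\cite[Sec. 6.2]{CC} delivers the oscillation decay $\osc_{B_r(x_0)} D^2 u \le C r^\alpha$ in $B_{1/2}$, which is exactly the $C^{2,\alpha}$ estimate.

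The main obstacle is handling the supremum $\sup_{|e|=1}$ over the unit sphere and merging the one-directional Harnack estimates into a two-sided control: in the concave case of~\cite{CC}, concavity of $F$ yields a matching lower bound on $u_{ee}$ directly via the linearized equation. In the quasiconcave setting considered here, the analogous lower bound is supplied by the combination of Lemma~\ref{twoside} (the zero-level-set geometric constraint) and Corollary~\ref{quot} (the subsolution property of $\Delta_{he}^2 u$), which together serve as the quasiconcavity-compatible substitute for the concavity argument of Caffarelli--Cabr\'e; otherwise the iteration on dyadic scales proceeds verbatim.
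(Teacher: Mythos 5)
Your argument is correct, but it is not the route the paper takes. The paper proves Theorem \ref{EKvisc} by composing $F$ with the increasing map $\psi(r)=1-e^{-kr}$ from Theorem \ref{EvansKrylov}, so that $u$ solves the \emph{concave} equation $G(D^2u)=\psi(F(D^2u))=0$ in the viscosity sense, and then invokes the concave Evans--Krylov theorem for viscosity solutions (Theorem 6.1 of \cite{CC}) as a black box. You instead re-run the internals of that theorem: the only place concavity enters the Caffarelli--Cabr\'e argument is in showing that the second-order incremental quotients $\Delta^2_{he}u$ are subsolutions, and you correctly substitute Corollary \ref{quot} for that step, while Lemma \ref{twoside} (which needs only uniform ellipticity) and the weak Harnack inequality supply the remaining ingredients for the oscillation-decay iteration over a finite net of directions. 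The two routes buy different things. The transformation route is shorter on paper, but it needs $\psi\circ F$ to actually be concave, which the paper obtains from the second-order characterization of Lemma \ref{condsigma} and hence really leans on $F\in C^2$ (a merely continuous quasiconcave function need not be an increasing transform of a concave one), and it yields ellipticity constants for $G$ tied to the range of $D^2u$. Your route avoids both issues, since Corollary \ref{quot} is established for continuous quasiconcave $F$ via sup-/inf-convolutions, so it is arguably the more robust argument and closer in spirit to how the paper handles the $C^{1,1}$ step in Theorem \ref{C11visc}. Two small corrections: the weak Harnack inequality for nonnegative supersolutions is statement (1) of Theorem 4.8 in \cite{CC} (statement (2) is the local maximum principle, which is what the paper uses for the $C^{1,1}$ bound); and when you invoke Lemma \ref{twoside} at Alexandrov points you should record that a $C^{1,1}$ viscosity solution satisfies $F(D^2u(x))=0$ pointwise at every point of punctual second-order differentiability, so that both Hessians genuinely lie on the zero level set of $F$.
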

\begin{proof}
Having the $C^{1,1}$ estimates of Theorem \ref{C11visc} at our disposal, we can exploit the increasing exponential transformation of Theorem \ref{EvansKrylov}. In this case, one can check that if $u$ solves $F(D^2u)=0$ in the viscosity sense, then $u$ solves in the viscosity sense the concave equation $G(D^2u)=\psi(F(D^2u))=0$. Also, since $\psi$ is increasing and $u\in C^{1,1}$, then $G$ continues to be uniformly elliptic (where now the ellipticity constants do not depend on $\|u\|_{C^{1,1}}$ by Theorem \ref{C11visc}). Therefore, the result follows from the regularity estimate in Theorem 6.1 in \cite{CC} stated for viscosity solutions.
\end{proof}
We are now ready for the proof of the full $C^{2,\alpha}$ estimate for viscosity solutions
\begin{proof}[Proof of Theorem \ref{EKvisc}]
The proof follows combining Theorem \ref{C11visc} with Theorem \ref{EKvisc}.
\end{proof}
\begin{rem}
The same results in Corollaries \ref{lioell1} and \ref{lioell} hold for viscosity solutions using the regularity estimate in Theorem \ref{mainell}.
\end{rem}
\subsection{Estimates for more general equations by perturbation}\label{sec;pert}
As a byproduct of a perturbation argument of L. Caffarelli we get the following interior pointwise $C^{2,\alpha}$ estimate for equations with $C^\alpha$ right-hand side.
\begin{thm}
Let $F=F(x,M)$ be uniformly elliptic, quasiconcave, continuous in $x$ and such that $F(0,0)=f(0)=0$. Suppose that $\bar\alpha\in(0,1)$ is the constant of the $C^{2,\bar\alpha}$ estimate for the homogeneous equation of Theorem \ref{mainell}. Assume also that $\alpha\in(0,\bar\alpha)$, $r_0>0$, $C_1,C_2>0$,
\[
\left(\frac{1}{|B_r|}\int_{B_r}\tilde{\beta}^n\right)^\frac1n\leq C_1r_0^{-\alpha}r^\alpha,\ \left(\frac{1}{|B_r|}\int_{B_r}|f|^n\right)^\frac1n\leq C_2r_0^{-\alpha}r^\alpha,\ \forall r\leq r_0.
\]
where
\[
\tilde\beta(x)=\sup_{M\in\mathcal{S}_n}\frac{|F(x,M)-F(0,M)|}{\|M\|+1}
\]
and $u$ be a viscosity solution of 
\[
F(x,D^2u)=f(x)\text{ in }B_{r_0}.
\]
Then $u\in C^{2,\alpha}$ at the origin.
\end{thm}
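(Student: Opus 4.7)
The plan is to follow Caffarelli's classical perturbation scheme (see Section 8.1 of \cite{CC}), with the sole modification that Theorem \ref{mainell} now plays the role of the concavity-based $C^{2,\bar\alpha}$ estimate (Theorem 6.6 of \cite{CC}) whenever the frozen-coefficient homogeneous equation $F(0,\cdot)=0$ is invoked. The target is to construct, at geometrically decreasing scales $r_k=\mu^k$ for some universal $\mu\in(0,1)$, quadratic polynomials $P_k$ with $F(0,D^2P_k)=0$ satisfying $|u-P_k|_{0;B_{r_k}}\leq r_k^{2+\alpha}$ and with geometrically decaying increments $|P_{k+1}-P_k|$ on $B_{r_k}$. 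Passing to the limit of $\{P_k\}$ (which is Cauchy thanks to the decay of the increments) yields a quadratic polynomial $P$ such that $|u(x)-P(x)|\leq C|x|^{2+\alpha}$ in a neighborhood of $0$, which is precisely pointwise $C^{2,\alpha}$ regularity at the origin.

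The first ingredient is an approximation lemma: for every $\eta>0$ there exist $\delta>0$ and $r>0$ (depending only on $\eta$ and universal data) so that if $u$ is a viscosity solution of $F(x,D^2u)=f$ in $B_r$ with $|u|_{0;B_r}\leq 1$, $\left(\frac{1}{|B_r|}\int_{B_r}\tilde\beta^n\right)^{1/n}\leq\delta$ and $\left(\frac{1}{|B_r|}\int_{B_r}|f|^n\right)^{1/n}\leq\delta$, then there is a viscosity solution $v$ of $F(0,D^2v)=0$ in $B_{r/2}$ with $|u-v|_{0;B_{r/2}}\leq\eta$. One constructs $v$ by solving the Dirichlet problem for the homogeneous equation on $B_{r/2}$ with boundary datum $u|_{\partial B_{r/2}}$ (Perron's method applies thanks to uniform ellipticity and comparison); the difference $w=u-v$ then lies, by Theorem \ref{u-v}, in the Pucci class $\underline{\mathcal{S}}(\lambda/n,\Lambda,g)$ with $|g(x)|\leq|f(x)|+\tilde\beta(x)(|D^2v|_{0;B_{r/2}}+1)$, and symmetrically in $\overline{\mathcal{S}}(\lambda/n,\Lambda,-g)$. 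Since Theorem \ref{C11visc} gives a universal $C^{1,1}$ bound for $v$ in terms of $|v|_{0;B_{r/2}}$ (itself bounded by ABP and the boundary datum), the Hessian of $v$ is under control, so the ABP maximum principle forces $|w|_{0;B_{r/2}}\leq C\|g\|_{L^n(B_{r/2})}\leq C\delta$, which is $\leq\eta$ for $\delta$ small.

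With the approximation lemma in place, the iterative step is standard. After normalizing so that $|u|_{0;B_1}\leq 1$ and the smallness hypotheses are met at scale $1$, apply the lemma to obtain $v$ with $|u-v|_{0;B_{1/2}}\leq\eta$; by Theorem \ref{mainell}, $v$ admits a quadratic approximation $P_1$ with $F(0,D^2P_1)=0$ and $|v-P_1|_{0;B_\mu}\leq C\mu^{2+\bar\alpha}$. Choose first $\mu\in(0,1)$ so small that $C\mu^{2+\bar\alpha}\leq\tfrac12\mu^{2+\alpha}$ (using the gap $\bar\alpha>\alpha$), and then $\eta\leq\tfrac12\mu^{2+\alpha}$, which in turn fixes $\delta$ via the approximation lemma; combining, $|u-P_1|_{0;B_\mu}\leq\mu^{2+\alpha}$. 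Rescaling $\tilde u(x):=\mu^{-(2+\alpha)}(u(\mu x)-P_1(\mu x))$ gives a function solving an equation $\tilde F(x,D^2\tilde u)=\tilde f(x)$ of the same type on $B_1$, and the geometric decay hypotheses $r^\alpha$ on $\tilde\beta$ and $|f|$ are precisely what is required to propagate the smallness conditions at the new scale. Iteration produces $\{P_k\}$ with the desired properties.

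The main obstacle is the approximation lemma, and within it the quantitative control of $|D^2v|_{0;B_{r/2}}$ that enters the ABP bound on $w$. This is where the quasiconcavity assumption enters nontrivially: the $C^{1,1}$ estimate of Theorem \ref{C11visc} hinges on the convexity of superlevel sets (via Theorem \ref{muuv} and Corollary \ref{quot}), without which one cannot assert that the approximating $v$ has a bounded Hessian and the error bound on $w$ would be useless. Once the approximation lemma is established, the remainder of the proof is mechanical and tracks the scheme of Theorem 8.1 in \cite{CC} essentially verbatim, with Theorem \ref{mainell} replacing the corresponding $C^{2,\bar\alpha}$ input.
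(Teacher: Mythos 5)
Your proposal is correct and follows the same route as the paper: the paper's proof is simply the citation of Caffarelli's perturbation theorem (Theorem 8.1 in \cite{CC}) with Theorem \ref{mainell} supplying the required $C^{2,\bar\alpha}$ interior estimate for the frozen homogeneous equation, and your argument is precisely an unpacking of that black box (approximation lemma plus iteration of quadratic polynomials $P_k$ with $F(0,D^2P_k)=0$). The only loose point is the ABP step in your approximation lemma, where $|D^2v|_{0;B_{r/2}}$ is controlled only on interior subsets of $B_{r/2}$ by Theorem \ref{C11visc}, so the $L^n$ bound on $g$ must be applied on a slightly smaller ball with the boundary layer handled by equicontinuity (or one proves the lemma by compactness, as in \cite{CC}); this is a standard technicality already resolved in the cited Theorem 8.1.
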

\begin{proof}
The result is a consequence of Theorem 8.1 in \cite{CC} and Theorem \ref{mainell}.
\end{proof}
Finally, the previous $C^{1,1}$ regularity estimates for the frozen equation $F(x_0,D^2u)=0$ imply $W^{2,q}$ estimates for $x$-dependent equations of the form $F(x,D^2u)=f(x)\in L^q$, $q>n-\eps$, where $\eps>0$ is a universal parameter depending on $n,\Lambda/\lambda$, again as a consequence of a result of L. Caffarelli \cite{CC} and L. Escauriaza \cite{Escauriaza}, cf. Theorem 7.1 and the subsequent Remark 3 in \cite{CC}. This is done by an assumption that measures the oscillation of $x$ in the critical space $L^n$, see \eqref{beta}.
\begin{thm}\label{czest}
Let $u$ be a bounded viscosity solution to
\[
F(x,D^2u)=f(x)\text{ in }B_1.
\]
Assume that $F(x,0)=0$ in $B_1$, $F$ continuous in $x$ and uniformly elliptic in $M\in\mathcal{S}_n$  and that $F(x_0,M)$ is quasiconcave in $M\in\mathcal{S}_n$ for any $x_0\in B_1$. Let also $c_e$ be the constant such that for any $x_0\in B_1$
\[
\|w\|_{C^{1,1}(\overline{B}_\frac12)}\leq c_{Z}.
\]
Let also $q$ be such that $\bar{q}(n,\lambda,\Lambda)<q<\infty$ and $f\in L^q(B_1)$, $\bar{q}$ being the Escauriaza's exponent. Then there exist positive constants $\beta_0$ and $C$ depending on $n,\lambda,\Lambda,c_Z$ and $q$ such that if
\begin{equation}\label{beta}
\left(|B_r(x_0)|^{-1}\int_{B_r(x_0)}\beta(x,x_0)^n\,dx\right)^\frac1n\leq\beta_0,
\end{equation}
\[
\beta(x,x_0)=\sup_{M\in\mathcal{S}_n/\{0\}}\frac{|F(x,M)-F(x_0,M)|}{\|M\|},
\]
for any ball $B_r(x_0)\subset B_1$, then $u\in W^{2,q}(B_\frac12)$ and 
\[
\|u\|_{W^{2,q}(B_\frac12)}\leq C(\|u\|_{L^\infty(B_1)}+\|f\|_{L^q(B_1)}).
\]

\end{thm}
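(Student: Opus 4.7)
The plan is to invoke the Calder\'on--Zygmund perturbation framework of L.~Caffarelli (Theorem 7.1 in \cite{CC} and the subsequent Remark 3), together with the Escauriaza refinement \cite{Escauriaza} which extends the integrability range down to exponents $q>\bar q(n,\lambda,\Lambda)$ possibly smaller than $n$. That abstract result takes as input three ingredients: (a) a uniform interior $C^{1,1}$ estimate, with constant $c_Z$ independent of $x_0\in B_1$, for viscosity solutions of the \emph{frozen} homogeneous problem $F(x_0,D^2w)=0$; (b) the smallness/decay condition \eqref{beta} on the $L^n$-averaged oscillation $\beta(x,x_0)$ of $F(\cdot,M)$ with respect to the spatial variable; (c) the normalization $F(x,0)=0$, so that the only inhomogeneity is $f$. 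Items (b) and (c) are hypotheses of the statement, so the only non-trivial task is to establish (a).

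Verifying (a) is where the work of the previous sections pays off. Fix $x_0\in B_1$ and consider any viscosity solution $w$ of $F(x_0,D^2w)=0$ in $B_1$. The frozen operator $M\mapsto F(x_0,M)$ is continuous in $M$, uniformly elliptic with constants $\lambda,\Lambda$, quasiconcave by hypothesis, and satisfies $F(x_0,0)=0$; hence Theorem \ref{C11visc} applies and yields
\[
[w]_{C^{1,1}(\overline{B}_{1/2})}\leq c_Z\,|w|_{0;B_1},
\]
for a constant $c_Z=c_Z(n,\lambda,\Lambda)$ that is \emph{universal} and in particular independent of the freezing point $x_0$. With this uniform frozen estimate in hand, all assumptions of Theorem 7.1 in \cite{CC}, strengthened by Escauriaza's \cite{Escauriaza} refinement of the Calder\'on--Zygmund decomposition, are met, and one concludes $u\in W^{2,q}(B_{1/2})$ with
\[
\|u\|_{W^{2,q}(B_{1/2})}\leq C\bigl(\|u\|_{L^\infty(B_1)}+\|f\|_{L^q(B_1)}\bigr),
\]
the constant $C$ depending on $n,\lambda,\Lambda,c_Z$ and $q$.

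The main obstacle is packaged inside the abstract perturbation machinery: one must iterate, at all dyadic scales, an approximation of $u$ by a second-order polynomial plus the solution of the frozen homogeneous problem, and show that the excess measured in $L^n$ decays geometrically; the rate of decay is controlled precisely by the $C^{1,1}$ bound of the frozen solution, while the error term is absorbed thanks to \eqref{beta} and to the $L^q$ norm of $f$ via Escauriaza's ABP-type estimate. The key point at our level is that the uniformity in $x_0$ of the constant $c_Z$, which we extracted from Theorem \ref{C11visc}, is exactly what makes this iteration robust across all base points. Once this uniformity is secured the rest is a verbatim application of the cited results, which is why the proof reduces to a reference to \cite{CC,Escauriaza} and Theorem \ref{C11visc}.
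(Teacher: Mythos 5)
Your proposal is correct and follows essentially the same route as the paper: the result is obtained by feeding the uniform (in $x_0$) interior $C^{1,1}$ estimate for the frozen quasiconcave equation, supplied by Theorem \ref{C11visc}, into the perturbation theorem of Caffarelli (Theorem 7.1 and Remark 3 in \cite{CC}) together with Escauriaza's refinement \cite{Escauriaza}. The only cosmetic difference is that the theorem's hypothesis states the frozen bound in normalized form $\|w\|_{C^{1,1}(\overline{B}_{1/2})}\leq c_Z$, while you write it with the factor $|w|_{0;B_1}$; this does not affect the argument.
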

\begin{rem}
As pointed out in \cite{C91}, one can allow discontinuous dependence on the operator $F$ with respect to $x$ and in the right-hand side $f$ by considering $L^p$-viscosity solutions. Results in this direction can be found in the papers \cite{CCKS,CKSpar,Koike,Pimentel}.
\end{rem}
\begin{rem}
We believe that the first- and second derivatives bounds obtained by the Bernstein method in Section \ref{sec;aprioriell} hold even for more general equations with lower order terms as in \cite{TrudingerTAMS} under natural structural conditions on $F=F(x,u,Du,D^2u)$. These typically hold when the growth in the gradient variable is at most quadratic. In fact, it is well-known that $C^{1,\alpha}$ estimates for such general equations hold without concavity assumptions and under much weaker regularity conditions than \cite{TrudingerTAMS}, cf. \cite{Nornberg,Swiech97}. Once $C^{1,\alpha}$ regularity is established, higher $C^{2,\alpha}$ estimates can be achieved by bootstrapping and exploiting the results of this and the previous sections by regarding the lower order terms as a right-hand side $g(x)$ for an equation of the form
\[
G(x,D^2u)=g(x).
\]
\end{rem}
\section{A priori $C^{2,\alpha}$ estimates for parabolic quasiconcave equations}\label{sec;parapriori}
We start with an extension of Theorem 6 in \cite{Andrews} for parabolic Hessian equations. We give here a slightly different proof resorting to an interpolation inequality of \cite{LSU}. Note that the next result holds for more general operators than those in \cite{Andrews}, where it was assumed that $F$ is quasilinear (i.e. both quasiconcave and quasiconvex). However, as already pointed out in \cite{Andrews}, here we cannot directly use a monotone transformation as in the stationary case to extract information from a concave parabolic equation. Indeed, the equation $F(D^2u)-\partial_tu=0$ is not the same as $\psi(F(D^2u))-\partial_t u=0$ and we need some extra work.
\begin{thm}\label{EKpar}
Let $F\in C^2$ be quasiconcave and uniformly parabolic, i.e.
\[
\lambda|\xi|^2\leq F_{ij}\xi_i\xi_j\leq\Lambda|\xi|^2.
\]
Let
\[
u\in C^{4,2}(Q_1)\text{ be a solution of }F(D^2u)-\partial_t u=0\text{ in }Q_1=B_1\times (-1,0].
\]
Then $u\in C^{2+\alpha,1+\frac{\alpha}{2}}(\overline{Q}_\frac12)$ and
\[
\|u\|_{C^{2+\alpha,1+\frac{\alpha}{2}}(\overline{Q}_\frac12)}\leq C,
\]
where $\alpha\in(0,1)$ and $C$ are universal constants, with $C$ depending also on $\|u\|_{C^{2,1}(\overline{Q}_1)}$.
\end{thm}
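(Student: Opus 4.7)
The plan is to regard the parabolic equation as a frozen-time family of elliptic equations with spatially Hölder right-hand side, apply the stationary theorem from Section \ref{sec;aprioriell} on each time slice, and then interpolate to restore the parabolic time regularity. The bridge between the two worlds is the Hölder continuity of $v := \partial_t u$, which follows by linearization and Krylov-Safonov. Throughout, the hypothesis $u \in C^{4,2}$ and the assumed control of $\|u\|_{C^{2,1}(\overline{Q}_1)}$ give us enough smoothness to differentiate and enough boundedness to close the estimates.

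\textbf{Step 1: Hölder regularity of $\partial_t u$.} Since $u \in C^{4,2}$, differentiating $F(D^2u) - \partial_t u = 0$ with respect to $t$ yields
\begin{equation*}
a_{ij}(x,t) \, \partial_{ij} v - \partial_t v = 0 \quad \text{in } Q_1, \qquad v := \partial_t u,
\end{equation*}
where $a_{ij}(x,t) = F_{ij}(D^2 u(x,t))$ is a bounded, measurable, uniformly parabolic coefficient field, $\lambda |\xi|^2 \leq a_{ij}\xi_i\xi_j \leq \Lambda |\xi|^2$. Hence $v$ is a classical solution of a linear uniformly parabolic equation in nondivergence form, and the Krylov-Safonov interior Hölder estimate gives some universal $\alpha \in (0,1)$ such that
\begin{equation*}
\|\partial_t u\|_{C^{\alpha,\alpha/2}(\overline{Q}_{3/4})} \leq C \, \|\partial_t u\|_{L^\infty(Q_1)} \leq C \, \|u\|_{C^{2,1}(\overline{Q}_1)}.
\end{equation*}

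\textbf{Step 2: Spatial $C^\alpha$ estimate of $D^2 u$ via the stationary theorem.} For each fixed $t \in (-3/4, 0]$, set $g(x,t) := \partial_t u(x,t)$; the function $w(x) := u(x,t)$ then solves the purely elliptic equation
\begin{equation*}
F(D^2 w(x)) = g(x,t) \quad \text{in } B_{3/4},
\end{equation*}
with $g(\cdot,t) \in C^\alpha(\overline{B}_{3/4})$, bounded uniformly in $t$ by Step 1. Applying the exponential transformation of Theorem \ref{EvansKrylov} (the additional right-hand side causes no trouble, since $\psi$ is fixed and $C^2$, so $\psi(g)$ remains $C^\alpha$) we reduce to a concave equation $G(D^2 w) = \psi(g(x,t))$ with uniformly elliptic $G$ whose constants depend only on $\|D^2 u\|_\infty \leq \|u\|_{C^{2,1}(\overline{Q}_1)}$. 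The Evans-Krylov estimate with Hölder right-hand side (e.g.\ Theorem 8.1 of \cite{CC}) then yields, uniformly in $t$,
\begin{equation*}
\|D^2 u(\cdot, t)\|_{C^\alpha(\overline{B}_{1/2})} \leq C.
\end{equation*}

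\textbf{Step 3: Interpolation to get time regularity of $D^2 u$.} Combining the uniform spatial Hölder control of $D^2 u$ from Step 2 with the full space-time Hölder bound of $\partial_t u$ from Step 1, a classical interpolation inequality (Chapter II, Section 3 of \cite{LSU}) transfers the time regularity of $\partial_t u$ to time regularity of $D^2 u$ with the parabolic exponent $\alpha/2$. This upgrades the spatial estimate to
\begin{equation*}
\|D^2 u\|_{C^{\alpha,\alpha/2}(\overline{Q}_{1/2})} + \|\partial_t u\|_{C^{\alpha,\alpha/2}(\overline{Q}_{1/2})} \leq C,
\end{equation*}
which is precisely the $C^{2+\alpha, 1+\alpha/2}$ estimate claimed.

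The main obstacle I expect is the clean application of the elliptic Evans-Krylov estimate with right-hand side on the transformed equation $G(D^2 w) = \psi(g)$: one must ensure that the ellipticity constants of $G$ (which in Theorem \ref{EvansKrylov} depend on $\|D^2 u\|_\infty$) and the $C^\alpha$ modulus of $\psi(g)$ remain bounded independently of the frozen time $t$. Both are controlled by the hypothesis $\|u\|_{C^{2,1}(\overline{Q}_1)} < \infty$ together with Step 1. The interpolation in Step 3 is standard but one must be careful to match the correct parabolic Hölder scales, which is why the author chooses to state the argument through the LSU interpolation rather than a direct reproof of Andrews' argument.
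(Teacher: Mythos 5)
Your proposal follows essentially the same route as the paper: linearize in $t$ and apply parabolic Krylov--Safonov to $\partial_t u$, freeze time and treat the equation as a quasiconcave elliptic problem with H\"older right-hand side (handled through the exponential transformation and Caffarelli's perturbation argument), then interpolate to recover the time regularity of $D^2u$. Steps 1 and 2 match the paper's proof. The one place where your write-up needs repair is Step 3: the LSU interpolation lemma (Lemma 3.1, p.~78) is not fed by the pair $\bigl(\partial_t u \in C^{\alpha,\alpha/2},\, D^2u(\cdot,t)\in C^{\iota}\bigr)$ as you state, but by the pair consisting of the \emph{time} modulus of $Du$ and the \emph{spatial} modulus of $D^2u=D(Du)$. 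The paper obtains the former by differentiating the equation in a spatial direction $e$, so that $u_e$ solves $F_{ij}(D^2u)\partial_{ij}u_e-\partial_t u_e=0$, and Krylov--Safonov gives $[Du]_{C^{\sigma,\sigma/2}}\leq C|Du|_{0;Q_1}$; applying the lemma to $Du$ then yields $|D^2u(x,t_1)-D^2u(x,t_2)|\leq C|t_1-t_2|^{\frac{\sigma\iota}{2(1+\iota)}}$. (Alternatively one can reach the time-H\"older bound on $Du$ from the boundedness of $\partial_t u$ and $D^2u$ by a first interpolation, but some such intermediate step is needed; the interpolation cannot act directly on $\partial_t u$.) Note also that the resulting time exponent is $\frac{\sigma\iota}{2(1+\iota)}$, not the matched $\alpha/2$ you claim; this is harmless, since one simply relabels $\alpha$ as the smallest of the exponents produced, but the final H\"older exponent is genuinely degraded by the interpolation.
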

\begin{proof}
The idea of the proof follows the observation of B. Andrews \cite{Andrews} in the two-dimensional case, see also the recent work \cite{SilvestreAnnali}. Here we simplify the argument using a space-time interpolation inequality. This was also used earlier by C. Guti\'errez and Q. Huang to prove a priori estimates for parabolic Monge-Amp\`ere equations \cite{GutierrezHuang}, cf. Theorem 2.3 therein. By the smoothness of $F$ in the range of $D^2u$ and differentiating in time, it is immediate to see that $\partial_t u$ solves the linear PDE
\[
F_{ij}(D^2u(x,t))\partial_{ij}(\partial_tu)-\partial_t (\partial_tu)=0.
\]
We are then in position to apply the parabolic Krylov-Safonov H\"older regularity result, cf. Theorem 7 in \cite{Andrews}, and obtain that $\partial_tu$ is space-time H\"older continuous with the following estimate
\[
[\partial_t u]_{C^{\gamma,\frac{\gamma}{2}}(Q_{\frac34})}\leq C|\partial_t u|_{0;Q_1}.
\]
To estimate $D^2u(x,t)$ in the $x$-variable we freeze the time variable and consider the parabolic problem as a stationary equation with $\partial_tu$ as a right-hand side. That is, $v(x)=u(x,t)$ solves
\[
G(D^2v(x))=\phi(x),\ G(D^2v(x))=F(D^2u(x,t)),\ \phi(x)=\partial_tu(x,t).
\]
By the elliptic result in Theorem \ref{mainell} (combined with the fact that $\partial_t u$ is H\"older continuous in space and it is thought as a H\"older continuous right-hand side of an elliptic problem) on each time slice we have $\|D^2u(\cdot,t)\|_{C^\iota(B_{1/2})}\leq C$ uniformly in $t$ for some positive constant $C$ (that depends on $|D^2u|_{0;B_1}$) and some $\iota\in(0,1)$. It remains to show that $D^2u$ is H\"older continuous in time. Differentiating the equation with respect to a unitary direction $e$ we have
\[
F_{ij}(D^2u(x,t))\partial_{ij}(\partial_e u)-\partial_t (\partial_e u)=0.
\]
As before, we obtain for some $\sigma\in(0,1)$
\[
[Du]_{C^{\sigma,\frac{\sigma}{2}}(Q_\frac12)}\leq C|Du|_{0;Q_1}.
\]
This implies that
\[
|Du(x,t_1)-Du(x,t_2)|\leq C|t_1-t_2|^{\frac{\sigma}{2}}
\]
and for fixed $t$ by the elliptic result we have
\[
|D^2u(x_1,t)-D^2u(x_2,t)|\leq C|x_1-x_2|^\iota,
\]
where the constant here depends in particular on $\|u\|_{C^{1,1}}$ and $|\partial_tu|_{0;Q_1}$, so on the full $C^{2,1}$ norm of $u$.
Using Lemma 3.1 p. 78 in \cite{LSU} applied to $Du$ we get that
\[
|D^2u(x,t_1)-D^2u(x,t_2)|\leq C|t_1-t_2|^{\frac{\sigma\iota}{2(1+\iota)}},
\]
which concludes the proof because of the equivalence (see e.g. p. 120 of \cite{KrylovBookHolder}) between the H\"older seminorm $[u]_{C^{\alpha,\beta}(Q_1)}$ defined with respect to the parabolic distance and
\[
[u]'_{C^{\alpha,\beta}(Q_1)}=\sup_{\substack{x,y\in B_1,\\ x\neq y}}\frac{|u(x,t)-u(y,t)|}{|x-y|^\alpha}+\sup_{\substack{t,s\in(-1,0),\\ t\neq s}}\frac{|u(x,t)-u(x,s)|}{|t-s|^\beta}.
\]
\end{proof}
\begin{rem}
The regularity requirement $u\in C^{4,2}$ can be weakened to $C^{2,1}$ using incremental quotients, see for instance \cite{FRRO} or Lemma 14.6 in \cite{Lieberman}.
\end{rem}
One can also reach $C^{\infty}$ regularity by classical Schauder estimates if $F$ is smooth and $u\in C^{2+\alpha,1+\frac{\alpha}{2}}$. Note that no concavity-type assumptions in the matrix variable are required to perform this step.
\begin{prop}\label{C2Cinfty}
Let $\alpha\in(0,1)$ and $u\in C^{2,1}(\Omega\times(t_1,t_2))$ be a solution of
\[
F(x,t,D^2u)-\partial_t u=f(x,t)\text{ in }\Omega\times(t_1,t_2).
\]
Assume that $F\in C^\infty(\Omega\times(t_1,t_2)\times\mathcal{S}_n)$ is uniformly elliptic and $f\in C^\infty(\Omega\times(t_1,t_2))$. Then $u\in C^\infty(\Omega\times(t_1,t_2))$.
\end{prop}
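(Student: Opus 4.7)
The plan is a standard parabolic Schauder bootstrap: regard the nonlinear equation as a linear non-divergence equation satisfied by successive derivatives of $u$, whose coefficients are smooth functions of $D^2u$, and at each round gain one additional derivative in the Hölder class. No quasiconcavity, convexity, or Evans-Krylov-type input is needed; the smoothness of $F$ does all the work.

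The first step is to place $u$ in a Hölder class $C^{2+\alpha,1+\alpha/2}_{\mathrm{loc}}$. If the statement is read as starting directly from $u\in C^{2+\alpha,1+\alpha/2}$ (as suggested by the paragraph preceding the proposition) this is automatic; otherwise, starting from $u\in C^{2,1}$, I would differentiate (or take incremental quotients of) the equation in a spatial direction $e_k$ to show that $v:=u_{x_k}$ solves the linear parabolic equation
\begin{equation*}
a_{ij}(x,t)\,v_{ij}-\partial_t v=f_{x_k}(x,t)-F_{x_k}(x,t,D^2u(x,t)),\qquad a_{ij}(x,t):=F_{M_{ij}}(x,t,D^2u(x,t)),
\end{equation*}
with continuous, uniformly elliptic coefficients and continuous right-hand side. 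Interior $W^{2,p}$ estimates for linear parabolic equations with continuous coefficients (see e.g.\ Chapter 7 of \cite{Lieberman}) give $v\in W^{2,p}_{\mathrm{loc}}$ for every $p<\infty$, whence Sobolev embedding yields $v\in C^{1+\alpha,(1+\alpha)/2}_{\mathrm{loc}}$ for some $\alpha\in(0,1)$, so $u\in C^{2+\alpha,1+\alpha/2}_{\mathrm{loc}}$; the analogous argument applied after differentiating in $t$ (or using the equation to trade $\partial_t u$ for second spatial derivatives) takes care of the time regularity.

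Once $u\in C^{m+\alpha,(m+\alpha)/2}_{\mathrm{loc}}$ with $m\ge 2$, the chain rule and the smoothness of $F$ and $f$ show that the coefficients $a_{ij}$ and the right-hand side of the linearised equation for $v=u_{x_k}$ belong to $C^{\alpha,\alpha/2}_{\mathrm{loc}}$; interior parabolic Schauder estimates (as in \cite{Lieberman,KrylovBookHolder}) then upgrade $v$ to $C^{2+\alpha,1+\alpha/2}_{\mathrm{loc}}$, so that $u\in C^{m+1+\alpha,(m+1+\alpha)/2}_{\mathrm{loc}}$. The same scheme applied to $\partial_t u$, which satisfies the analogous linear equation with source $f_t-F_t(x,t,D^2u)$, promotes time regularity in lock-step. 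Iterating the argument on $m$ yields $u\in C^{m+\alpha,(m+\alpha)/2}_{\mathrm{loc}}$ for every integer $m$, hence $u\in C^\infty(\Omega\times(t_1,t_2))$. The only mildly delicate point is the very first bootstrap step from merely continuous second derivatives to Hölder second derivatives, which rests on $W^{2,p}$ theory for linear parabolic equations with continuous (rather than Hölder) coefficients; every subsequent iteration is a routine application of Schauder.
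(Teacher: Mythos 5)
Your argument is exactly the one the paper invokes: the paper's proof simply cites Lemma 14.11 of \cite{Lieberman}, describing it as a combination of maximal $L^q$-regularity (to pass from continuous to H\"older continuous second derivatives via the linearised equation with merely continuous coefficients) and parabolic Schauder estimates for the subsequent bootstrap. Your write-up fills in the details of that same scheme, correctly identifying the first step as the only delicate one, so it matches the paper's approach.
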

\begin{proof}
The proof uses maximal $L^q$-regularity and Schauder estimates for linear parabolic equations with continuous or H\"older continuous coefficients, we refer to Lemma 14.11 in \cite{Lieberman} for details.
\end{proof}
We can now conclude the following result which, in the concave case, gives a weaker statement than those in \cite{Wang2}.
\begin{cor}
Let $F$ be quasiconcave and satisfy $F\in C^\infty$. Let $u\in C^{2,1}(Q_1)$ be a solution of $F(D^2u)-\partial_t u=0$ in $Q_1$. Then $u\in C^\infty(Q_1)$ and
\[
\|u\|_{C^{2+\alpha,1+\frac{\alpha}{2}}(Q_\frac12)}\leq C(n,\lambda,\Lambda,\|u\|_{C^{2,1}(Q_1)},|F(0)|).
\]
\end{cor}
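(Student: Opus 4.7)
The plan is to combine the pieces already assembled in this section. First, I would invoke Theorem \ref{EKpar} to obtain the quantitative a priori estimate
\[
\|u\|_{C^{2+\alpha,1+\frac{\alpha}{2}}(\overline{Q}_\frac12)}\leq C(n,\lambda,\Lambda,\|u\|_{C^{2,1}(\overline{Q}_1)}),
\]
noting that the hypothesis $F(0)=0$ can be assumed without loss of generality after subtracting the constant $F(0)$, which only contributes $|F(0)|$ to the right-hand side (exactly as in Remark following Theorem \ref{mainell}). The only subtlety is that Theorem \ref{EKpar} is stated for $C^{4,2}$ solutions, while here we only assume $u\in C^{2,1}$. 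For this, I would invoke the remark following Theorem \ref{EKpar}: the argument carries over by replacing genuine derivatives with incremental quotients in both space and time (in the spirit of the tools in \cite{FRRO} or Lemma 14.6 of \cite{Lieberman}), since Lemma \ref{Lue} and Corollary \ref{quot} are robust under this replacement.

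Once the $C^{2+\alpha,1+\alpha/2}$ estimate is established, I would bootstrap to $C^\infty$ using Proposition \ref{C2Cinfty}. Concretely, since $F\in C^\infty$ and $u\in C^{2+\alpha,1+\alpha/2}(Q_{1/2})$, the coefficient matrix $F_{ij}(D^2u)$ obtained by differentiating the equation in a spatial direction $e$ is $C^{\alpha,\alpha/2}$, and the linear parabolic equation satisfied by $u_e$ falls under classical Schauder theory, yielding $u_e\in C^{2+\alpha,1+\alpha/2}$. Iterating this process (alternating spatial differentiation and the Schauder estimate, and using the equation itself to trade one time derivative for two space derivatives) gives $u\in C^\infty(Q_1)$.

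The putative obstacle is not a deep one: it is the passage from $C^{2,1}$ to enough regularity to run the $\partial_t$-differentiated linearized equation rigorously in Theorem \ref{EKpar}. I would handle this by the incremental-quotient formulation: for small $h>0$ and any unit direction $e\in\R^n$, the spatial second difference quotient $\Delta^2_{he}u$ belongs to $\underline{\mathcal{S}}(\lambda/n,\Lambda,0)$ by Corollary \ref{quot}, and the time increment $\frac{u(x,t+h)-u(x,t)}{h}$ belongs to $\mathcal{S}(\lambda/n,\Lambda,0)$ by Theorem \ref{u-v} applied to the pair $u(\cdot,\cdot+h), u(\cdot,\cdot)$, both viewed as viscosity solutions of the same equation. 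The parabolic Krylov--Safonov estimate then yields uniform H\"older bounds on these quotients, so that the argument of Theorem \ref{EKpar} can be carried out without ever invoking $C^{4,2}$-regularity.

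Assembling these three inputs (the a priori estimate of Theorem \ref{EKpar}, the incremental-quotient reduction to the $C^{2,1}$ class, and the Schauder bootstrap of Proposition \ref{C2Cinfty}) gives both $u\in C^\infty(Q_1)$ and the stated quantitative bound, with $C$ depending only on $n,\lambda,\Lambda,\|u\|_{C^{2,1}(Q_1)}$ and $|F(0)|$.
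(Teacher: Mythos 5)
Your proposal is correct and uses exactly the two ingredients of the paper's proof (Theorem \ref{EKpar} and Proposition \ref{C2Cinfty}), only in the reverse order: the paper first applies Proposition \ref{C2Cinfty} to the $C^{2,1}$ solution to conclude $u\in C^\infty$ (this step needs no concavity, only $F\in C^\infty$ and uniform parabolicity), after which Theorem \ref{EKpar} applies verbatim to the now-smooth solution. Your order also works, but it forces you through the incremental-quotient reduction of the $C^{4,2}$ hypothesis, which the paper's ordering renders unnecessary.
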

\begin{proof}
This is a consequence of Proposition \ref{C2Cinfty} and the previous a priori Evans-Krylov estimates.
\end{proof}
\begin{rem}
The previous estimate can be reduced to
\[
\|u\|_{C^{2+\alpha,1+\frac{\alpha}{2}}(Q_\frac12)}\leq C(n,\lambda,\Lambda,|u|_{0;Q_1},|F(0)|)
\]
with the aid of the a priori $C^{2,1}$ estimates via the Bernstein method, see Corollary \ref{smoothconcave}.
\end{rem}
\section{Regularity estimates for fully nonlinear parabolic equations without concavity assumptions}\label{sec;2dpar}
\subsection{Equations with two space variables}
The same proof of Theorem \ref{EKpar} leads to the following regularity estimate for fully nonlinear parabolic equations in dimension 2+1 without any assumption on the operator other than the uniform parabolicity, see Remark \ref{2d} for more details.
\begin{thm}\label{2dpar}
Let $u:B_1\times(-1,0]\to\R$, $B_1\subset\R^2$, and $u$ be a viscosity solution to
\[
F(D^2u)-\partial_t u=0\text{ in }Q_1.
\] 
Assume that $F$ is uniformly parabolic (no other assumptions are required). Then for some $\alpha\in(0,1)$ we have the regularity estimate
\[
\|\partial_t u\|_{C^{\alpha,\alpha/2}(Q_\frac12)}+\|D^2u\|_{C^{\alpha,\alpha/2}(Q_\frac12)}\leq C|u|_{0;Q_1},
\]
where $C$ depends on $\lambda,\Lambda$.
\end{thm}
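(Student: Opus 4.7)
The plan is to mimic the proof of Theorem \ref{EKpar}, replacing the elliptic $C^{2,\alpha}$ input (Theorem \ref{mainell}) with the two-dimensional $C^{2,\alpha}$ estimate for fully nonlinear uniformly elliptic equations, valid for any uniformly elliptic $F$ without concavity assumptions when $n=2$ (cf. \cite{Nirenberg,CaffarelliYuan,FRRO}). All the intermediate parabolic steps are soft arguments based on Krylov--Safonov-type Hölder regularity and incremental quotients; they do not see the structure of $F$ beyond uniform parabolicity, so they carry over unchanged from the proof of Theorem \ref{EKpar}.

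The first step is to show that $\partial_t u$ is Hölder continuous in $Q_{3/4}$. Since the equation is translation-invariant in time, $u(x,t+h)$ is also a viscosity solution on a smaller cylinder, and by the parabolic analogue of Theorem \ref{u-v} the difference $u(x,t+h)-u(x,t)$ lies in the parabolic Pucci class $\mathcal{S}(\lambda/n,\Lambda)$. The parabolic Krylov--Safonov theorem then gives uniform-in-$h$ Hölder bounds for the time incremental quotients of $u$, and passing to the limit (or working with symmetric $t$-increments) yields
\[
[\partial_t u]_{C^{\gamma,\gamma/2}(Q_{3/4})}\leq C\,|u|_{0;Q_1}
\]
for some universal $\gamma\in(0,1)$.

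Next, freeze $t$ and regard $v(x):=u(x,t)$ as a viscosity solution of the elliptic equation $F(D^2v)=\phi(x)$ with $\phi(x):=\partial_t u(x,t)$. Since $\phi$ is Hölder continuous in $x$ with bound independent of $t$ and $n=2$, the two-dimensional $C^{2,\alpha}$ estimate for fully nonlinear uniformly elliptic equations with Hölder right-hand side yields a uniform bound $\|u(\cdot,t)\|_{C^{2,\iota}(B_{1/2})}\leq C(|u|_{0;Q_1})$ for some $\iota\in(0,1)$. To recover Hölder regularity of $D^2 u$ in time, I would apply Krylov--Safonov to spatial incremental quotients of $u$ as in the proof of Theorem \ref{EKpar}, obtaining $[Du]_{C^{\sigma,\sigma/2}(Q_{1/2})}\leq C$ for some $\sigma\in(0,1)$, and in particular $|Du(x,t_1)-Du(x,t_2)|\leq C|t_1-t_2|^{\sigma/2}$. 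Combined with the spatial Hölder continuity of $D^2 u$ just obtained, the interpolation inequality of Lemma~3.1, p.~78 in \cite{LSU} applied to $Du$ gives
\[
|D^2 u(x,t_1)-D^2 u(x,t_2)|\leq C\,|t_1-t_2|^{\sigma\iota/(2(1+\iota))},
\]
and the equivalence between the parabolic Hölder seminorm and its separated-variable variant (p.~120 in \cite{KrylovBookHolder}) closes the estimate.

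The main obstacle is the first step: producing Hölder continuity of $\partial_t u$ directly from $|u|_{0;Q_1}$ in the viscosity setting, since we cannot differentiate the equation classically. The correct object to work with is not $\partial_t u$ itself but the first-order difference quotient in $t$, viewed as a parabolic Pucci-class solution uniformly in the parameter $h$; once this is achieved the remaining steps are standard and rely solely on uniform parabolicity plus the exceptional two-dimensional elliptic $C^{2,\alpha}$ regularity result, so no quasiconcavity or other structural assumption on $F$ is needed anywhere.
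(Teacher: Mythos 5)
Your proposal is correct and follows essentially the same route as the paper: Hölder continuity of $\partial_t u$ (and of $Du$) from time/space incremental quotients in the parabolic Pucci class via Krylov--Safonov (the paper cites Wang's Theorems 4.8--4.9 for exactly this), then freezing time and invoking the two-dimensional elliptic $C^{2,\alpha}$ estimate with Hölder right-hand side, and finally the LSU interpolation lemma applied to $Du$ to recover time-Hölder continuity of $D^2u$. The only point you take for granted that the paper treats explicitly is the viscosity-solution version of Nirenberg's two-dimensional estimate with a $C^\alpha$ source, which the paper proves in its appendix (Theorem \ref{ell2D}) via the a priori estimate, the method of continuity, uniqueness, and Caffarelli's perturbation argument.
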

\begin{proof}
This follows combining the elliptic result by Nirenberg \cite{Nirenberg} (stated for viscosity solutions) with a perturbation argument of L. Caffarelli, cf. Section 8.1 in \cite{CC}, to show the Schauder estimate for the equation
\[
F(D^2u)=f(x)\text{ in }B_1.
\]
with $f\in C^\alpha$. See Theorem \ref{ell2D} for some details of the proof. This can be done once we know that $\partial_t u\in C^{\alpha,\alpha/2}$ for some $\alpha>0$. If we now freeze the time variable, viewing $\partial_t u$ as the right-hand side of the above fully nonlinear elliptic equation, and using that $f(x)=\partial_t u(\cdot,t)\in C^{\alpha}(B_\frac34)$ by \cite{Wang2}, one has that $D^2u(\cdot,t)$ exists in classical sense and is H\"older continuous in $B_\frac12$.
Therefore, in view of the interpolation argument in Theorem \ref{EKpar} and the space-time H\"older continuity of $Du$ obtained in \cite{Wang2} for viscosity solutions, one has the time-H\"older continuity of $D^2u$, which concludes the proof.
\end{proof}
\begin{rem}\label{2d}
The a priori estimate for the fully nonlinear elliptic equation $F(D^2u)=0$ in $B_1\subset\R^2$ without neither concavity nor convexity hypotheses is due to L. Nirenberg \cite{Nirenberg}, see also Theorems 4.9 and 4.26 in \cite{FRRO}. This argument consists in writing the equation satisfied by $w=u_{ee}$ as an equation in divergence form with measurable coefficients of the form $\mathrm{div}(A(x)Dw)=0$ in $B_1$. Then by the De Giorgi-Nash result $w\in C^\alpha(B_\frac12)$. This is stated for viscosity solutions in \cite{SilvestreAnnali}: since we were not able to find a proof in the literature, we decided to provide it in the appendix in Theorem \ref{ell2D}.\\
Another proof was obtained in Remark 2 of \cite{CaffarelliYuan} using a different argument and assuming $F\in C^2$: it relies on observing that $w$ is a subsolution, for a suitable $K>0$, to the equation
\[
F_{ij}\partial_{ij}(e^{Kw})=0.
\]
This idea in \cite{CaffarelliYuan} is the counterpart, in the concave case, of the fact that $u_{ee}$ is a subsolution to a certain nondivergence equation, but it has the advantage to remove the concavity hypothesis.
 In the parabolic setting it was first studied in \cite{Andrews} and recently stated for viscosity solutions in Proposition 1.5 of \cite{SilvestreAnnali}. The above proof slightly shorten \cite{Andrews}, though our estimate is stated for simpler equations independent of $x,t,Du$.
\end{rem}
\begin{rem}
It is worth remarking that fully nonlinear parabolic equations in one-space variables were treated in \cite{Kruzkov}, see also Section XIV.7 of \cite{Lieberman}, without concavity assumptions in the second derivatives.
\end{rem}
\subsection{Equations satisfying Cordes-type conditions}\label{sec;Cordes}

We conclude with a result that does not impose any concavity condition on the operator, but rather a restriction on the ratio among the ellipticity constants, meaning that the fully nonlinear equation is close to be a linear PDE in nondivergence form. Notably, no dimensional restrictions are required as in the previous section. Such a result appeared in \cite{Niu} in the form of a pointwise estimate for stationary equations and in \cite{BW} under more restrictive regularity hypotheses on $F$, while the earlier paper \cite{HuangAIHP} proved an interior estimate that we use here to derive its parabolic analogue. We refer also to the statement of Theorem 6.5 in \cite{SilvestreNotes}: the next theorem extends it to time-dependent equations.
\begin{thm}\label{cordes}
Let $F$ be continuous and uniformly parabolic, and suppose that
\[
\frac{\Lambda}{\lambda}<1+\frac{1}{80n}2^{-\frac72(n+2)^2}.
\]
 Let $u\in C(Q_1)$ be a viscosity solution to $F(D^2u)-\partial_t u=0$. Then $u\in C^{2+\alpha,1+\frac{\alpha}{2}}(\overline{Q}_\frac12)$ for some $\alpha>0$ and
\[
\|u\|_{C^{2+\alpha,1+\frac{\alpha}{2}}(\overline{Q}_\frac12)}\leq C,
\]
where $\alpha\in(0,1)$ and $C$ are universal constants, with $C$ depending also on $|u|_{0;Q_1}$ and $|F(0)|$.
\end{thm}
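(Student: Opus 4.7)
The plan is to mirror the strategy used in the proof of Theorem \ref{2dpar}: establish spatial $C^{2,\alpha}$ regularity by freezing time and viewing $\partial_t u$ as a Hölder right-hand side of a Cordes-type fully nonlinear elliptic equation, then recover time regularity of $D^2u$ via an interpolation inequality. The only ingredient that changes compared with Theorem \ref{2dpar} is the elliptic input: instead of Nirenberg's two-dimensional theorem, one uses the Cordes-type $C^{2,\alpha}$ estimate of \cite{HuangAIHP} (see also Theorem 6.5 in \cite{SilvestreNotes}), which requires no concavity and no restriction on $n$ as long as $\Lambda/\lambda$ is close enough to $1$. The precise smallness constant $1 + \frac{1}{80n}2^{-7(n+2)^2/2}$ is exactly the one appearing in the elliptic estimate, and reducing to $F(0)=0$ as in the remark following Theorem \ref{mainell} is harmless.

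First I would exploit the uniformly parabolic structure to get space-time Hölder regularity of $\partial_t u$. By Wang's parabolic Krylov--Safonov theory \cite{Wang1,Wang2} (applied either to classical approximations or to the incremental quotient $\frac{u(x,t+h)-u(x,t)}{h}$, which lies in a parabolic Pucci class thanks to uniform parabolicity), one obtains
\[
[\partial_t u]_{C^{\gamma,\gamma/2}(\overline{Q}_{3/4})}\leq C\,|u|_{0;Q_1}
\]
for some universal $\gamma\in(0,1)$. In particular, for each fixed $t$, $\partial_t u(\cdot,t)$ lies in $C^{\gamma}(B_{3/4})$ with a bound independent of $t$.

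Next, I freeze the time variable and regard $v(x):=u(x,t)$ as a viscosity solution of the stationary equation $F(D^2 v)=f(x)$, with $f(x):=\partial_t u(x,t)\in C^{\gamma}$ uniformly in $t$. Under the Cordes hypothesis $\Lambda/\lambda<1+\frac{1}{80n}2^{-7(n+2)^2/2}$, the elliptic $C^{2,\alpha}$ estimate in \cite{HuangAIHP} holds for the homogeneous equation, and Caffarelli's perturbation argument (Section 8 in \cite{CC}, applied exactly as in Theorem \ref{2dpar}) upgrades it to a pointwise $C^{2,\alpha}$ estimate for a Hölder right-hand side. This yields, for some universal $\iota\in(0,1)$ possibly smaller than $\gamma$,
\[
\|D^2 u(\cdot,t)\|_{C^{\iota}(\overline{B}_{1/2})}\leq C\bigl(|u|_{0;Q_1}+|F(0)|\bigr),\qquad t\in(-\tfrac{1}{2},0],
\]
so that $D^2u$ is uniformly Hölder in $x$ on each time slice.

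Finally, I recover time regularity of $D^2 u$ by interpolation, exactly as in Theorem \ref{EKpar}. Since uniform parabolicity applied to any directional difference quotient of $u$ gives $Du\in C^{\sigma,\sigma/2}(\overline{Q}_{1/2})$ via Wang's regularity, in particular $|Du(x,t_1)-Du(x,t_2)|\leq C|t_1-t_2|^{\sigma/2}$, while the spatial estimate above yields $|D^2u(x_1,t)-D^2u(x_2,t)|\leq C|x_1-x_2|^{\iota}$. Applying Lemma 3.1 of \cite{LSU} to $Du$ (as in the interpolation step of the proof of Theorem \ref{EKpar}) provides
\[
|D^2u(x,t_1)-D^2u(x,t_2)|\leq C\,|t_1-t_2|^{\frac{\sigma\iota}{2(1+\iota)}},
\]
and combining this with the spatial Hölder bound and the equivalence of parabolic Hölder seminorms on p.~120 of \cite{KrylovBookHolder} produces the claimed $C^{2+\alpha,1+\alpha/2}$ estimate, with $\alpha:=\min\{\iota,\sigma\iota/(1+\iota),\gamma\}$. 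The main technical difficulty is checking that the elliptic estimate of \cite{HuangAIHP} together with the Caffarelli perturbation argument applies verbatim to the frozen-time problem with merely continuous $F$; once this is secured, the rest of the argument is a direct transcription of Theorem \ref{2dpar} and Theorem \ref{EKpar}.
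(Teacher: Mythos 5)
Your proposal is correct and follows essentially the same route as the paper: the paper's proof of Theorem \ref{cordes} simply invokes the argument of Theorem \ref{EKpar} (freeze time, use the space-time H\"older continuity of $\partial_t u$, apply the elliptic Cordes-type estimate from Proposition 6.1 and Example 6.2 of \cite{HuangAIHP} on each time slice, then interpolate to recover time regularity of $D^2u$). Your write-up just spells out the steps the paper leaves implicit.
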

\begin{proof}
The proof is the same as that in Theorem \ref{EKpar} and consists in regarding the parabolic equation as an elliptic one, exploiting again that $\partial_t u\in C^{\alpha,\frac{\alpha}{2}}$ along with the elliptic result in Proposition 6.1 and Example 6.2 of \cite{HuangAIHP} valid under the Cordes-type condition $\frac{\Lambda}{\lambda}<1+\frac{1}{80n}2^{-\frac72(n+2)^2}$.
\end{proof}
\begin{rem}
As noted at p.17 of \cite{SilvestreNotes}, the counterexamples in \cite{NTV} (see in particular \cite{NTVadv}) hold when $\Lambda/\lambda$ is large, and thus Theorem \ref{cordes} does not contradict the results in \cite{NTVadv,NTV,NV}.
\end{rem}

\section{$C^{2+\alpha,1+\alpha/2}$ regularity estimates for parabolic quasiconcave equations}\label{sec;regpar}
We prove the following parabolic version of Theorem \ref{mainell} for the equation
\begin{equation}\label{parmodel}
F(D^2u)-\partial_t u=0\text{ in }Q_1=B_1\times (-1,0].
\end{equation}
\begin{thm}\label{mainpar}
Let $F$ be continuous, quasiconcave and uniformly parabolic. Let also $u\in C(Q_1)$ be a viscosity solution to \eqref{parmodel}. Then $u\in C^{2+\alpha,1+\frac{\alpha}{2}}(\overline{Q}_\frac12)$ for some $\alpha>0$ and
\[
\|u\|_{C^{2+\alpha,1+\frac{\alpha}{2}}(\overline{Q}_\frac12)}\leq C,
\]
where $\alpha\in(0,1)$ and $C$ are universal constants, with $C$ depending also on $|u|_{0;Q_1}$ and $|F(0)|$.
\end{thm}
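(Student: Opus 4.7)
The plan is to mirror the a priori argument of Theorem \ref{EKpar} but working throughout in the viscosity framework, replacing differentiation by incremental quotients and invoking the elliptic viscosity result Theorem \ref{mainell} on each time slice in place of the classical Evans--Krylov estimate. We proceed in four steps.

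First, I would establish that $\partial_t u$ (more precisely, the first time incremental quotient $\tau_h u(x,t):=h^{-1}(u(x,t+h)-u(x,t))$) is H\"older continuous in the parabolic sense. Since $F$ is autonomous, $v(x,t):=u(x,t+h)$ is again a viscosity solution of \eqref{parmodel}; by the parabolic analogue of Theorem \ref{u-v} (which uses only uniform parabolicity and follows from the doubling variables argument), $\tau_hu\in\mathcal{S}(\lambda/n,\Lambda)$ in the viscosity sense on a slightly smaller cylinder. The parabolic Krylov--Safonov H\"older estimate then yields $[\tau_hu]_{C^{\gamma,\gamma/2}(Q_{3/4})}\leq C|\tau_h u|_{0;Q_1}$ uniformly in $h$, and letting $h\to0$ gives $\partial_tu\in C^{\gamma,\gamma/2}(Q_{3/4})$ with a bound depending on $|u|_{0;Q_1}$ and $|F(0)|$ through the a priori $C^{1,1}$ bound obtained from Theorem \ref{C11visc} applied slice by slice to the equation $F(D^2u(\cdot,t))=\partial_tu(\cdot,t)$.

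Second, for each fixed $t\in(-\tfrac{9}{16},0]$ I would freeze time and regard $w_t(x):=u(x,t)$ as a viscosity solution of the \emph{elliptic} quasiconcave equation $F(D^2 w_t)=f_t(x)$ with $f_t(x):=\partial_t u(x,t)\in C^\gamma$. By the perturbation-type $C^{2,\alpha}$ result of Section \ref{sec;pert} (which rests on Theorem \ref{mainell}), we obtain $D^2u(\cdot,t)\in C^{\iota}(B_{1/2})$ for some $\iota\in(0,\gamma)$, with a bound uniform in $t$. This gives the full spatial $C^{2,\iota}$ regularity on every time slice.

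Third, to promote this to joint space-time H\"older regularity of $D^2u$, I would use the spatial first incremental quotient $\delta_{he}u(x,t)=h^{-1}(u(x+he,t)-u(x,t))$. Exactly as in the first step, $\delta_{he}u$ lies in the parabolic Pucci class $\mathcal{S}(\lambda/n,\Lambda)$, so parabolic Krylov--Safonov yields $[\delta_{he}u]_{C^{\sigma,\sigma/2}(Q_{1/2})}\leq C|Du|_{0;Q_{3/4}}$, and after sending $h\to 0$ we obtain $Du\in C^{\sigma,\sigma/2}(Q_{1/2})$. In particular $|Du(x,t_1)-Du(x,t_2)|\leq C|t_1-t_2|^{\sigma/2}$. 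Combining this with the spatial estimate $|D^2u(x_1,t)-D^2u(x_2,t)|\leq C|x_1-x_2|^{\iota}$ from the second step and applying the interpolation inequality of Lemma 3.1, p.\ 78 in \cite{LSU} to the components of $Du$, we obtain
\[
|D^2u(x,t_1)-D^2u(x,t_2)|\leq C|t_1-t_2|^{\frac{\sigma\iota}{2(1+\iota)}}.
\]
By the equivalence of the two natural parabolic H\"older seminorms recalled at the end of the proof of Theorem \ref{EKpar}, this gives $D^2u\in C^{\alpha,\alpha/2}(\overline{Q}_{1/2})$ for $\alpha:=\min\{\iota,\sigma\iota/(1+\iota)\}$. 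Finally, the equation $\partial_tu=F(D^2u)$ transfers the same regularity to $\partial_tu$, yielding the claimed $C^{2+\alpha,1+\alpha/2}$ bound.

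I expect the main obstacle to be the rigorous handling of incremental quotients in Steps 1 and 3: one must verify that they really are viscosity sub/supersolutions of parabolic Pucci equations (without assuming any differentiability of $u$) and that the resulting constants depend only on $|u|_{0;Q_1}$ and $|F(0)|$, not on higher norms. This is essentially the parabolic counterpart of Corollary \ref{quot} plus Theorem \ref{u-v}, and it is the place where the quasiconcavity of $F$ (via Theorem \ref{muuv}) enters the viscosity argument to justify that convex combinations and time translates remain subsolutions.
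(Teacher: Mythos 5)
Your proposal follows essentially the same route as the paper: Wang's parabolic $C^{1+\alpha,(1+\alpha)/2}$ theory (difference quotients in the parabolic Pucci class plus Krylov--Safonov) to get $\partial_t u, Du\in C^{\alpha,\alpha/2}$, then freezing time and applying the elliptic quasiconcave result with H\"older right-hand side on each slice, and finally the LSU interpolation lemma to recover time-H\"older continuity of $D^2u$ --- the paper's proof explicitly offers this interpolation argument as one of its two options for the last step. The only weak point is your justification in Step 1 of the uniform-in-$h$ bound on $|\tau_h u|_{0}$: invoking the slice-wise elliptic $C^{1,1}$ estimate for $F(D^2u(\cdot,t))=\partial_t u(\cdot,t)$ presupposes that $\partial_t u$ already exists and is bounded, which is what you are trying to establish; the non-circular route is the iteration/scaling scheme of Wang's Theorems 4.8--4.9 in \cite{Wang2}, which is exactly what the paper cites, so the gap is one of attribution rather than substance.
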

The proof makes use of the same idea of the a priori estimate proved in Theorem \ref{EKpar}, namely, being $\partial_t u\in C^{\alpha,\alpha/2}$ for some small $\alpha$ for any uniformly parabolic equation (without any concavity assumption), we regard the equation as an elliptic equation with a H\"older continuous source. In particular, we use the following
\begin{thm}
Let $u\in C(Q_1)$ be a viscosity solution to \eqref{parmodel}  with $F$ uniformly parabolic. Then, for some $\alpha>0$ small depending on $n,\lambda,\Lambda$ we have $u\in C^{1+\alpha,\frac{1+\alpha}{2}}(Q_\frac12)$ and it holds the regularity estimate
\[
\|\partial_t u\|_{C^{\alpha,\frac{\alpha}{2}}(Q_\frac12)}+\|Du\|_{C^{\alpha,\frac{\alpha}{2}}(Q_\frac12)}\leq C(|u|_{0;Q_1}+|F(0)|),
\]
where the constant $C$ is universal.
\end{thm}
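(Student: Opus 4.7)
The plan is to combine two pieces of interior regularity for viscosity solutions of fully nonlinear uniformly parabolic equations, both valid with no concavity or convexity assumption on $F$: the interior parabolic $C^{1,\alpha}$ estimate of L.~Wang \cite{Wang2}, and the parabolic Krylov--Safonov H\"older estimate for the Pucci classes \cite{Wang1}.

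For the spatial part, Wang's interior $C^{1+\alpha,(1+\alpha)/2}$ theorem, applied directly to \eqref{parmodel}, yields $u\in C^{1+\alpha,(1+\alpha)/2}(Q_{1/2})$ together with
\[
\|Du\|_{C^{\alpha,\alpha/2}(Q_{1/2})}\leq C(|u|_{0;Q_1}+|F(0)|).
\]
For the time-derivative estimate I would pass to first-order time increments: fix $h>0$ small and set $w_h(x,t):=h^{-1}(u(x,t+h)-u(x,t))$. Translation invariance in $t$ and the uniform parabolicity of $F$ imply that $w_h$ is simultaneously a viscosity subsolution of $\partial_tw-\mathcal{M}^+_{\lambda,\Lambda}(D^2w)=0$ and a viscosity supersolution of $\partial_tw-\mathcal{M}^-_{\lambda,\Lambda}(D^2w)=0$ on $B_1\times(-1,-h]$; this is the parabolic counterpart of Theorem \ref{u-v}. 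The parabolic Krylov--Safonov H\"older estimate from \cite{Wang1} then gives, for a universal $\gamma\in(0,1)$,
\[
[w_h]_{C^{\gamma,\gamma/2}(Q_{3/4})}\leq C\,\|w_h\|_{L^\infty(Q_1)},
\]
with $C$ independent of $h$. Letting $h\to 0^+$ along a subsequence and invoking Arzel\`a--Ascoli produces a H\"older-continuous limit which must coincide with $\partial_tu$ and inherits the stated estimate on $Q_{1/2}$.

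The main obstacle is the uniform-in-$h$ bound $\|w_h\|_{L^\infty(Q_1)}\leq C(|u|_{0;Q_1}+|F(0)|)$, equivalently local Lipschitz continuity of $u$ in time. This does \emph{not} come for free from Wang's $(1+\alpha)/2$-H\"older control in $t$, since $(1+\alpha)/2<1$. I would obtain it by a barrier--comparison argument relying only on the uniform parabolicity: for each $(x_0,t_0)\in Q_{3/4}$ one builds explicit super- and subsolutions of the extremal Pucci parabolic equations, of paraboloid-plus-linear-in-$t$ form, that touch $u$ at $(x_0,t_0)$ and control $u(\cdot,\cdot+h)$ from above and below by $u(\cdot,\cdot)\pm Ch$ on a slightly smaller cylinder, with $C$ depending only on $|u|_{0;Q_1}$ and $|F(0)|$. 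The parabolic comparison principle together with the ABP estimate then delivers the pointwise bound $|u(x_0,t_0+h)-u(x_0,t_0)|\leq Ch$, and combining this with the H\"older estimate on $w_h$ and with Wang's spatial $C^{1,\alpha}$ estimate yields both pieces of the claimed regularity.
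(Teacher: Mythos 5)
Your overall architecture coincides with the paper's: the spatial estimate is taken from Wang's interior $C^{1+\alpha,(1+\alpha)/2}$ theorem, and for $\partial_t u$ one observes that the time increments $w_h$ belong to the parabolic Pucci class $\mathcal{S}(\lambda/n,\Lambda,0)$, so that the parabolic Krylov--Safonov estimate yields $[w_h]_{C^{\gamma,\gamma/2}}\leq C\|w_h\|_{L^\infty}$ (the paper simply cites Theorems 4.8--4.9 of \cite{Wang2} together with Theorems 4.15--4.16 of \cite{Wang1}). You also correctly isolate the crux, namely the uniform-in-$h$ bound $\|w_h\|_{L^\infty}\leq C(|u|_{0;Q_1}+|F(0)|)$, i.e.\ interior Lipschitz continuity of $u$ in time, which indeed does not follow from the $(1+\alpha)/2$-H\"older control in $t$.

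The mechanism you propose for that crux, however, does not work. A barrier of paraboloid-plus-linear-in-$t$ form that touches $u$ at an \emph{arbitrary} interior point $(x_0,t_0)$ and dominates $u$ on the parabolic boundary of a small forward cylinder requires, on the bottom slice $\{t=t_0\}$, that $u(\cdot,t_0)$ be touched from above (resp.\ below) at $x_0$ by a paraboloid of universally controlled opening. That is a pointwise one-sided $C^{1,1}$ estimate, which is exactly what fails for merely uniformly parabolic $F$ without concavity; had it held at every point, the equation $\partial_t u=F(D^2u)$ would bound $\partial_t u$ immediately, and the whole difficulty would evaporate. The ABP/$W^{2,\eps}$ machinery only furnishes such touching points on a set of positive measure, not at the prescribed point $(x_0,t_0)$, so the comparison cannot be closed pointwise. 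The known proof of the time-Lipschitz bound is genuinely different: it exploits time-translation invariance and bootstraps the H\"older-in-time exponent --- the Krylov--Safonov estimate applied to $w_h$ at two times differing by $h$ controls the second time difference $u(x,t+h)-2u(x,t)+u(x,t-h)$ by $Ch^{\gamma/2}\|w_h\|_{L^\infty}\leq Ch^{\beta+\gamma/2}$ whenever $|u(x,t+h)-u(x,t)|\leq Ch^{\beta}$, and a dyadic telescoping argument then upgrades $\beta$ step by step to $1$. This is the content of Theorem 4.8 of \cite{Wang2}, which the paper invokes directly. With that step repaired, the remainder of your argument (Arzel\`a--Ascoli on $w_h$ and identification of the limit with $\partial_t u$) is fine.
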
  
\begin{proof}
The proof can be found in Theorems 4.8 and 4.9 of \cite{Wang2}, and it is based on the fact that the first-order difference quotients (in space to estimate $Du$, and in time to estimate $\partial_t u$) belong to the class $\mathcal{S}(\lambda,\Lambda,0)=\underline{\mathcal{S}}(\lambda,\Lambda,0)\cap \overline{\mathcal{S}}(\lambda,\Lambda,0)$. Thus one can apply Theorems 4.15 and 4.16 of \cite{Wang1} to deduce the desired regularity.
\end{proof}
Once the elliptic result and $\partial_t u\in C^{\alpha,\frac{\alpha}{2}}$  are known, we conclude the following
\begin{prop}
Let $u\in C(Q_1)$ be a viscosity solution to \eqref{parmodel}  with $F$ uniformly parabolic and quasiconcave. Then $D^2u(\cdot,t)$ exists and is H\"older continuous for any fixed value of $t$. Moreover, we have
\[
\|D^2u(\cdot,t)\|_{C^{\alpha}(B_\frac34)}\leq C(n,\lambda,\Lambda,|u|_{0;Q_1},|F(0)|).
\]
\end{prop}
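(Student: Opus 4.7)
The plan is to freeze the time variable and reduce the statement to an elliptic problem with a H\"older continuous right-hand side, for which the machinery developed earlier in the paper is directly applicable. Fix any $t_0 \in (-1/2,0]$ and set $v(x) := u(x,t_0)$ and $f(x) := \partial_t u(x,t_0)$. Then $v$ is a viscosity solution of
\[
F(D^2 v(x)) = f(x) \quad \text{in } B_{3/4}.
\]
By the immediately preceding theorem (the $C^{1+\alpha,(1+\alpha)/2}$ regularity result for viscosity solutions of \eqref{parmodel} that requires only uniform parabolicity), we know that
\[
\|\partial_t u\|_{C^{\alpha,\alpha/2}(Q_{3/4})} \leq C\bigl(|u|_{0;Q_1}+|F(0)|\bigr),
\]
so that $f \in C^{\alpha}(B_{3/4})$ with a bound independent of the chosen time slice $t_0$.

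The heart of the argument is then to apply the elliptic perturbation result of Section \ref{sec;pert} to $v$: since $F$ is quasiconcave and uniformly elliptic and we have the full $C^{2,\alpha}$ regularity for homogeneous quasiconcave equations at our disposal via Theorem \ref{mainell}, a standard Caffarelli-type perturbation argument based on freezing and comparing with solutions of $F(D^2 w) = \text{const}$ yields that $v \in C^{2,\alpha'}(B_{1/2})$, for some possibly smaller $\alpha' \in (0,\alpha)$ universal, together with the estimate
\[
\|v\|_{C^{2,\alpha'}(\overline{B}_{1/2})} \leq C\bigl(\|v\|_{L^\infty(B_{3/4})} + \|f\|_{C^{\alpha}(B_{3/4})} + |F(0)|\bigr).
\]
To pass from $B_{1/2}$ to $B_{3/4}$ as in the statement, we cover $\overline{B}_{3/4}$ by finitely many small balls $B_{r}(x_j)$ with $r$ universal and repeat the previous argument centered at each $x_j$, in the end rescaling; the resulting estimates depend on $n,\lambda,\Lambda$, on $|u|_{0;Q_1}$, and on $|F(0)|$, as claimed.

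Because all constants appearing in both the H\"older estimate for $\partial_t u$ and the elliptic $C^{2,\alpha'}$ perturbation estimate are universal and independent of the particular time slice $t_0$, the bound on $\|D^2 u(\cdot,t_0)\|_{C^{\alpha'}(B_{3/4})}$ is uniform in $t_0$, which gives the stated conclusion (relabeling $\alpha'$ as $\alpha$).

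The main potential obstacle is the use of the perturbation step: one has to make sure that the elliptic $C^{2,\alpha}$ theory of Theorem \ref{mainell} combines cleanly with a H\"older right-hand side in the quasiconcave setting. Since quasiconcavity is stable under adding a constant on the right (the level sets of $M \mapsto F(M)-c$ are the same as the level sets of $F$, only relabeled), the homogeneous problem $F(D^2 w) = c$ falls within the scope of Theorem \ref{mainell}, and the standard freezing/approximation scheme of Caffarelli in \cite{CC} transfers the $C^{2,\alpha}$ estimates from the constant right-hand side case to the $C^\alpha$ right-hand side case without additional assumptions on $F$. Aside from this step, everything else is a direct application of previously established elliptic and parabolic results.
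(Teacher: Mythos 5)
Your proposal is correct and follows essentially the same route as the paper: freeze the time variable, use the preceding theorem to see that $f(x)=\partial_t u(x,t)$ is H\"older continuous uniformly in $t$, and then invoke the elliptic $C^{2,\alpha}$ theory for quasiconcave equations with $C^\alpha$ right-hand side via Caffarelli's perturbation argument. The paper's proof is just a two-sentence version of exactly this argument, and your additional remarks (uniformity of the constants in the time slice, stability of quasiconcavity under adding constants) correctly fill in the details it leaves implicit.
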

\begin{proof}
Since $\partial_t u$ exists in classical sense and belongs to $C^{\alpha,\frac{\alpha}{2}}$, one can freeze time and regard the equation as an elliptic equation with H\"older continuous right-hand side. In view of the elliptic result we have the existence and the interior $C^{\alpha}$ regularity of $D^2u(\cdot,t)$.
\end{proof}
Finally, we are only left to prove the time-H\"older continuity of $D^2u$. To this aim, we proceed as in \cite{Andrews} and \cite{SilvestreAnnali} to prove the main result
\begin{proof}[Proof of Theorem \ref{mainpar}]
The proof is the same of Proposition 1.5 in \cite{SilvestreAnnali}, see also Sections 3.3 and 3.4 in \cite{Andrews}. Alternatively, one can exploit the interpolation argument used to prove the parabolic a priori estimate.
\end{proof}
\begin{rem}
One can also state a parabolic version of the perturbation results in Section \ref{sec;pert} using those of \cite{Wang1,Wang2} as well as an application to fully nonlinear parabolic PDEs with lower order terms by means of the $C^{1+\alpha,\frac{1+\alpha}{2}}$ results in \cite{Wang2}.
\end{rem}

\section{Other estimates for fully nonlinear parabolic equations concave at infinity}
We prove here some $C^{2,1}$ estimates for fully nonlinear parabolic equations under some (different) concavity assumptions at infinity. These extend a result by L. Caffarelli and Y. Yuan \cite{CaffarelliYuan} to parabolic equations. 
\begin{thm}\label{C11infinity}
Let $F$ be concave (respectively convex) at infinity, or close to a linear function at infinity, i.e. it satisfies one of the following conditions
\begin{itemize}
\item[(i)] $F_{ij,kl}(M)N_{ij}N_{kl}\leq 0$ for $N\in\mathcal{S}_n$ (resp. $\geq0$) when $\|M\|\geq\nu>0$;
\item[(ii)]  $\|D^2F(M)\|\leq \frac{\delta}{\|M\|}$ when $\|M\|\geq \nu$, $\nu>0$ and a small $\delta>0$.
\end{itemize}
Let also $F$ be uniformly parabolic, $F\in C^2$. Let $u\in C^{4,2}(\overline{Q}_1)$ be such that $F(D^2u)-\partial_t u=0$ in $Q_1$. Then
\[
\|u\|_{C^{2,1}(Q_{1/2})}\leq C(\mathrm{osc}_{Q_1}u+|F(0)|+\nu)
\]
Here, $C$ is a universal constant, depending only on $n,\lambda,\Lambda$. 
\end{thm}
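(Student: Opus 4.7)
I would prove the bound by the parabolic Bernstein method, adapting the elliptic argument of Theorem \ref{C11} and following the scheme used by Caffarelli--Yuan \cite{CaffarelliYuan} and Landis \cite{Landis}. As in the stationary case, differentiating the equation once and twice gives, for $\mathcal{L}:=F_{ij}(D^2u)\partial_{ij}-\partial_t$,
\[
\mathcal{L} u_e=0,\qquad \mathcal{L} u_{ee}=-F_{ij,kl}(D^2u)\,u_{eij}\,u_{ekl}.
\]
Note also that the equation itself gives $|\partial_t u|\le C(|D^2u|+|F(0)|)$ by uniform parabolicity, so the estimate on $\partial_t u$ reduces to the estimate on $D^2u$; and upon replacing $u$ by $-u$ (which solves an equation of the same type with the roles of (i)-concave and (i)-convex interchanged), it suffices to bound $u_{ee}$ from above.

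\textbf{Gradient bound.} First I would choose a cut-off $\eta\in C_c^\infty(Q_1)$ with $\eta\equiv 1$ on $Q_{1/2}$ and consider the parabolic Bernstein test function
\[
z(x,t)=\delta(M-u(x,t))^2+\eta^2|Du(x,t)|^2,\qquad M=\sup_{Q_1}u.
\]
A direct computation using $\mathcal{L} u_e=0$ and $\mathcal{L}(u_e^2)=2F_{ij}u_{ei}u_{ej}\ge 2\lambda|Du_e|^2$ gives, after absorbing cut-off terms and choosing $\delta$ large (depending only on $n,\lambda,\Lambda$, $|F(0)|$ and $\mathrm{osc}_{Q_1}u$), that $\mathcal{L} z\ge 0$ in $Q_1$. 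The parabolic maximum principle applied to $z$ then yields $|Du|_{0;Q_{1/2}}\le C(\mathrm{osc}_{Q_1}u+|F(0)|)$.

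\textbf{Second derivative bound under (i).} Fix a unit direction $e$. Under hypothesis (i), the key observation is that on the set $\Omega_\nu:=\{\|D^2u\|\ge\nu\}$ one has $F_{ij,kl}u_{eij}u_{ekl}\le 0$, hence $\mathcal{L} u_{ee}\ge 0$ there. I would then consider
\[
h(x,t)=\mu|Du(x,t)|^2+\eta^2\bigl((u_{ee}(x,t)-\nu)_+\bigr)^2,
\]
and show, as in the elliptic case, that $\mathcal{L} h\ge 0$ on the set $\{u_{ee}>\nu\}$ for $\mu$ sufficiently large: the term $\mathcal{L}(|Du|^2)=2F_{ij}u_{ki}u_{kj}\ge 2\lambda\|D^2u\|^2$ is used to absorb the cross terms generated by the cut-off together with the terms $F_{ij}(\eta^2)_iu_{ee}u_{eej}$. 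Outside $\{u_{ee}>\nu\}$ the function $(u_{ee}-\nu)_+$ vanishes, so $h$ remains bounded there. Applying the parabolic maximum principle yields $u_{ee}^+\le C(\mathrm{osc}_{Q_1}u+|F(0)|+\nu)$ in $Q_{1/2}$. Combined with the analogous estimate for $-u$ (which handles the convex-at-infinity branch of (i)), this gives a two-sided bound on $u_{ee}$ for every unit direction $e$, and thus on $\|D^2u\|$.

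\textbf{Second derivative bound under (ii); the main obstacle.} The principal difficulty is hypothesis (ii), where $F_{ij,kl}u_{eij}u_{ekl}$ is no longer signed but only bounded: $|F_{ij,kl}u_{eij}u_{ekl}|\le n^2\|D^2F(D^2u)\|\,|Du_e|^2\le (n^2\delta/\|D^2u\|)|Du_e|^2$ on $\{\|D^2u\|\ge\nu\}$. Here I would replace the positive part $(u_{ee}-\nu)_+^2$ in the Bernstein function by an exponential of $u_{ee}$, following the idea used in Remark~2 of \cite{CaffarelliYuan} in the elliptic setting, and try the test function $h=\mu|Du|^2+\eta^2 e^{Ku_{ee}}$ (taken on the sublevel set $\{u_{ee}>\nu\}$ and cutoff to zero otherwise). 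The key point is that $\mathcal{L}(e^{Ku_{ee}})=e^{Ku_{ee}}(K\mathcal{L} u_{ee}+K^2 F_{ij}u_{eei}u_{eej})$, and the additional quadratic gradient term of order $K^2\lambda|Du_{ee}|^2$ can dominate the error $K e^{Ku_{ee}}\cdot n^2\delta|Du_e|^2/\|D^2u\|$ provided $\delta$ is small enough and $K$ is chosen appropriately. Choosing $\mu$ large as before to absorb the remaining cut-off terms via $\mathcal{L}(|Du|^2)\ge 2\lambda\|D^2u\|^2$, one obtains $\mathcal{L} h\ge 0$ on $\{u_{ee}>\nu\}$, and the parabolic maximum principle closes the estimate. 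This is the step where the smallness of $\delta$ is used in an essential way.
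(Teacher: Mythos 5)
Your overall scheme (parabolic Bernstein functions with a cut-off, a one-sided bound on $u_{ee}$ obtained by the maximum principle on the set $\{u_{ee}>\nu\}$, then recovery of the two-sided Hessian bound and of $\partial_t u$ from the equation and uniform parabolicity) is the paper's approach, and your treatment of case (i) is essentially the paper's: there one takes $h=\chi^2u_{ee}^2+\mu u_e^2$ and reads off the boundary value $\nu^2$ on $\partial\{u_{ee}>\nu\}$ instead of truncating with $(u_{ee}-\nu)_+$, which is an immaterial difference.

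Case (ii), however, contains a genuine gap. The error term is
\[
\bigl|F_{ij,kl}(D^2u)\,\partial_{ij}u_e\,\partial_{kl}u_e\bigr|\le C\,\|D^2F(D^2u)\|\,\|D^2u_e\|^2\le \frac{C\delta}{\|D^2u\|}\,\|D^2u_e\|^2,
\]
where $\|D^2u_e\|^2=\sum_{i,j}u_{eij}^2$ runs over \emph{all} third derivatives carrying one index $e$; you wrote $|Du_e|^2$, which is a second-order quantity, and this misprint hides the obstruction. After multiplication by $u_{ee}\le\|D^2u\|$ the error has size $\delta\|D^2u_e\|^2$. The favourable term produced by your exponential, $K^2F_{ij}u_{eei}u_{eej}\ge K^2\lambda|Du_{ee}|^2=K^2\lambda\sum_iu_{eei}^2$, contains only the third derivatives with \emph{two} indices equal to $e$. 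Components such as $u_{eij}$ with $i,j\ne e$ occur in the error but not in the good term, so for a single direction $e$ (and $n\ge2$) no choice of $K$ and no smallness of $\delta$ makes the good quadratic form dominate the bad one; the exponential device of Remark~2 in \cite{CaffarelliYuan} exploits the two-dimensional structure and does not transfer to general $n$. The paper's fix is to sum the Bernstein quantity over the finite set of directions
\[
\Gamma=\Bigl\{e_1,\dots,e_n,\tfrac{1}{\sqrt2}(e_i\pm e_j),\ 1\le i<j\le n\Bigr\},
\]
i.e.\ to take $h=\mu\sum_{e\in\Gamma}u_e^2+\chi\sum_{e\in\Gamma}(u_{ee}^+)^2$, and to invoke Lemma~17.13 of \cite{GT}: summing over $\Gamma$, the quantity $\sum_{e\in\Gamma}|Du_{ee}|^2$ controls $\sum_{e\in\Gamma}\|D^2u_e\|^2$ up to a dimensional constant, after which smallness of $\delta$ absorbs the error and no exponential is needed. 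To repair your argument, replace the single-direction test function by this sum over $\Gamma$ (a sum over the coordinate directions alone is still insufficient, since it misses $u_{ijk}$ with three distinct indices).
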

\begin{proof}
We follow \cite{Landis}, see also \cite{Kovats}. Take $\chi\in C^2(\overline{Q}_1)$ such that $0\leq\chi\leq 1$ in $\overline{Q}_\frac34$, $\chi=1$ in $Q_\frac12$ and $\chi\equiv0$ on $\partial_{\mathrm{par}}Q_\frac34$ satisfying
\[
0\leq\frac{|D\chi|^2}{\chi}+\|D^2\chi\|+|\partial_t\chi|\leq C
\]
where $C$ depends only on the dimension $n$.\\
 We first discuss the case (i), assuming without loss of generality that $F$ is concave when $\|D^2u\|$ is large. We consider in $A:=Q_\frac34\cap \{x\in Q_\frac34:u_{ee}>\nu\}$ the function
\[
h=\chi^2(u_{ee})^2+\mu u_e^2.
\]
We have
\begin{equation}\label{Pue}
F_{ij}(D^2u)\partial_{ij}u_e-\partial_t u_e=0
\end{equation}
and
\begin{equation}\label{Puee}
\mathcal{P}u_{ee}:=F_{ij}(D^2u)\partial_{ij}u_{ee}-\partial_t u_{ee}=-F_{ij,kl}(D^2u)\partial_{ij}u_e\partial_{kl}u_e.
\end{equation}
Since $F$ is concave when $\|D^2u\|$ is large, the right-hand side in \eqref{Puee} is nonnegative, so
\[
F_{ij}(D^2u)\partial_{ij}u_{ee}-\partial_t u_{ee}\geq0\text{ when }\|D^2u\|\geq\nu.
\]
We drop the dependence on $D^2u$ on $F$ and compute $\mathcal{P}h=F_{ij}\partial_{ij}h-\partial_t h$, using that
\[
0\leq\chi\leq1;
\]
\[
\mathcal{P}u_e=0,\ \mathcal{P}u_{ee}\geq 0\text{ when }\|D^2u\|\geq\nu;
\]
\[
u_{ee}=Du_e\cdot e\leq |Du_e|.
\]
We have
\begin{align*}
\mathcal{P}h&=2u_{ee}^2F_{ij}\partial_i\chi\partial_j\chi+2\chi u_{ee}^2\mathcal{P}\chi+8\chi u_{ee}F_{ij}\partial_i\chi\partial_j u_{ee}+2\chi^2F_{ij}\partial_j u_{ee}\partial_i u_{ee}\\
&+2\chi^2u_{ee}\mathcal{P}u_{ee}+2\mu\mathcal{P}u_e+2\mu F_{ij}\partial_i u_e\partial_j u_e\\
&\geq 2u_{ee}^2F_{ij}\partial_i\chi\partial_j\chi+2\chi u_{ee}^2\mathcal{P}\chi+8\chi u_{ee}F_{ij}\partial_i\chi\partial_j u_{ee}+2\chi^2F_{ij}\partial_j u_{ee}\partial_i u_{ee}+2\mu F_{ij}\partial_i u_e\partial_j u_e\\
&\geq 2\lambda u_{ee}^2|D\chi|^2+2\chi u_{ee}^2\mathcal{P}\chi-8\Lambda\chi u_{ee}|D\chi||Du_{ee}|+2\chi^2\lambda|Du_{ee}|^2+2\mu\lambda|Du_e|^2\\
&\geq 2\lambda u_{ee}^2|D\chi|^2+2\chi u_{ee}^2(\mathcal{P}\chi+\mu\lambda)-8\Lambda\chi u_{ee}|D\chi||Du_{ee}|+2\chi^2\lambda|Du_{ee}|^2.
\end{align*}
We now take $\mu$ large so that
\[
|\mathcal{P}\chi|\leq \frac{\mu\lambda}{2}.
\]
Indeed, this is possible since
\[
|\mathcal{P}\chi|\leq \|DF\|\|D^2\chi\|+|\partial_t\chi|\leq C\Lambda\sqrt{n}+C=C(\Lambda\sqrt{n}+1)
\]
by taking
\[
\mu\geq \frac{2C}{\lambda}(\Lambda\sqrt{n}+1).
\]
This implies, using also the weighted Young's inequality,
\begin{align*}
\mathcal{P}h&\geq 2\lambda u_{ee}^2|D\chi|^2+\chi u_{ee}^2\mu\lambda-8\Lambda\chi u_{ee}|D\chi||Du_{ee}|+2\chi^2\lambda|Du_{ee}|^2\\
&\geq 2\lambda u_{ee}^2|D\chi|^2+\chi u_{ee}^2\mu\lambda-8\Lambda\left(\eps u_{ee}^2|D\chi|^2+\frac{\chi^2|Du_{ee}|^2}{\epsilon}\right)+2\chi^2\lambda|Du_{ee}|^2\\
&\geq (2\lambda-8\Lambda\eps) u_{ee}^2|D\chi|^2+u_{ee}^2\mu\lambda\frac{|D\chi|^2}{C}+\chi^2|Du_{ee}|^2\left(2\lambda-\frac{8\Lambda}{\eps}\right)\\
&=u_{ee}^2|D\chi|^2\left(2\lambda-8\Lambda\eps+\frac{\mu\lambda}{C}\right)+\chi^2|Du_{ee}|^2\left(2\lambda-\frac{8\Lambda}{\eps}\right)\\
&\geq u_{ee}^2|D\chi|^2\left(\frac{\mu\lambda}{C}-8\Lambda\eps\right)+\chi^2|Du_{ee}|^2\left(2\lambda-\frac{8\Lambda}{\eps}\right).
\end{align*}
We first choose $\eps$ so that
\[
\frac{\mu\lambda}{C}-8\Lambda\eps=\frac{\mu\lambda}{2C}\implies \eps=\frac{\mu\lambda}{16\Lambda C}.
\]
Choosing $\mu$ large so that
\[
2\lambda-\frac{8\Lambda}{\eps}=2\lambda-\frac{128 \Lambda^2 C}{\mu\lambda}\geq0,
\]
and hence we can choose $\mu$ once for all such that
\[
\mu\geq\max\left\{\frac{2C}{\lambda}(\Lambda\sqrt{n}+1),\frac{64\Lambda^2 C}{\lambda^2}\right\},
\]
to have $\mathcal{P}h\geq0$ on the set where $u_{ee}>\nu$. 
Therefore, we conclude that on $A$ the auxiliary function $h$ is a solution of the following differential inequality in nondivergence form
\[
\mathcal{P}h\geq 0.
\]
We can thus apply the maximum principle on the set $A$, observing that
\[
A_\frac34=(B_\frac34\cap \{x\in B_\frac34:u_{ee}(t)>\nu\})\times\left(-\frac{9}{16},0\right):=W\times\left(-\frac{9}{16},0\right)
\]
and
\[
\partial W\subseteq \partial B_\frac34\cup \{x\in B_\frac34:u_{ee}(t)=\nu\}.
\]
Therefore
\begin{align*}
\partial_{\mathrm{par}} A_\frac34&\equiv\left[\partial W\times\left(-\frac{9}{16},0\right)\right]\cup \left[W\times\left\{t=-\frac{9}{16}\right\}\right]\\
&\subseteq \left[(\partial B_\frac34\cup \{x\in B_\frac34:u_{ee}(t)=\nu\})\times \left(-\frac{9}{16},0\right)\right]\cup \left[W\times\{t=-\frac{9}{16}\}\right]\\
&\equiv \left[\partial B_\frac34 \times \left(-\frac{9}{16},0\right)\right]\cup \{(x,t)\in Q_\frac34:u_{ee}(t)=\nu\})\cup \left(B_\frac34\times\left\{t=-\frac{9}{16}\right\}\right)\\
&\cup \left\{(x,t)\in B_\frac34\times\left\{t=-\frac{9}{16}\right\}:u_{ee}(t)>\nu\right\}
\end{align*}
and by the maximum principle (using that $\chi\equiv0$ on $\partial_{\mathrm{par}}Q_\frac34$)
\[
\sup_{A} h\leq \sup_{\partial_{\mathrm{par}} A}h\leq \mu\sup_{Q_\frac34}(u_e)^2+\nu^2.
\]
Moreover
\[
\sup_{A} h\geq \sup_{ Q_\frac12}(u_{ee}^+)^2.
\]
A similar calculation for the function $z=\delta(M-u)^2+\eta^2|Du|^2$ (or alternatively the estimate for viscosity solutions in \cite{Wang2}), gives the estimate
\[
\sup_{ Q_\frac12}(u_{ee}^+)\leq \tilde{C}\left(\mu\sup_{Q_1}|u|+\nu\right).
\]
The estimate on the full Hessian follows by the equation, using the uniform ellipticity, Lemma \ref{twoside} and the estimate on $|\partial_t u|_{0;Q_1}$ from \cite{Wang2}, see also \cite{GutierrezHuang}. Indeed, the same arguments of Lemma 6.4 in \cite{CC} show that when $F(D^2u)-\partial_t u=0$, then
\[
F(0)\leq F(D^2u)+\Lambda\|(D^2u)^+\|-\lambda\|(D^2u)^-\|\leq |\partial_tu|_{0;Q_1}+(\Lambda+\lambda)\|(D^2u)^+\|-\lambda\|D^2u\|
\]
This implies
\[
\|D^2u\|\leq \frac{\lambda+\Lambda}{\lambda}\|(D^2u)^+\|+\frac{1}{\lambda}|\partial_t u|_{0;Q_1}+\frac{|F(0)|}{\lambda}
\]
Moreover, we have
\[
\|(D^2u)^+\|=\sup_{e\in\R^n,|e|=1}(D^2ue\cdot e)^+,
\]
so
\[
\|D^2u\|\leq \frac{\lambda+\Lambda}{\lambda}\cdot\sup_{e\in\R^n,|e|=1}(D^2ue\cdot e)^++\frac{1}{\lambda}|\partial_t u|_{0;Q_1}+\frac{|F(0)|}{\lambda}.
\]
We now consider the case (ii) where $\|D^2F(D^2u)\|\leq \frac{\delta(n,\lambda)}{\|D^2u\|}$ when $\|D^2u\|\geq \nu$, $\nu>0$. We consider the function $h$ defined as
\[
h=\mu\sum_{e\in\Gamma}u_e^2+\chi\sum_{e\in\Gamma}(u_{ee}^+)^2,
\]
where
\[
\Gamma=\left\{e_1,...,e_n,\frac{1}{\sqrt{2}}(e_i\pm e_j),1\leq i<j\leq n\right\}.
\]
This can be done in view of Lemma 17.13 of \cite{GT}. We compute again $F_{ij}\partial_{ij}h-\partial_t h$ using now that
\[
\mathcal{P}u_e=0,\ \mathcal{P}u_{ee}=-F_{ij,kl}(D^2u)\partial_{ij}u_e\partial_{kl}u_e.
\]
\begin{align*}
\mathcal{P}h&=\sum_{e\in\Gamma}\left[2u_{ee}^2F_{ij}\partial_i\chi\partial_j\chi+2\chi u_{ee}^2\mathcal{P}\chi+8\chi u_{ee}F_{ij}\partial_i\chi\partial_j u_{ee}+2\chi^2F_{ij}\partial_j u_{ee}\partial_i u_{ee}
+2\chi^2u_{ee}\mathcal{P}u_{ee}\right]\\
&+2\mu\sum_{e\in\Gamma}\left(u_e\mathcal{P}u_e+ F_{ij}\partial_i u_e\partial_j u_e\right)\\
&\geq 2\mu\lambda \sum_{e\in\Gamma}|Du_e|^2+\sum_{e\in\Gamma}\left[-2\chi^2u_{ee}F_{ij,kl}\partial_{ij}u_e\partial_{kl}u_e+2u_{ee}^2F_{ij}\partial_i\chi\partial_j\chi\right.\\&\left.+2\chi u_{ee}^2\mathcal{P}\chi+8\chi u_{ee}F_{ij}\partial_i\chi\partial_j u_{ee}+2\chi^2F_{ij}\partial_j u_{ee}\partial_i u_{ee}  \right]\\
&=2\mu\lambda \sum_{e\in\Gamma}|Du_e|^2+\sum_{e\in\Gamma}\left[u_{ee}^2(2F_{ij}\partial_i\chi\partial_j\chi+2\chi \mathcal{P}\chi)\right]\\
&+\chi^2\sum_{e\in\Gamma}\left[2F_{ij}\partial_j u_{ee}\partial_i u_{ee}-2u_{ee}F_{ij,kl}(D^2u)\partial_{ij}u_e\partial_{kl}u_e\right]-8\Lambda\sum_{e\in\Gamma}\left(\eps u_{ee}^2|D\chi|^2+\frac{\chi^2|Du_{ee}|^2}{\epsilon}\right)\\
&\geq 2\mu\lambda \sum_{e\in\Gamma}|Du_e|^2+\sum_{e\in\Gamma}u_{ee}^2\left[2\lambda|D\chi|^2+2\chi \mathcal{P}\chi \right]\\
&+\chi^2\sum_{e\in\Gamma}\left[2\lambda|Du_{ee}|^2-2\delta\|D^2u_e\|^2\right]-8\Lambda\sum_{e\in\Gamma}\left(\epsilon u_{ee}^2|D\chi|^2+\frac{\chi^2|Du_{ee}|^2}{\epsilon}\right).
\end{align*}
We now choose 
\[
\epsilon=\frac{8\Lambda}{\lambda}
\]
so that the previous term becomes
\begin{align*}
2\mu\lambda \sum_{e\in\Gamma}|Du_e|^2&+\sum_{e\in\Gamma}u_{ee}^2\left[\left(2\lambda-\frac{64\Lambda^2}{\lambda}\right)|D\chi|^2-2\chi |\mathcal{P}\chi| \right]
+\chi^2\sum_{e\in\Gamma}\left[\lambda|Du_{ee}|^2-2\delta\|D^2u_e\|^2\right]\\
&\geq 2\mu\lambda \sum_{e\in\Gamma}|Du_e|^2+\sum_{e\in\Gamma}|Du_e|^2\left[\left(2\lambda-\frac{64\Lambda^2}{\lambda}\right)C-2C \right]\\
&+\chi^2\sum_{e\in\Gamma}\left[\lambda|Du_{ee}|^2-2\delta\|D^2u_e\|^2\right]\\
&= \sum_{e\in\Gamma}|Du_e|^2\left[2\mu\lambda+ \left(2\lambda-2-\frac{64\Lambda^2}{\lambda}\right)C\right]+\chi^2\sum_{e\in\Gamma}\left[\lambda|Du_{ee}|^2-2\delta\|D^2u_e\|^2\right].
\end{align*}
One then chooses $\mu$ large such that
\[
\mu\geq \frac{\left(\frac{64\Lambda^2-2\lambda+2}{\lambda}\right)C}{2\lambda}
\]
and $\delta$ small to conclude $\mathcal{P}h\geq0$ on the set where $\{x\in Q_\frac34:u_{ee}>\nu\}$. By the parabolic maximum principle, arguing as before, we obtain
\[
\sup_{A_\frac34}h\leq \mu n^2\sup_{Q_\frac34}|Du|^2+n^2\nu^2.
\]
Again, the full two-side estimate on the Hessian $|D^2u|_{0;Q_\frac12}$ follows using the uniform ellipticity and the estimate on $|\partial_t u|_{0;Q_1}$ along with the estimate for $\sup_{Q_\frac34}|Du|^2$ both in terms of $|u|_{0;Q_1}$, which are valid without concavity conditions. We then conclude
\[
|D^2u|_{0;Q_\frac12}\leq C(n,\lambda,\Lambda)(|u|_{0;Q_1}+\nu).
\]
\end{proof}
\begin{rem}
The previous results can be extended to operators $F=F(t,M)$ assuming only measurability in the time-variable, see \cite{Kovats}.
\end{rem}
\begin{rem}
When (i) holds with $\nu=0$, i.e. $F$ is concave, the full a priori estimate in $C^{2,1}$ by the Bernstein method is new. J. Kovats \cite{Kovats} proved a one-side bound when $F=F(t,M)$ is concave in $\mathcal{S}_n$ and measurable in $t$. In that case, it is sufficient to note that on the set where $u_{ee}\leq0$ we have $\mathcal{P}h\geq0$, so $\mathcal{P}h\geq0$ on the whole cylinder, where one can directly apply the maximum principle. 
\end{rem}
\begin{rem}
Examples of (stationary) operators that satisfy the previous assumptions (i)-(ii) or related conditions can be found in \cite{CaffarelliYuan,HuangAIHP}.
\end{rem}
\begin{rem}
A similar condition as (ii) has been used in Theorem 1.3 of \cite{HuangAIHP} to prove Sobolev regularity estimates of viscosity solutions. Moreover, the estimates in Theorem \ref{C11infinity} can be derived for less regular solutions in $C^{2,1}$. It remains an open problem, even in the stationary setting of \cite{CaffarelliYuan}, to prove a regularity estimate valid for viscosity solutions, along with an Evans-Krylov theorem, under the assumptions (i) and (ii) of Theorem \ref{C11infinity}.
\end{rem}
\begin{rem}
Parabolic $C^{1,1}$ estimates can be used to prove $W^{2,1}_p$ estimates for non-homogeneous equations on the line of Theorem \ref{czest}. This can be done by means of the results in \cite{Wang1}.
\end{rem}
In the concave/convex case the previous result combined with the Evans-Krylov estimate give the following weaker version of the $C^{2+\alpha,1+\alpha/2}$ estimate for fully nonlinear concave parabolic equations than \cite{Wang2}:
\begin{cor}\label{smoothconcave}
Let $F$ be concave and $F\in C^\infty$. Let $u\in C^{2,1}(Q_1)$ be a solution to $F(D^2u)-\partial_tu=0$ in $Q_1$. Then $u\in C^\infty$ and
\[
\|u\|_{C^{2+\alpha,1+\alpha/2}(Q_\frac12)}\leq C(|u|_{0;Q_1}+|F(0)|).
\]
\end{cor}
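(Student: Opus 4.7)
The plan is to chain two previously established ingredients: the Bernstein-type a priori $C^{2,1}$ estimate from Theorem \ref{C11infinity}, and the earlier Corollary (immediately after Proposition \ref{C2Cinfty}) giving $C^{2+\alpha,1+\alpha/2}$ control in terms of $\|u\|_{C^{2,1}}$. The concavity of $F$ is precisely the endpoint $\nu=0$ of hypothesis (i) in Theorem \ref{C11infinity}, so that case applies: on the whole cylinder the differential inequality $\mathcal{P}h\geq 0$ holds without restriction to a sublevel set, as already noted in the remark following that theorem.

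First, since $F\in C^\infty$ and $u\in C^{2,1}(Q_1)$, Proposition \ref{C2Cinfty} applied on any subcylinder compactly contained in $Q_1$ yields $u\in C^\infty(Q_1)$. In particular $u\in C^{4,2}(\overline{Q}_{7/8})$, so the regularity hypothesis of Theorem \ref{C11infinity} is met on $Q_{7/8}$ after a standard rescaling that reduces it to a unit cylinder (and preserves the uniform ellipticity constants). Invoking case (i) of that theorem with $\nu=0$ and using $\mathrm{osc}_{Q_{7/8}}u\leq 2|u|_{0;Q_1}$ yields
\[
\|u\|_{C^{2,1}(\overline{Q}_{3/4})}\leq C\bigl(|u|_{0;Q_1}+|F(0)|\bigr),
\]
where $C$ depends only on $n,\lambda,\Lambda$.

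Second, the $C^{2+\alpha,1+\alpha/2}$ Corollary preceding Corollary \ref{smoothconcave}, applied on $Q_{3/4}$ after a corresponding rescaling, gives
\[
\|u\|_{C^{2+\alpha,1+\alpha/2}(\overline{Q}_{1/2})}\leq C\bigl(n,\lambda,\Lambda,\|u\|_{C^{2,1}(Q_{3/4})},|F(0)|\bigr).
\]
Inserting the bound from the previous step eliminates the $C^{2,1}$ dependence and produces the claimed estimate, while the smoothness $u\in C^\infty(Q_1)$ was already established in the first step. No real obstacle arises: the argument is a concatenation of the Bernstein method and the Evans-Krylov-type bootstrap, the smoothness of $F$ being used both to upgrade $u$ from $C^{2,1}$ to $C^{4,2}$ and to justify the a priori computation in Theorem \ref{C11infinity}. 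The only point demanding any care is the use of intermediate cylinders and the attending rescaling to ensure that the constants in the two estimates combine cleanly, which is standard.
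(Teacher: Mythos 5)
Your proposal is correct and follows essentially the same route as the paper, which obtains the corollary by combining the Bernstein $C^{2,1}$ estimate of Theorem \ref{C11infinity} (case (i) with $\nu=0$) with the a priori Evans--Krylov estimate of the corollary following Proposition \ref{C2Cinfty}. The extra care you take with Proposition \ref{C2Cinfty} to first upgrade $u$ to $C^\infty$ (so that the $C^{4,2}$ hypothesis of Theorem \ref{C11infinity} is met on intermediate cylinders) is a detail the paper leaves implicit, and it is handled correctly.
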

Following \cite{ColdingMinicozzi}, given $\gamma>0$, we let $\mathcal{P}_\gamma(\R^n)$ be the space of ancient solutions to the fully nonlinear parabolic equation $F(D^2u)-\partial_t u=0$ in $Q_R=B_R\times[-R^2,0]$, so that there exists a constant $C_u$ such that
\[
\sup_{Q_R}|u|\leq C_u(1+R)^\gamma.
\]
Then, $\mathcal{H}_\gamma(\R^n)$ is the space of solutions to $F(D^2u)=0$ having growth at most $\gamma\geq0$.
This, together with the Bernstein bound for first derivatives, implies the following Liouville result for ancient solutions to parabolic equations under the assumption that $F$ is either concave, convex or close to a linear function near infinity. The result appears to be new even in the more classical concave/convex case.
\begin{cor}
Let $F$ be uniformly parabolic and $F(0)=0$. Assume that $F$ is either concave/convex or concave/convex/close to a linear function at infinity. If $\gamma<1$, then $\mathcal{P}_\gamma(\R^n)=\{\text{constant functions}\}$. If $\gamma<2$ we have $\mathcal{P}_\gamma(\R^n)=\mathcal{H}_\gamma(\R^n)$.
\end{cor}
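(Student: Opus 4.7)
The plan is to combine the parabolic Bernstein estimate of Theorem \ref{C11infinity} with a scaling argument, then bypass the threshold $\nu$ in the concave-at-infinity case via a linear parabolic Liouville theorem. Given $u \in \mathcal{P}_\gamma(\R^n)$ with $\gamma < 2$, for each $R > 1$ I would consider $v(x,t) := R^{-2} u(Rx, R^2 t)$, which solves the \emph{same} equation $F(D^2 v) - \partial_t v = 0$ on $Q_1$ with the same operator $F$ and the same threshold $\nu$. Applying Theorem \ref{C11infinity} to $v$ and unwinding the scaling, together with the growth bound $\mathrm{osc}_{Q_R} u \leq 2 C_u(1+R)^\gamma$, produces
\[
|D^2 u|_{0;Q_{R/2}} + |\partial_t u|_{0;Q_{R/2}} \leq C\left(R^{\gamma - 2} + \nu\right),
\]
so that in the limit $R \to \infty$ both $D^2 u$ and $\partial_t u$ are globally bounded on $\R^n \times (-\infty,0]$.

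In the fully concave/convex case one has $\nu = 0$, and the right-hand side already tends to $0$ as $R \to \infty$; hence $D^2 u \equiv 0$ and $\partial_t u \equiv 0$, so $u$ is affine in $x$ and constant in $t$, i.e.\ $u \in \mathcal{H}_\gamma$. In the ``at infinity'' case I would instead differentiate the equation in time (or, to avoid regularity issues, work with the time-incremental quotient $w_h(x,t) := (u(x,t+h) - u(x,t))/h$ as in the remark following Lemma \ref{Lue}). The function $w = \partial_t u$ (respectively $w_h$) solves a linear uniformly parabolic equation
\[
F_{ij}(D^2 u)\,\partial_{ij} w - \partial_t w = 0
\]
with bounded measurable coefficients (since $D^2 u$ is globally bounded and $F \in C^2$). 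Being a bounded ancient solution on all of $\R^n \times (-\infty, 0]$, the Krylov-Safonov Harnack inequality iterated on dyadic parabolic cylinders gives $\mathrm{osc}_{Q_r} w \leq \theta^k \mathrm{osc}_{Q_{2^k r}} w \leq 2 \theta^k |w|_{0;\R^n\times(-\infty,0]} \to 0$ for some $\theta \in (0,1)$, so $\partial_t u \equiv c$ for some constant $c \in \R$.

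Writing $u(x,t) = c t + v(x)$, the residual function $v$ solves the entire stationary equation $F(D^2 v) = c$. Evaluating the growth condition at $(0,-R^2) \in Q_R$ gives $|c| R^2 - |v(0)| \leq |u(0,-R^2)| \leq C_u(1+R)^\gamma$, which for $\gamma < 2$ forces $c = 0$. Therefore $u = v$ is time-independent and lies in $\mathcal{H}_\gamma(\R^n)$, and $\mathcal{P}_\gamma = \mathcal{H}_\gamma$ follows (the reverse inclusion is trivial). For the sharper conclusion when $\gamma < 1$, I would then apply the Krylov-Safonov $C^{1,\alpha}$ estimate (which requires only uniform ellipticity together with $F(0)=0$) to $v$; after rescaling via $v_R(x) := R^{-2} v(Rx)$ this yields $|Dv|_{0;B_{R/2}} \leq C R^{-1} |v|_{0;B_R} \leq C R^{\gamma - 1} \to 0$, so $Dv \equiv 0$ and $v$ is constant.

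The main obstacle is the additive $\nu$ term in the Bernstein estimate under the concave/convex-\emph{at-infinity} assumption: it blocks the direct ``rescale to zero'' argument that immediately settles the globally concave/convex case. The detour through the linear Liouville theorem for the equation satisfied by $\partial_t u$ is what closes the gap, since it needs only the global boundedness of $\partial_t u$, which is exactly what the Bernstein bound does deliver irrespective of whether $\nu$ vanishes.
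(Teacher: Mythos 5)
Your proposal is correct, and in the concave/convex-at-infinity case it takes a genuinely different route from the paper. The paper's own proof is a pure scaling argument: it rescales the Bernstein bound of Theorem \ref{C11infinity} and asserts $|Du|_{0;Q_{R/2}}\leq \frac{C}{R}(|u|_{0;Q_R}+\nu)$ and $\sup_{Q_{R/2}}|\partial_t u|\leq \frac{C}{R^2}(|u|_{0;Q_R}+\nu)$, then sends $R\to\infty$. You have correctly observed that the threshold $\nu$ does not improve under rescaling: with $v(x,t)=R^{-2}u(Rx,R^2t)$ the operator and hence $\nu$ are unchanged, so the honest bound is $|D^2u|_{0;Q_{R/2}}+|\partial_t u|_{0;Q_{R/2}}\leq C(R^{\gamma-2}+\nu)$ (and with the paper's normalization $\tilde u(x,t)=u(Rx,R^2t)$ the threshold for $\tilde F(M)=R^2F(M/R^2)$ becomes $R^2\nu$, which gives the same conclusion). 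This only yields $D^2u,\partial_t u\in L^\infty$ when $\nu>0$, so the paper's ``send $R\to\infty$'' step really settles only the globally concave/convex case $\nu=0$. Your detour — global boundedness of $\partial_t u$, the Krylov--Safonov oscillation-decay Liouville theorem for the linear uniformly parabolic equation satisfied by $\partial_t u$ (or by its incremental quotients), the algebraic step $u=ct+v(x)$ with $c=0$ forced by subquadratic growth, and finally the concavity-free $C^{1,\alpha}$ gradient estimate for $F(D^2v)=0$ when $\gamma<1$ — closes this gap cleanly and each ingredient is available in the paper's framework. What the paper's approach buys is brevity (two lines of scaling when $\nu=0$); what yours buys is an argument that actually survives the additive $\nu$, at the cost of invoking an auxiliary Liouville theorem for linear parabolic equations with measurable coefficients. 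The only caveats are the ones the paper itself glosses over: the Bernstein estimate is stated for $u\in C^{4,2}$ and $F\in C^2$, so either assume this regularity for members of $\mathcal{P}_\gamma(\R^n)$ or work throughout with the incremental quotients you mention.
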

\begin{proof}
Consider first the case $\gamma<1$. It is sufficient to rescale the estimate in Theorem \ref{C11infinity} for equations defined in $Q_R$ and obtain
\[
|Du|_{0;Q_{R/2}}\leq \frac{C}{R}(|u|_{0;Q_R}+\nu).
\]
Indeed, if $u$ solves $F(D^2u)-\partial_t u=0$ in $Q_R$, then $\tilde u(x,t)=u(Rx,R^2t)$ solves
\[
\tilde F(D^2\tilde u)-\partial_t\tilde u=0\text{ in }Q_1,\ \tilde F(M)=R^2F\left(\frac{1}{R^2}M\right),
\]
where $\tilde F$ is uniformly elliptic with the same ellipticity constants of $F$. This implies that
\[
R|Du|_{0;Q_{R/2}}=|Du|_{0;Q_{1/2}}\leq C(|u|_{0;Q_1}+\nu)=C(|u|_{0;Q_R}+\nu).
\]
The conclusion follows by sending $R\to\infty$ in the gradient bound, using the growth condition on $u$ to see that $u$ must be constant in space. Therefore $D^2u(x,t)=0$ and, using the equation, $\partial_t u=F(D^2u)=F(0)=0$, so $u$ must also be constant in time.\\
To prove the second statement, it is enough to exploit the time-derivative bound
\[
\sup_{Q_{R/2}}|\partial_tu|\leq \frac{C}{R^2}\left(|u|_{0;Q_R}+\nu\right).
\]
Therefore, by sending $R\to\infty$, it follows that $u$ is constant in time and the thesis follows.
\end{proof}

\appendix
\section{Regularity estimates for viscosity solutions of fully nonlinear elliptic equations in two variables}

The main aim of this section is to prove the following extension of Nirenberg's result \cite{Nirenberg} for viscosity solutions. The statement is taken from Theorem 2.4 in \cite{SilvestreAnnali}, see also Theorem 6.2 in \cite{SilvestreNotes} and Theorems 4.9 and 4.26 in \cite{FRRO}:
\begin{thm}\label{ell2D}
Let $u:B_1\to\R$, $B_1\subset\R^2$, and $u$ be a continuous viscosity solution to
\[
F(D^2u)=f(x)\text{ in }B_1.
\] 
Assume that $F:\mathcal{S}_2\to\R$ is uniformly elliptic (no other assumptions are required) and $f\in C^\alpha(B_1)$. Then for some small $\alpha\in(0,1)$ we have the regularity estimate
\[
\|u\|_{C^{2,\alpha}(B_\frac12)}\leq C(|u|_{0;B_1}+\|f\|_{C^\alpha(B_1)}),
\]
where $\alpha$ and $C$ depend on $\lambda,\Lambda$.
\end{thm}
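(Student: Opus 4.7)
The plan is to adapt L.~Nirenberg's classical two-dimensional argument \cite{Nirenberg} to viscosity solutions, using three ingredients: Caffarelli's perturbation scheme, sup/inf-convolution regularization, and the two-dimensional algebraic identity that produces a divergence-form equation for pure second derivatives. By the perturbation result of Theorem 8.1 in \cite{CC}, it is enough to prove the a priori estimate $\|u\|_{C^{2,\bar\alpha}(\overline{B}_{1/2})} \le C\,|u|_{0;B_1}$ for continuous viscosity solutions of the homogeneous equation $F(D^2u) = 0$ in $B_1 \subset \R^2$, for some universal $\bar\alpha \in (0,1)$; the H\"older dependence on $f$ is then recovered automatically. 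Since viscosity solutions of uniformly elliptic equations are a priori only $C^{1,\alpha}$ \cite{C91,Trudinger88}, pointwise differentiation of the equation is not directly justified; I would therefore regularize $u$ via the double convolution $(u^\varepsilon)_\delta$, which is $C^{1,1}$ and still a viscosity solution of the same equation on a slightly smaller ball for $\delta \ll \varepsilon$, cf.\ Proposition 4.6 of \cite{CKSS}, and pass to the limit by local $C^{1,\alpha}$ compactness.

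The heart of the proof is the two-dimensional trick. Writing $w = u_{11}$ and $q = u_{12}$, uniform ellipticity gives $F_{22}(D^2 u) \ge \lambda > 0$, so the implicit function theorem applied to $F(u_{11}, u_{12}, u_{22}) = 0$ yields $u_{22} = G(u_{11}, u_{12})$ with $G$ uniformly Lipschitz, $G_a = -F_{11}/F_{22} \le -\lambda/\Lambda$ and $|G_b| = 2|F_{12}|/F_{22} \le 2\Lambda/\lambda$. Differentiating this identity in $x_1$ and using the compatibility relations $u_{221} = u_{122} = \partial_2 u_{12}$ and $u_{121} = u_{112} = \partial_2 u_{11}$ yields
\[
\partial_2 q = G_a(x)\,\partial_1 w + G_b(x)\,\partial_2 w, \qquad \partial_1 q = \partial_2 w,
\]
valid in the distributional sense on each $C^{1,1}$ regularization. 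Differentiating the first equality in $x_1$, the second in $x_2$, and equating $\partial_1\partial_2 q = \partial_2\partial_1 q$ then produces the divergence-form equation
\[
\partial_1\!\bigl(-G_a(x)\,\partial_1 w - G_b(x)\,\partial_2 w\bigr) + \partial_2\!\bigl(\partial_2 w\bigr) = 0 \quad \text{in } B_{3/4}.
\]
The symmetric part of the coefficient matrix is $\begin{pmatrix}-G_a & -G_b/2 \\ -G_b/2 & 1\end{pmatrix}$, which is uniformly elliptic with constants depending only on $\lambda, \Lambda$, since
\[
(-G_a)\cdot 1 - (G_b/2)^2 = \frac{F_{11}F_{22} - F_{12}^2}{F_{22}^2} \ge \frac{\lambda^2}{\Lambda^2}
\]
by the uniform ellipticity of the Hessian of $F$.

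Given this divergence-form equation with bounded measurable coefficients, the De Giorgi--Nash--Moser theorem yields $w = u_{11} \in C^{\bar\alpha}(\overline{B}_{1/2})$ with the quantitative bound $\|u_{11}\|_{C^{\bar\alpha}(\overline{B}_{1/2})} \le C\,|u|_{0;B_1}$. Repeating the same argument with $e_1$ replaced by $e_2$ and by $(e_1 + e_2)/\sqrt{2}$, and polarizing, yields $C^{\bar\alpha}$ control on every component of $D^2 u$; combined with the passage to the limit in the regularization and Caffarelli's perturbation, this establishes the theorem. The main obstacle is the extraction of the divergence-form equation above: it relies essentially on the fact that $\mathcal{S}_2$ has dimension three while exactly two independent compatibility identities for third derivatives of $u$ are available, so that the system closes in a manner that has no analogue in higher dimensions. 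Making this algebraic manipulation rigorous for continuous viscosity solutions, rather than for classical ones, is the very reason for the $C^{1,1}$ regularization step.
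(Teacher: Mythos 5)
Your overall architecture (reduce to the homogeneous case via Caffarelli's perturbation, then run Nirenberg's two-dimensional divergence-form trick and De Giorgi--Nash) is the right one, and the algebra of the 2D reduction is correct. The gap is in the bridge from classical to viscosity solutions: you claim that the double convolution $(u^{\eps})_\delta$ is $C^{1,1}$ \emph{and still a viscosity solution of the same equation}. This is false. Sup-convolution preserves the subsolution property but destroys the supersolution property (and symmetrically for inf-convolution); Proposition 4.6 of \cite{CKSS}, which is what this paper invokes elsewhere (Theorem \ref{muuv}), only asserts that $(u^{\eps+\delta})_\delta$ is a $C^{1,1}$ \emph{subsolution}. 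A concrete counterexample: for $u=x_1^2-x_2^2$ harmonic in $\R^2$, one computes $u^{\eps}=\tfrac{x_1^2}{1-2\eps}-\tfrac{x_2^2}{1+2\eps}$, which is strictly subharmonic. Since your regularized function only satisfies $F(D^2v)\geq 0$ rather than $F(D^2v)=0$, you cannot solve $v_{22}=G(v_{11},v_{12})$, and the entire divergence-form computation collapses. A secondary problem, even granting a genuine $C^{1,1}$ solution: $v_{11},v_{12}\in L^\infty$ only, so their distributional derivatives are not functions, and the product $G_a(x)\,\partial_1 w$ with merely measurable $G_a$ and a distributional $\partial_1 w$ is not defined; the chain rule $\partial_2 G(v_{11},v_{12})=G_a\partial_2 v_{11}+G_b\partial_2 v_{12}$ is likewise unjustified at this regularity. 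The Nirenberg computation really needs $C^3$ (or smooth) solutions.

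The paper sidesteps both issues by a different transfer mechanism: one first has the interior a priori $C^{2,\alpha}$ estimate for \emph{classical} solutions (Theorem 4.9 of \cite{FRRO}, going back to \cite{Nirenberg}), then uses the method of continuity (Theorems 17.10 and 17.12 in \cite{GT}) to produce a classical $C^{2,\alpha}$ solution of the Dirichlet problem with the viscosity solution's own boundary data, and finally identifies this classical solution with the given viscosity solution by the comparison/uniqueness principle (Corollary 5.4 in \cite{CC}). The estimate for $u$ is then inherited from the classical solution, and the inhomogeneous case follows by Theorem 8.1 of \cite{CC} exactly as you propose. If you want to keep a regularization-based proof, you would have to approximate the \emph{operator} (e.g.\ by smooth uniformly elliptic $F_k\to F$), solve the approximate Dirichlet problems classically, and pass to the limit using stability of viscosity solutions plus uniqueness --- which is, in substance, the existence-plus-uniqueness route again.
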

\begin{proof}
We start with the homogeneous case $f=0$. In this setting the proof makes use of the a priori $C^{2,\alpha}$ estimate in Theorem 4.9 of \cite{FRRO} up to the boundary \cite{GT}, and the method of continuity to show the existence of $C^{2,\alpha}$ solutions for the Dirichlet problem
\[
\begin{cases}
F(D^2u)=0&\text{ in }B_1,\\
u=g&\text{ on }\partial B_1,
\end{cases}
\]
with any given boundary datum $g\in C(\partial B_1)$. These have been achieved for instance in Section 17.3 of \cite{GT}, Theorems 17.10 and 17.12. Note that having $F:\mathcal{S}_2\to\R$ uniformly elliptic is enough since the uniform ellipticity implies the increasing monotonicity and the Lipschitz continuity in the matrix entry, as pointed out in Theorem 4.9 of \cite{FRRO}. Once the existence of a smooth solution is established, one can conclude that the desired regularity estimate holds for viscosity solutions by means of the uniqueness of solutions, see e.g. Corollary 5.4 in \cite{CC}.\\
To conclude, the estimate in the non-homogeneous case follows by the perturbation argument in Theorem 8.1 of \cite{CC}, which makes use of the regularity estimate for the homogeneous problem.
\end{proof}



\end{document}